\theoremstyle{plain}
\newtheorem{thm}{Theorem}[section]
\newtheorem{lem}[thm]{Lemma}
\newtheorem{prop}[thm]{Proposition}
\newtheorem{cor}[thm]{Corollary}
\newtheorem*{metatheorem}{Metatheorem}
\theoremstyle{definition}
\newtheorem{rem}[thm]{Remark}
\newtheorem{defn}[thm]{Definition}
\newtheorem{ass}[thm]{Assumption}
\newtheorem{exa}[thm]{Example}
\newcommand{\R}{\mathbf{R}} 
\newcommand{\C}{\mathbf{C}} 
\newcommand{\T}{\mathbf{T}} 
\newcommand{\Z}{\mathbf{Z}}
\newcommand{\N}{\mathbf{N}} 
\newcommand{\B}{\mathbb{B}} 
\newcommand{\X}{\mathbf{X}}
\newcommand{\XX}{\mathbb{X}}
\newcommand{\OO}{\mathcal{O}}
\newcommand{\A}{\mathcal{A}} 
\newcommand{\MM}{\mathcal{M}}
\newcommand{\LL}{\mathcal{L}}
\newcommand{\CC}{\mathcal{C}}
\newcommand{\cC}{\mathscr{C}}
\newcommand{\DD}{\mathcal{D}}
\newcommand{\PP}{\pi} 
\newcommand{\dd}{\delta}
\newcommand{\ddh}{\ensuremath{{\delta^{S}}}}
\newcommand{\<}{\langle}
\renewcommand{\>}{\rangle}
\newcommand{\HH}{\mathcal{H}}
\newcommand{\BB}{\mathcal{B}}
\newcommand{\id}{\mathrm{id}}
\newcommand{\E}{\ensuremath{{\mathcal E}}}
\newcommand{\ZZ}{\ensuremath{{\mathcal Z}}}
\newcommand{\I}{\ensuremath{{\mathscr I}}}
\newcommand{\Lip}{\mathrm{Lip}}
\DeclareMathOperator{\Div}{div}
\newlist{todolist}{itemize}{2}
\setlist[todolist]{label=$\square$}
\begin{document}
	
	\date{\today}
	\title{Non-autonomous rough semilinear PDEs and the multiplicative Sewing Lemma}
	\author[1]{Andris Gerasimovi\v cs}
	\author[2]{Antoine Hocquet}
	\author[3]{Torstein Nilssen}
	
	\affil[1]{\small University of Bath, Bath, United Kingdom; 
	
	Email: \texttt{ag2616@bath.ac.uk} }
	\affil[2]{\small Technische Universit\"at Berlin, Berlin, Germany;
	
	Email: \texttt{antoine.hocquet86@gmail.com}}
	
	\affil[3]{\small University of Agder, Kristiansand, Norway;
	
	Email: \texttt{torstein.nilssen@uia.no}}

	\maketitle

\begin{abstract}
We investigate existence, uniqueness and regularity for local solutions of rough parabolic equations with subcritical noise of the form
$du_t- L_tu_tdt= N(u_t)dt + \sum_{i = 1}^dF_i(u_t)d\X^i_t$
where $(L_t)_{t\in[0,T]}$ is a time-dependent family of unbounded operators acting on some scale of 
Banach spaces, while $\X\equiv(X,\XX)$ is a two-step (non-necessarily geometric) rough path of 
H\"older regularity $\gamma >1/3.$
Besides dealing with non-autonomous evolution equations, our results also allow for unbounded operations 
in the noise term (up to some critical loss of regularity depending on that of the rough path $\X$). As 
a technical tool, we introduce a version of the multiplicative sewing lemma, which allows to construct the so 
called product integrals in infinite dimensions. We later use it to construct a semigroup analogue for the 
non-autonomous linear PDEs as well as show how to deduce the semigroup version of the usual sewing 
lemma from it.\\[.5em]

\textbf{Keywords:} Rough path, rough partial differential equations, semilinear equations, multiplicative Sewing Lemma, propagator.

{\small \textbf{Mathematics Subject Classification (2010):} 60L50, 60H15, 35K58, 32A70}

\end{abstract}
	
	\tableofcontents
	
	\section{Introduction}
	\subsection{Motivations}
	
	We are interested in non-autonomous semilinear evolution equations of the form
	\begin{equation}\label{evolution_equation}
	\begin{cases}
	du_t= (L_tu_t + N(u_t))dt + \sum\limits_{i =1}^d F_i(u_t)d\X^i_t,\quad t\in[0,T],
\\
u_0  \, \; = x\in \BB,
	\end{cases}
	\end{equation} 
	where the unknown $u$ is continuous with values in some Banach space $\BB$, $(L_t)_{t\in[0,T]}$ is a continuous in time family of unbounded operators satisfying a suitable sector condition, $X\colon[0,T]\to \R^d$ is a path of H\"older regularity $\gamma >1/3,$ while $N$ and $F$ denote some non-linearities. 
	
	Parabolic equations like \eqref{evolution_equation} appear in a stochastic context (e.g.\ in filtering theory), 
	where $X$ denotes for instance a finite-dimensional Brownian motion. They were first investigated in 
	the late 70's through the work of Pardoux, Krylov and Rozovskii 
	\cite{krylov1981stochastic,pardoux1980stochastic}, using an appropriate functional setting in which It\^o 
	calculus can be used, together with monotonicity arguments. In the autonomous case, the so called `mild 
	approach' was largely developed by Da Prato and his school, which culminated in the monograph~\cite{DPZ}. 
	
	In a `rough path' context, the treatment of equations similar to \eqref{evolution_equation} originates in the mild approach by Gubinelli, Lejay and Tindel \cite{GLT2006} for Young paths, later extended to rough paths by 
Gubinelli and Tindel in \cite{gubinelli2010} (see also the works of Deya 
\cite{deya2012numerical,deya2011rough,deya2012rough}). Parallel to that, a viscosity formulation following 
ideas from Lions and Souganidis \cite{lions1998fully,lions1998fully4} was proposed by Caruana, Friz, 
Oberhauser \cite{caruana2011rough,friz2014rough}.
Recently, a variational approach to evolutionary rough PDEs with transport noise was introduced by 
Gubinelli, Deya, Hofmanov\'a and Tindel \cite{DGHT} (see also Gubinelli and Bailleul \cite{BG}). Modelled on 
Sobolev spaces, their notion of solution is `intrinsic', in the sense that they work directly at the level of a 
rough evolution equation, avoiding the use of flow transforms.
The mild approach gained some renewed interest in \cite{gerasimovics2018}, where it was used to prove a 
H\"ormander type theorem for degenerate SPDEs.

	When $X=W$ is a Brownian Motion in $\R^d$ (the case of a Wiener Process defined on some abstract Wiener space could be carried out by letting $d=\infty$ in the sum below), and $L_t$ is a deterministic family of operators, one defines a mild solution by the Duhamel Formula
	\begin{equation}
	\label{mild_rep}
	u_t-S_{t,0}x = \int _{0}^tS_{t,r}N(u_r)dr+ \sum_{i=1}^d \int _0^t S_{t,r}F_i(u_r)dW_r^i,
	\quad t\in[0,T],
	\end{equation}
where the stochastic integral is understood in the It\^o sense, $S_{t,s} \in \LL(\BB,\BB)$ for $0\leq s\leq t\leq T$ is the propagator associated to $(L_t)$, that is, $S_{t,s}x$ denotes the solution $v$ evaluated at time $t$ (if it exists and is unique), of the linear equation
\begin{equation}
\label{linear}
\partial_t v = L_t v_t\quad \text{on}\enskip (s,T],
\quad
v_s:=x\in \BB\,.
\end{equation}
As natural as it may look, the formula \eqref{mild_rep} is meaningful only when $L_t$ is a deterministic family.
Even assuming adaptedness of $L_t$ does not guarantee that formula \eqref{mild_rep} is meaningful, since
in that case the stochastic integrand $S_{t,r}F_i(u_r)$ is only $\mathcal F_t$-measurable (and not $\mathcal F_r$-measurable, as required in standard It\^o calculus).
This anticipative nature causes major technical difficulties in stochastic contexts where a 
representation like \eqref{mild_rep} is needed. The situation of random $L_t$ is even more problematic since a solution constructed using Skorohod integral (which would be a natural candidate) is not, generally speaking, a weak solution of \eqref{evolution_equation} (see \cite{leon1990equivalence}). A solution was nonetheless provided by Leon and Nualart in \cite{leon1998stochastic}, relying on the formalism of `forward integral' introduced earlier by Russo and Vallois in \cite{russo1993forward}. Despite being quite successful in the non-autonomous, semilinear scenario, 
their approach does not seem to easily extend to broader contexts such as quasilinear equations.

The situation when the operator $L_t$ is random arises naturally, for instance in the context of non-degenerate quasilinear SPDEs of the form
\begin{equation}\label{quasilinear_ansatz}
	du_t= A(u_t)u_t dt + \sigma(u_t)dW_t,\quad t\in(0,T],
	\quad u_0\in\BB\,,
\end{equation}
where to prove existence and uniqueness one is led to investigate a fixed point for the map 
\begin{equation*}
	v=(v_t(\omega,x))\, \longmapsto \left (t\mapsto \,S^{v}_{t,0}u_0 + \int_0^t S^{v}_{t,s}\sigma(u_r)dW_s \right )\,.
\end{equation*}
Here $S^{v}_{t,s}$ denotes the random propagator generated by the elliptic part $L_t=A(v_t)$, and is therefore only $\mathcal F_t$-measurable.
In a deterministic setting, this functional analytic approach is due to Amann 
\cite{amann1986quasilinear}.
Based on an alternative (non-anticipative) formulation for \eqref{quasilinear_ansatz}, we note that similar quasilinear stochastic equations were investigated in the recent work \cite{kuehn2018pathwise}, though it remains unclear how to translate Amann's method to a stochastic context.

Another motivation for considering random generators lies in the study of infinite-dimensional Kolmogorov equations.
A concrete example is given by the weak error analysis for the following SPDE
\begin{equation}\label{general_spde}
dX_t^x= AX_t^x dt + \sigma(X_t^x),\quad X_0^x=x\in H=L^2(0,1)\,,
\end{equation}
seen as an evolution equation in the Hilbert space $H$, where the superscript in $X^x$ emphasizes the dependency of the solution w.r.t.\ the initial datum. Here $A$ is a uniformly elliptic operator (for instance the Dirichlet Laplacian) and $\sigma$ denotes a Lipschitz continuous Nemytskii operator.
In \cite{brehier2018kolmogorov} the authors use that the directional derivative $\langle Du_t^x ,h\rangle$ of the functional $u(t,x)=\mathbb E[\varphi(X_t^x )]$ for any bounded and continuously differentiable $\varphi \colon H \to \R$, is given formally for each $h\in H$ by the anticipative integral
\begin{equation}\label{formula_brehier}
\langle Du_t^x ,h\rangle =\int _0^t\langle S_{t,s}\sigma'(X_t^x ),e^{sA}h dW_s\rangle
\end{equation}
where this time $S_{t,s}$ is the propagator associated with the formal generator `$L_t:=A+\sigma(X_t)\frac{dW_t}{dt}$'.
Otherwise said, $S_{t,s}\colon H\to H$ is the solution map $z\mapsto Z^s_t$
for the linear equation
\[
dZ^s_{t}=AZ^s_{t}dt +\sigma'(X_t^x )Z^s_tdW_t,\quad \text{on}\enskip (s,t]\,,
\quad Z^s_s=z\,.
\]
The representation \eqref{formula_brehier} is a crucial tool in obtaining estimates on the solutions of Kolmogorov equations (see \cite[Sec.~5]{brehier2018kolmogorov}).
These estimates are then needed to address the weak order analysis of an explicit Euler scheme for \eqref{general_spde}.
We remark that the authors of the previous paper cannot use \eqref{formula_brehier} and have to hinge their analysis on a discretized version of \eqref{general_spde}. Though it is not our intention here to discuss the meaning of the ill-defined product in the corresponding formal generator, we note that the tools developed in Section \ref{sec:controlled} allow us to make perfect sense of the integral \eqref{formula_brehier}, and to obtain estimates for it.

Our main objective in this paper is to build quite a broad framework for a pathwise mild solution theory 
to semilinear SPDEs of the form \eqref{evolution_equation}. We do this through the theory of rough paths 
introduced by Lyons \cite{lions1998fully}, in a spirit that is similar to Gubinelli's controlled rough path 
approach \cite{gubinelli2004}. In keeping with this view, we will look for solutions $u$ 
of~\eqref{evolution_equation} that `locally look like $X$'. Loosely speaking, $(u, u')$ will be said to be 
controlled by $X$ if 
\begin{align*}
	u_t - u_s = u'_s \cdot (X_t-X_s) + R_{t,s},
\end{align*}
where $R_{t,s} = O(|t-s|^{2\gamma})$ and $\gamma > 1/3$ is the H\"older regularity of $X$. The main 
difference with~\cite{gubinelli2004} is that here the above control happens not in the space $\BB$ where the 
initial datum lies, but in some larger space. Indeed, as seen from \eqref{mild_rep} one cannot expect $u_t$ to 
be H\"older continuous in that space, since even the solution to the corresponding linear equation is not. 
One could potentially work in spaces with a weight at time $0$ (see \cite{hesse2019local} for the attempt in 
that direction, though authors can only consider $F_i$ that improves spatial regularity), but going to 
the larger space allows sticking very closely to the classical theory of controlled rough paths. We nevertheless 
point out that, using the smoothing properties of the propagator $S_{t,s},$ we will show that the solution 
$u$ is indeed continuous in time with values in the original space $\BB$.

One of the main advantages of the pathwise approach as it was numerously shown in finite dimensions, is 
that it allows to show that the solution depends continuously on the noise and the initial condition. This is in 
strong contrast to the approach using It\^o calculus, where in general only measurability of the solution map 
is available. Another advantage is that the notion of the rough integral that we will use is deterministic in its 
nature and therefore does not rely on the adaptedness of the integrands. This, in fact, allows to treat 
equation~\eqref{evolution_equation} with random $L_t$ in a less technical way than the one of Leon and 
Nualart. We will also show that in the case of adapted $L_t,$ our mild solutions are also weak 
It\^o solutions when $X$ is the Wiener process.
Finally, the rough path approach allows to consider equation~\eqref{evolution_equation} with a Gaussian 
noise which is not a martingale, in particular allowing to treat the case of a fractional Brownian motion with 
Hurst parameter less than $1/2$.

Our main result in this part is to state sufficient conditions on $N$ and $F$ under which existence, 
uniqueness and continuity of the solution map holds for \eqref{evolution_equation}. It can be loosely 
formulated as follows
(precise statements will be given in Theorems \ref{thm:main}, \ref{thm:weak}, \ref{RPDE}, \ref{stab_sol} and \ref{weak}).
\begin{metatheorem}
Let $(t \mapsto L_t)$ be a H\"older continuous, sectorial family of linear operators acting on a scale of Banach 
spaces $(\BB_\alpha )_{\alpha \in \R},$ in such a way that $L_t:\BB_\alpha \to \BB_{\alpha -1}$ for any $t$ and $\alpha.$
Fix some initial datum $x\in \BB_0.$ Consider nonlinearities $N$ and $F$ such that $N$ is polynomial, 
while $F$ is three times continuously differentiable and subcritical.
There exists a unique, local in time, mild solution to \eqref{evolution_equation} such that $u$ is controlled by $X$ with 
Gubinelli derivative $F(u).$ The solution map depends continuously on the initial condition, as well as on the 
rough path $\X.$ It is also a weak solution in the usual sense.
\end{metatheorem}

Roughly speaking, the above `subcriticality' assumption means that $F,$ seen as a non-linear operator acting 
on the scale $(\BB_\alpha )$ does not cause any loss of regularity greater than the H\"older 
time-regularity of the rough path $\X$. This condition illustrates the subtle interplay between space and time 
regularity that occurs in problems of the form \eqref{evolution_equation}.
An elementary but illustrative example that fits within this solvability theory is the linear equation
\begin{equation}\label{subcritical_example}
du_t=\Delta u_t dt + (-\Delta)^{\sigma }u_t d\X_t,\quad x\in \BB_0:= L^p(\R^n),\quad p\in(1,\infty)\,,
\end{equation}
under the subcritical assumption $\sigma <\gamma,$ where $(\BB_\alpha)$ is the Bessel potential scale (see 
below). In the case of $X$ being a Brownian motion, we see that $\sigma $ can be arbitrary close to $1/2,$ 
which in strength agrees to the well-known similar results obtained using mild formulation and It\^o calculus 
(see for instance \cite{brzezniak2012stochastic} and the references therein).
Note that, in the `super-critical' case $\gamma<1/2=\sigma ,$ there is at least one type of equations that still 
possess a unique solution, which consists in transport noise of the form $F(u_t)=\sigma\cdot \nabla u_t.$ As 
observed by Deya, Gubinelli, and Tindel \cite{DGHT} (see also \cite{HH}), it is possible in this case to prove a 
priori estimates, which in turn allow inferring existence and uniqueness of solutions. As pointed out in 
\cite[Remark 2.4]{HH}, in this case, the assumption that the rough path $X$ be \emph{geometric} is essential, 
in contrast with the subcritical case dealt with in the present work. 
This limitation constitutes another motivation for us to introduce an alternative formulation.
\\

A secondary objective of our manuscript is to introduce a new version of the multiplicative Sewing Lemma, 
which allows constructing product integrals in a rather general class of metric spaces $(\MM, d)$
equipped with a product operation. Product integrals are going to be the limits of the form 
\begin{equation}\label{product integral}
\varphi_{t,s} = \lim_{|\pi| \to 0} \prod_{[v,u] \in \pi} \mu_{v,u}\;,
\end{equation}
where the limit is taken over arbitrary partitions $\pi$ of $[s,t]$ with their mesh size going to zero. The 
novelty of our version is that it will apply to various cases where the limit lives in an infinite dimensional 
space. The main difference from the usual additive sewing lemma is that the product here is 
non-commutative in general. Note that a version of multiplicative sewing lemma for the non-commutative 
products has already been introduced by Feyel, De La Pradelle and Mokobodzki \cite{feyel2008non} (see also \cite{coutin2014perturbed,brault2019nonlinear,bailleul2019rough} for related works). We 
point out, however, that Theorem \ref{thm:MSL} below is, in essence, independent of the latter, since the 
assumptions and conclusions are different. In \cite{feyel2008non}, the authors have to assume that the 
function $|\cdot|:\MM\to\R_+$ which controls the `size' of $\mu_{t,s}$ in~\eqref{product integral}, is 
Lipschitz continuous with respect to the distance $d$. This is mostly not going to be the case for the 
examples treated below, since in infinite dimensions $|\cdot|$ is often going to be a norm which is stronger 
than $d$. For a further discussion, see Remark \ref{rem:feyel}.

We will show how to use this version of the Sewing lemma to give a precise meaning to rough equations of 
the form \eqref{mild_rep}. Though this particular step was already achieved in the previous works 
\cite{gubinelli2010,deya2011rough,gerasimovics2018}, the merit of our approach is that our integrands belong 
to a class of paths which are independent of the propagator $(S_{t,s})_{(s,t)\in \Delta_2}$ generated by 
$(L_t)_{t\in[0,T]}$. This paves the way to a possible treatment of quasilinear equations where the propagator 
will depend on the solution itself (we will address this problem in future work). It also has a potential 
attractive application to unify Lyons' theory of multiplicative functionals with that of Gubinelli and Tindel, for 
the construction of the rough convolutions 
\begin{equation}
\label{rough_convolution}
z_t:=\int _0^tS_{t,r}F(y_r)\cdot d\X_r\,.
\end{equation} 
A basic role in this construction is played by the \emph{affine group} $\MM,$ defined as the semi-direct product 
\begin{equation}
\label{semi-direct}
\MM:=\mathcal G\ltimes \BB
\end{equation} 
where $\mathcal G\subset \LL(\BB)$ is a group of linear bijections operating on $\BB$ via the natural action $(T,b)\mapsto Tb$.
More precisely, $\MM$ is the set of pairs $(T,b)\in\mathcal G\times \BB$ endowed with the group multiplication
\begin{equation}
\label{semi-direct-product}
(T_1,b_1)\circ (T_2,b_2):= \big(T_1  T_2, b_1 + T_1b_2\big)\,,\quad (T_j,b_j)\in \MM\,,\enskip j=1,2\,.
\end{equation}
In practice, because of the parabolic nature of \eqref{evolution_equation}, it will be necessary to replace $\mathcal G$ by an appropriate set of (non-necessarily invertible) linear operators containing $S_{t,s}$ for $0 \leq s \leq t \leq T$. In this case $\MM$ is only a monoid, but this is sufficient for our purposes.

Surprisingly enough, our version of the multiplicative Sewing Lemma (i.e.\ Theorem \ref{thm:MSL}), is seen to be useful for purposes that are orthogonal to rough paths theory and the construction of $z_t$. In particular, it will allow us to construct the propagators $S_{t,s}$ `by hand' (hence reproving classical results from Kato, Tanabe, and Sobolevskii), or to show a version of the Lie-Trotter product formula, for propagators which are generated by the sum of two dissipative operators. Though these results are well-known in principle, the observation that they could be deduced from a Sewing Lemma perspective is new and could be seen as one of our main contributions. Note that, apart from the new construction of the rough convolution, this first part of the paper is essentially independent of the second.\\

The paper is organized as follows.
In Section \ref{sec:results} we explain our functional analytic setting and present our main results. In particular, we will give an existence and uniqueness statement for \eqref{evolution_equation}, and provide a stochastic example with random coefficients.
In Section \ref{sec:MSL}, we prove a general multiplicative sewing lemma and give some applications in the context of (not necessarily rough) evolution equations.
Though we believe that these applications are interesting by themselves, they merely consist of establishing new proofs of already known results in functional analysis, and therefore their reading could be avoided at first. In Section \ref{sec:controlled}, we introduce the space of controlled paths $\DD_{X,\alpha}^{2\gamma}$ associated to a monotone family $(\BB_\alpha)_{\alpha\in\R}$ of interpolation spaces, and then apply the multiplicative Sewing Lemma in order to construct the `rough convolution' $\int_0^t S_{t,s}y_s \cdot d\X_s,$ where $S$ is the propagator associated to the family $(L_t)_{t\in[0,T]}$ (its existence will be guaranteed by Assumption \ref{ass:L_t}). Similar to \cite{gubinelli2004,gubinelli2010}, it will be seen that $\DD_{X,\alpha}^{2\gamma}$ is a natural space of integrands for which the rough convolution is well-defined. We point out that, though the construction of rough convolutions was already carried out in \cite{gubinelli2010} (see also \cite{gerasimovics2018}), it does require a proof because our definition of the controlled path space is different from the above-mentioned works.
As a natural continuation, Section \ref{sec:evolution} will be devoted to the proof of Theorem \ref{thm:main} where existence, uniqueness, and stability of the solution map are stated for \eqref{evolution_equation}, in the subcritical case. This will be done via a Picard fixed point argument in the controlled path space.
Finally, Section \ref{sec:examples} will be devoted to examples. For completeness, we will also address in that section a supercritical case corresponding to the transport noise. In particular, the equivalence between the mild representation \eqref{mild_rep} and the weak solution theory provided in \cite{HH} will be discussed.

\subsection{General notation}\label{sec:general notation}
Throughout the paper, $T\in(0,\infty)$ is considered as a fixed, positive time horizon.
We denote by $\N:=\{1,2,\dots\}$ and by $\N_0:=\N\cup\{0\}.$
Integers will be denoted by $\mathbf{Z},$ while real numbers (resp.\ complex numbers) will be 
denoted by $\R$ (resp.\ $\C$).
The set $[0,\infty)$ of non-negative real numbers will be denoted by $\R_+$. We also denote by $\T^n = \R^n/\Z^n$ the $n$-dimensional torus. We write $a \lesssim_{\gamma} b$ provided there exists $C = C(\gamma)$ such that $a \leq C b$. When $C$ can be chosen independently of $\gamma$ we write simply $a \lesssim b$. 

If $X,Y$ are Banach spaces, we denote by $\LL(X,Y)$ the space of continuous linear operators from $X$ to 
$Y,$ endowed with the operator-norm topology.
Similarly, we denote by $\LL_s(X,Y)$ the same space as above but equipped with the strong topology. Recall 
that the strong topology is the coarsest topology that makes the maps $X\ni x\mapsto Sx\in Y$ continuous 
when $S$ varies in $\LL(X,Y).$
For simplicity, we write $\LL(X)= \LL(X,X)$ and likewise $\LL_s(X)=\LL_s(X,X)$.

We shall frequently work with the usual Sobolev spaces $W^{\alpha ,p}(\OO,\R^n)$, $p \geq 1$, $\alpha 
\in\R$ and a domain $\OO \subset \R^n,$ equipped with the standard norm $|\cdot |_{W^{\alpha ,p}}$. 
Moreover, we will denote by $W^{\alpha ,p}_0(\OO,\R^n)$ the closure of $\CC^{\infty}_c(\OO,\R^n)$ (the 
smooth, compactly supported functions) in the topology of $W^{\alpha ,p}(\OO,\R^n)$. For notational 
simplicity we write $W^{k,p}(\OO) : = W^{k,p}(\OO,\R)$ and similarly for Bessel potential spaces $H^{k,p}(\OO) = H^{k,p}(\OO,\R)$ (see Example~\ref{ex:Besel} where these spaces are introduced).

We use the symbol $\pi$ to denote a generic partition of $[0,T]$ and we shall sometimes blur the difference 
between thinking of partitions as a set of points and as a set of intervals. 

In the sequel, we will need to introduce various norms and seminorms. The `rule of thumb' is that quantities,
which are only semi-norms, will be denoted by the brackets $[\cdot ]$ while norms will usually be denoted by 
the simple bars $|\cdot |.$ The double bars $\|\cdot \|$ will be used only for the controlled paths spaces 
$\DD_{X,\alpha }^{2\gamma }$ defined in Section \ref{sec:controlled} and for the norms on functions from the Definition~\ref{def:C^k}.

\subsection*{Acknowledgements}

{\small
	The authors would like to thank M.\ Hairer for many useful discussions and comments. The anonymous referees are warmly thanked for their careful reading of the paper, notably for having pointed out several gaps in the first version.
	AG gratefully acknowledges the financial support by the Leverhulme Trust through Hendrik Weber’s Philip Leverhulme Prize. 
	AH and TN gratefully acknowledge the financial support by the DFG via Research Unit FOR 2402.
}

\section{Settings and main results}
\label{sec:results}
\subsection{Functional analytic framework}

We will first define a concept of monotone family of interpolation spaces $(\BB_\alpha, |\cdot|_\alpha)$, which encodes a notion of `spatial regularity' for  \eqref{evolution_equation}.

\begin{defn}\label{def:monotone_family}
A family of separable Banach spaces $(\BB_\alpha,|\cdot|_\alpha)_{\alpha \in \R}$ is a called a \emph{monotone family of interpolation spaces} if for every $\alpha \leq \beta$, $\BB_\beta$ is a continuously embedded, dense subspace of $\BB_\alpha,$ and if the following interpolation inequality holds:
for $\alpha \leq \beta \leq \gamma$ and $x \in \BB_{\alpha}\cap \BB_{\gamma}$:
\begin{equation} \label{interpolation}
|x|^{\gamma-\alpha}_\beta \lesssim |x|^{\gamma-\beta}_\alpha |x|^{\beta-\alpha}_\gamma.
\end{equation}
\end{defn}
The main interest in considering a family as above is the following property, whose proof is evident by interpolation and therefore left to the reader. We write $\Delta_2 = \{ (t,s) : T \geq t \geq s \geq 0\} $.
If $S\colon \Delta_2  \to \LL(\BB_\alpha )\cap \LL(\BB_{\alpha +1})$ is such that for each $x\in\BB_{\alpha +1},$ and any $(t,s)\in\Delta _2,$
$|(S_{t,s} - \id) x |_{\alpha } \lesssim |t-s| |x|_{\alpha +1}$
while
$|S_{t,s}x|_{\alpha +1} \lesssim |t-s|^{-1} |x|_{\alpha },$ then for every $\sigma\in[0,1]$, $S_{t,s}$ belongs to $\LL(\BB_{\alpha + \sigma} )$ and the following estimate holds true:
\begin{equation} \label{semigroup}
| (S_{t,s} - \id) x |_{\alpha} \lesssim |t-s|^{\sigma}  |x|_{\alpha+\sigma}\;,\qquad |S_{t,s}x|_{\alpha+\sigma } \lesssim |t-s|^{-\sigma } |x|_{\alpha}.
\end{equation}

\begin{exa}[Hilbert space setting]
\label{exa:hilbert}
Let $(\HH, |\cdot|)$ be a separable Hilbert space,
		on which we are given a closed densely defined unbounded operator
		$L\colon D(L)\subset \HH\to \HH$ whose resolvent set contains a sector $\Sigma_{\vartheta,\lambda}:=\{\zeta \in \C,\, |\arg(\zeta -\lambda) | < \pi/2+\vartheta\}$ for some $\vartheta>0,$ $\lambda \in \R$, and such that for every $\zeta \in\Sigma_{\vartheta,\lambda} ,$ $|(\zeta - L)^{-1}|\leq \Lambda(1+|\zeta |)^{-1},$ with $\Lambda>0$ independent on $\zeta$. An operator $L$ satisfying this property is called sectorial. 
		
		For $\alpha > 0$ we can define the fractional powers
		$(-L)^{-\alpha}$ through the formula \cite[Eq.\ (6.3)]{pazy1983semigroups}. Next, introduce the space
		\begin{equation}
		\label{interspaces}
			\HH_\alpha :=\mathrm{Im}(-L)^{-\alpha}\subseteq \HH\,,
		\end{equation}
		endowed with the norm $|x|_\alpha = |(-L)^\alpha x|$.
		Additionally, we define $\HH_{-\alpha}$ as the completion of $\HH$ with respect to the norm $|((-L)^{\alpha})^{-1}\cdot|$ (the fractional powers of $L$ are one-to-one thanks to the sector condition). The interpolation inequality \eqref{interpolation} can be proved using spectral decomposition and H\"older inequality (see \cite[Sec~6]{SPDE}).
	\end{exa} 
	
	\begin{exa}
	\label{exa:scale}
		Let $(\BB_k, |\cdot|_k)_{k \in \Z}$ be a monotone family of reflexive Banach spaces, in the sense that for each $k\in\Z,$ $\BB_{k+1}\hookrightarrow\BB_k$ (densely) and \eqref{interpolation} is satisfied for every $\alpha, \beta, \gamma \in\Z\,.$
		Then for $\theta \in [0,1]$ and $k \in \Z$ we can define a space $\BB_{k+\theta}$ by complex interpolation:
		\begin{equation}
		\label{interpolation0}
			\BB_{k+\theta} := \big[\BB_k,\BB_{k+1}\big]_\theta\,.
		\end{equation}
		For the precise definition and properties of complex interpolation spaces see \cite{calderon1964intermediate,lunardi2018interpolation}. 
		With this definition, it can be shown that $(B_\alpha )_{\alpha \in \R}$ is a monotone family of interpolation spaces.
		The reflexivity of $\BB_k$ is necessary in order to guarantee the consistency relation $\big[\BB_k,\BB_{k+1}\big]_0 = \BB_k$ and $\big[\BB_k,\BB_{k+1}\big]_1 = \BB_{k+1}$ implying that for all $\alpha\leq\beta<\gamma$ we have $\BB_\beta = [\BB_\alpha,\BB_\gamma]_{\frac{\beta-\alpha}{\gamma-\alpha}}$ which in turns implies~\eqref{interpolation} by~\cite[Thm~2.6]{lunardi2018interpolation}.
		
		Note that given a Hilbert space $\HH$ and $L$ as in Example \ref{exa:hilbert} which is self-adjoint, one can construct $(\HH_\alpha)_{\alpha \in \R}$ using the so called \textit{Sobolev tower}. First construct $(\HH_k)_{k\in \Z}$ (which do not require fractional powers of $L$) and then define $(\HH_\alpha)_{\alpha\in \R}$ via the complex interpolation as above. If $L$ is self-adjoint then $(\HH_k)_{k\in \Z}$ will be reflexive and \cite[Thm 4.17]{lunardi2018interpolation} guarantees this construction of $(\HH_\alpha)_{\alpha \in \R}$ is equivalent to the one in Example \ref{exa:hilbert}.
		
		Another example, is to take $\BB_k = W^{k,p}(\OO)$ for $k \in \Z$ and $p \in (1,\infty)$ and where $\OO$ is equal to $\R^n$ or $\T^n$, or is a smooth domain inside $\R^n$.
	\end{exa}

\begin{exa}\label{ex:Besel}
Another useful family of Banach spaces consists of the Bessel potential spaces $H^{s,p}(\R^n),$ for $1<p<\infty$ and $\alpha \in \R,$
that can be defined in terms of Fourier transform $\mathcal{F}$ of tempered distributions $\mathcal S'(\R^n)$ as
\[
H^{\alpha,p}(\R^n):= \left \{f\in \mathcal S'(\R^n)\,,\enskip\|f\|_{H^{\alpha,p}}:=\|\mathcal F^{-1}(1+|\xi|^2)^{\alpha/2}\mathcal Ff\|_{L^p}<\infty\right \}\,.
\]
We recall the well known characterization using the complex interpolation $[\cdot,\cdot ]$ (see \cite{lunardi2018interpolation})
\[
H^{k\theta ,p}(\R^n)=[L^p(\R^n),W^{k,p}(\R^n)]_{\theta }\;,
\]
for each $\theta \in[0,1]$ and $k\in \Z.$
Moreover, it holds $W^{\alpha ,p}(\R^n)=H^{\alpha ,p}(\R^n)$ when 
$\alpha $ is an integer, or when $p=2$ for any $\alpha \in \R,$ and for any $\epsilon >0$ we have the continuous 
embedding $W^{\alpha +\epsilon ,p}(\R^n)\hookrightarrow H^{\alpha ,p}(\R^n)\hookrightarrow W^{\alpha -\epsilon ,p}(\R^n)$ for every $\alpha \in \R$ and $p>1$. One can define $H^{\alpha,p}(\T^n)$ similarly through the Fourier transform on the torus and establish analogous properties. 

For a smooth and bounded domain $\mathcal O \subset\mathcal \R^n,$
the space $H^{\alpha,p}(\mathcal O)$ can be defined algebraically as the set of restrictions $u=f|_{\mathcal O}$ of elements $f\in H^{\alpha,p}(\R^n)$. One then defines the norm of $u\in H^{\alpha,p}(\mathcal O)$ to be the infimum of $|f|_{H^{\alpha,p}(\R^n)}$ over all such functions (see \cite[Chap.~3]{triebel1983theory}). We also define $H^{\alpha,p}_0(\OO)$ to be the closure of $\CC^\infty_c(\OO)$ in $H^{\alpha,p}(\OO)$. An important consequence of the above definition is that Bessel spaces respect complex interpolation
\begin{equation}\label{eq:bessel_interpolation}
	H^{\alpha+\theta,p}(\OO) = [H^{\alpha,p}(\OO),H^{\alpha+1,p}(\OO)]_\theta\;,
\end{equation}
for every $\alpha \in \R$, $\theta \in [0,1]$ and same holds for $H^{\alpha,p}_0(\OO)$. This together with reflexivity of Bessel potential spaces guarantees that both $H^{\alpha,p}(\mathcal O)$ and $H^{\alpha,p}_0(\mathcal O)$ generate a monotone family of interpolations spaces. 
\end{exa}	
	
	\subsection{H\"older spaces and controls}
	
	For $n\geq 2$ and a Banach space $V$, we define $\CC_n(0,T;V) $ to be the space of continuous functions from the simplex $\Delta_n = \{T\geq t_n\geq t_{n-1} \geq \dots \geq t_1 \geq 0\}$ to $V$ .
	For $n=1$ we adopt the convention that $\Delta _1:=[0,T]$ while $\CC(0,T;V)\equiv\CC_1(0,T;V)$ is just the usual space of continuous functions taking values in $V.$
	In the sequel, we will be only interested in the cases $n=1,2,3.$
	If $V$ is a Banach space and 
	\[
	S:\Delta _2\to \LL(V)
	\]
	is a two-parameter family of bounded linear maps,
	we define the increment operator $\ddh$ for $f:\Delta _1\to V$ and $g:\Delta _2\to V$ as
	\begin{align}
		&\ddh f_{t,s} := f_{t} - S_{t,s}f_{s},\quad \text{for}\enskip  (t,s)\in\Delta _2;\label{increment}
		\intertext{while}
		&\ddh g_{t,u ,s} := g_{t,s} - g_{t,u} - S_{t,u}g_{u,s},\quad \text{for}\enskip  (t,u,s)\in\Delta _3,\label{increment1}
	\end{align}
	and we recall (see \cite{gubinelli2010}) that $\mathrm{Im}\,\ddh=\mathrm{Ker}\,\ddh$.\footnote{Here $\ddh$ on the left is from \ref{increment} and $\ddh$ on the right in from \ref{increment1}.}
	When $S=\id$, $\ddh$ corresponds to the usual increment operator from controlled paths theory, and we shall use the notation $\dd:=\delta ^{\id}$. 
	
	For $f \in \CC_1(0,T;\BB_\alpha)$ we let
	\begin{align*}
	|f|_{0,\alpha} = \sup_{0\leq t \leq T}|f_t|_\alpha.
	\end{align*}
	If $\gamma >0$, the norm in $\CC_1^\gamma (0,T;\BB_\alpha )$ is defined as the usual H\"older norm, namely
	\begin{equation}
	\label{holder_norm}
	|f|_{\gamma,\alpha} := |f|_{0,\alpha } + [\delta f]_{\gamma ,\alpha }\,.
	\end{equation}
	where for $g=(g_{t,s}):\Delta _2\to \BB_\alpha ,$ we let $[g]_{\gamma ,\alpha }$ be the quantity
	\begin{equation}
	\label{holder_2}
		[g]_{\gamma,\alpha} := \sup_{0\leq s<t\leq T}\frac{| g_{t,s}|_\alpha}{|t-s|^\gamma}\,.
	\end{equation}
	Equipped with $[\cdot ]_{\gamma ,\alpha }$, the space $\CC_2^\gamma (0,T;\BB_\alpha )$ of all families such that the above quantity is finite forms a Banach space.
If $h=(h_{t,u,s}):\Delta _3\to \BB_\alpha ,$ we let
\begin{equation}
\label{norm_3}
[h]_{\gamma _1,\gamma _2,\alpha }:=
\sup_{(t,u,s)\in\Delta _3}\frac{|h _{t,u,s}|_{\alpha }}{|t-u|^{\gamma_1}|u-s|^{\gamma _2}},
\end{equation} 
and we denote by $\CC_3^{\gamma _1,\gamma _2 }(0,T;\BB_\alpha )$ the Banach space formed by all 3-indices elements as above such that $[h]_{\gamma _1,\gamma _2,\alpha }<\infty.$
All the previous norms can be taken with $\BB_\alpha$ replaced by $\BB_\alpha^m$ for $m \in \N,$ though we will still use the notations $|\cdot|_{\gamma,\alpha },[\cdot ]_{\gamma ,\alpha }$ etc.\ throughout the paper. For paths living in a more general Banach space $V$, the notations $|\cdot |_{\gamma ,V}, [\cdot ]_{\gamma ,V}, [\cdot ]_{\gamma _1,\gamma _2,V}$ will be used instead (with obvious changes in the definitions).

We will also work with functions that exhibit a uniform continuity `similar to H\"older' but in a weaker sense.

\begin{defn}\label{control}
	We say that a function $\omega :\Delta_2 \to \R_+$ is a control if it is a continuous map with $\omega(s,s) = 0$ for all $s \in [0,T]$, superadditive in the sense that
	$$\omega(t,r) + \omega(r,s) \leq \omega(t,s)\;,$$
	for all $0 \leq s \leq r \leq t \leq T$.
	
	For $p \geq 1$ and a Banach space $(V,\, |\cdot|_V)$ define a space $\CC_2^{p-var}(0,T; V)$ of all $g : \Delta_2 \to V$ such that there exists a control $\omega$ with $|g_{t,s}|_V \leq \omega^{1/p}(t,s)$.
\end{defn}
	Note that functions in $\CC_2^{p-var}$ are continuous by the definition of a control. Note also that $\CC^\gamma_2 \subset \CC_2^{1/\gamma-var}$ because $\omega(t,s) = C|t-s|$ is a control for all $C>0$. Another example of a control can be obtained using $p-$variation. Let $g:\Delta_2 \to V$ be a continuous function such that:
	$$\omega_g(t,s) = \sup_{\pi(t,s)} \sum_{t_i \in \pi(t,s)} |g_{t_{i+1},t_i}|^p_V < \infty\;,$$
	where the above supremum ranges over all partitions $\pi(t,s)$ of $[s,t]$. Then $\omega_g(t,s)$ defines a control and $|g_{t,s}|_V \leq \omega^{1/p}_g(t,s)$, thus continuous functions of finite $p$-variation lie in the space $\CC_2^{p-var}$.

\subsection{Assumptions and main results}\label{main_results}

We first state our assumptions on the family $(L_t)_{t\in[0,T]}.$
In what follows, we shall fix a number $\vartheta>0$, $\lambda \in \R$ and define a sector $\Sigma_{\vartheta,\lambda}$ of the complex plane as follows:
	\begin{equation}\label{nota:sigma}
	\Sigma_{\vartheta,\lambda}:=\{\zeta \in \C,\, |\arg(\zeta -\lambda) | < \pi/2+\vartheta\}
	\end{equation}
with the convention adopted throughout the paper that $\arg 0 =0$, thus in particular $[\lambda, \infty) \subset \Sigma_{\vartheta,\lambda}$.

In the next statements, we assume that we are given two Banach spaces 
\[
\mathcal (\mathcal X_1,|\cdot|_1) \subset (\mathcal X_0,|\cdot|_0)\,,
\]
where the embedding is continuous and dense.

	\begin{ass}\label{ass:L_t}
		Let $(L_t)_{t\in[0,T]}$ be a family of closed, densely defined linear operators on $\mathcal X_0$ with domains containing $\mathcal X_1$, and such that there exist constants $\Lambda,M>0$ (depending only on $T$) such that:
		\begin{enumerate}[label=(L\arabic*)]
			\item\label{L1} For each $t\in[0,T]$, the resolvent set $\rho (L_t)$ contains $\Sigma_{\vartheta,\lambda}$ and there exists a constant $\Lambda>0$ such that for $i =0,1,$
			\begin{equation}\label{resolvent}
			|(\zeta  - L_t)^{-1}|_{\LL(\mathcal X_i)}\leq \Lambda\big(1+|\zeta |\big)^{-1},\quad \forall\, \zeta \in\Sigma_{\vartheta,\lambda}\;.
			\end{equation}
			
			\item\label{L2}
			For any $t\in[0,T],$ we have
			\[
			|(\zeta -L_t)^{-1}|_{\LL(\mathcal X_0,\mathcal X_1)}\leq M \,,\quad \forall\, \zeta \in\Sigma_{\vartheta,\lambda}\,.
			\]
			
			\item\label{L3} There exists a control $\omega$ and $\varrho \in (0,1]$ such that for all $(t,s) \in \Delta_2$:
			\begin{align*}
			|L_t  - L_s|_{\LL(\mathcal X_1,\mathcal X_0)} \leq \omega^\varrho(t,s)\,.
			\end{align*}
			
			\item\label{L4} The above control satisfies the following integrability property: for all $(t,s)\in\Delta _2$
			\begin{align*}
				\int_s^t \frac{\omega^\varrho(r,s) dr}{r-s} < \infty\;.
			\end{align*}
		\end{enumerate}
	\end{ass}
	
Let $(L_t)_{t\in[0,T]} $ be such that Assumption \ref{ass:L_t} is satisfied.
From classical results obtained independently by Tanabe and Sobolevskii in the 60's \cite{tanabe1960equations,sobolevskii1966equations} (see also the seminal work of Kato \cite{kato1953integration}),
it is known that for $x\in\mathcal X_0,$ the equation
	\begin{equation}\label{non-autonomous}
	\partial _tu=L_tu_t,\quad u_s:=x \in \mathcal X_0,
	\end{equation}
admits a unique solution (in the usual weak, PDE sense) denoted by $S_{t,s}x$ and which depends linearly on $x\in\mathcal X_0.$
The two parameter mapping $S\colon \Delta _2\to \LL(\mathcal X_0)\cap \LL(\mathcal X_1)$ is usually referred to as the \emph{propagator} associated to the family $(L_t)_{t\in[0,T]}$ (it is sometimes called the `parabolic fundamental solution' associated with $L_\cdot $, but for conciseness we adopt the terminology used in \cite{reed1979methods}). 
The family $(S_{t,s})_{(t,s)\in\Delta _2}$ should be heuristically understood as `$\exp(\int_s^tL_r\,dr)$' and its main interest lies in the existence of the Duhamel-type formula \eqref{mild_rep}.

The precise definition of a propagator is as follows.
	\begin{defn}
	\label{def:propagator}
		We say that $S\colon \Delta _2\to \LL(\mathcal X_0)$ is a \emph{propagator} associated to the family $(L_t)_{t\in[0,T]}$ of unbounded, closed operators with dense domains containing $\mathcal X_1$ if and only if the following holds:
		\begin{enumerate}[label=(P\arabic*)]
			\item\label{P1} $S\in \CC_2(0,T;\LL_s(\mathcal X_0))$ and there exist constants $\lambda ,\Lambda>0$ such that for every $(t,s) \in \Delta_2$
			\begin{equation}
			\label{growth_S}
				|S_{t,s}|_{\LL(\mathcal X_0)}, |S_{t,s}|_{\LL(\mathcal X_1)} \leq \Lambda e^{\lambda(t-s)}\;.
			\end{equation}
			
			\item\label{P2} $S_{t,t}=\id$ and $S_{t,s}=S_{t,u }S_{u ,s}$ for $(t,u ,s)\in\Delta _3$. 
			
			\item\label{P3} There exists $C_T>0$ such that for every $(t,s)\in \Delta_2,$ $s\neq t$:
			\[
			|S_{t,s}-\id|_{\LL(\mathcal X_1,\mathcal X_0)} \leq C_T |t-s|\,.
			\]
			\item\label{P4} For all $s,t \in [0,T]$ and $x \in \mathcal X_1$ we have:
			$$
			\frac{d}{dt}S_{t,s}x = L_tS_{t,s}x \quad  \text{and}  \quad \frac{d}{ds}S_{t,s}x=-S_{t,s}L_sx\,,$$
			where the differentiation is taking place in the Banach space $\mathcal X_0$.
			\item\label{P5} For every $(t,s)\in \Delta_2,$ $s\neq t$ we have that $S_{t,s} \mathcal X_0 \subset \mathcal X_1$ and for some constant $N_T>0$
			\begin{equation}
			\label{smoothing_S}
			|L_t S_{t,s}|_{\LL(\mathcal X_0)} \lesssim |S_{t,s}|_{\LL(\mathcal X_0,\mathcal X_1)} \leq  N_{T}|t-s|^{-1}.
			\end{equation} 
		\end{enumerate}
	\end{defn}

\begin{rem}\label{rem:Lbounded}
If $(L_t)$ satisfies Assumption~\ref{ass:L_t} then the first inequality in \eqref{smoothing_S} can be justified as follows. We claim that for each $t\in [0,T]$ and $y \in \mathcal X_1$
\begin{equation}\label{eq:cts_embeding}
	|L_ty|_{0}\leq C_t|y|_{1}\,.
\end{equation}
Using this bound at $t=0$ together with \ref{L3} implies uniformly for $t\in [0,T]$
\[
|L_ty|_{0}\leq (C_0+\omega^\varrho(0,T))|y|_{1}\;.
\]
It remains to show~\eqref{eq:cts_embeding}. Fix $t \in [0,T]$, the assumption on the closure of $L_t$ shows that $\mathcal{Y} = D(L_t)$ is a complete Banach space with respect to the graph norm $x \mapsto |x|_0 + |L_tx|_0$. It is therefore enough to show that the inclusion $\mathcal{X}_1 \subset \mathcal Y$ is continuous i.e. that $|x|_0+|L_tx|_0 \lesssim |x|_1$ for every $x \in \mathcal X_1$. By the Closed Graph Theorem it is equivalent to show that if $x_n\to x$ in $\mathcal X_1$ and $x_n\to x'$ in $\mathcal Y$ then $x = x'$. Then, since $(\mathcal X_1, |\cdot|_1)$ is continuously embedded in $(\mathcal X_0, |\cdot|_0)$, we have for such sequences
\begin{equation*}
|x_n-x|_0\to 0\quad \text{and}\quad |x_n-x'|_0 + |L_t(x_n-x')|_0\to 0\quad\text{as $n\to\infty$.}
\end{equation*}
This trivially implies $x=x'$ by the uniqueness of the limit in $\mathcal X_0$, thus showing~\eqref{eq:cts_embeding}. 
\end{rem}

As is well-known when $\omega (t,s)=C|t-s|$ for some $C > 0$, Assumption \ref{ass:L_t} guarantees that the family $(L_t)_{t\in[0,T]}$ generates a propagator (see \cite{tanabe1960equations,sobolevskii1966equations}).
Though the general case should be known in principle, we are not aware of any reference in the $p$-variation setting (see nevertheless \cite{kato1953integration} for $p=1$).

\begin{rem}
	\label{rem:not_restrictive}
	Note that in general if $L_t$ is the generator of an analytic semigroup then it satisfies a resolvent bound of the form \eqref{resolvent} for some $\Lambda \geq 1$. As will be seen later, in order to be able to recover the existence and uniqueness of the propagators from the Sewing Lemma (Theorem \ref{thm:MSL}), we need to restrict ourselves to the case when $\Lambda = 1$ in \ref{L1}. This is, however, not restrictive as far as Theorems \ref{thm:main} and \ref{thm:weak} are concerned, since for the case $\Lambda>1$ we can simply refer to the classical results of Tanabe and Sobolevskii \cite{tanabe1960equations,sobolevskii1966equations}.  Note that these results also guarantee that $\Lambda$ in~\eqref{resolvent} and $\Lambda$ in~\eqref{growth_S} are the same. We shall see that this is also true in our case $\Lambda = 1$.
\end{rem}

A criterion that guarantees that $\Lambda = 1$ in~\ref{L1}, is the dissipativity of the operators $L_t$ for each $t \in [0,T]$. Recall that an unbounded operator $A$ on a Banach space $V$ is \emph{dissipative} if
\begin{equation}\label{diss}
\langle Au,u^*\rangle\,\leq 0
\quad \forall u\in D(A)\subset V,\quad u^*\in V^*\,\enskip\text{s.t.\ }\langle u,u^*\rangle = |u|^2=|u^*|^2\;,
\end{equation}
where $\langle{u,u^*}\rangle$ denotes the value of $u^*$ at $u$. Then, it is well-known (see~\cite{pazy1983semigroups,goldstein1985semigroups}) that if for a dissipative operator $A$ and $\zeta \in \rho(A)$ one has the image of $\zeta - A$ being equal to $V$, then $|(\zeta-A)^{-1}|_{\LL(V)} \leq (1+|z|)^{-1}$. For an overview on dissipative operators on Banach spaces, we refer for instance to~\cite{lumer1961dissipative}.

\begin{thm}[Tanabe/Sobolevskii]
	\label{thm:tanabe}
	Assume that the Banach spaces $\mathcal X_0$ and $\mathcal X_1$ are reflexive. Let $(L_t)_{t\in[0,T]}$ be a family of operators satisfying the properties \ref{L1}, \ref{L3} of Assumption \ref{ass:L_t}, with the additional hypothesis that $\Lambda=1$ in \eqref{resolvent}.
	Then, there exists a unique map
	\[
	S\colon \Delta_2\to \LL(\mathcal X_0)\;,
	\]
	satisfying properties \ref{P1}, \ref{P2}, \ref{P3}, \ref{P4}, and in addition for all $(t,s) \in \Delta_2$ we have the property\footnote{The existence of the exponential in~\eqref{eq:S_vs_Exp} is implied by the property \ref{L1}, as will be seen in Section \ref{sec:MSL}.}
	\begin{equation}\label{eq:S_vs_Exp}
		|S_{t,s} - e^{(t-s)L_s}|_{\LL(\mathcal X_1, \mathcal X_0)} \lesssim_{T} |t-s|\, \omega^\varrho(t,s)\;.
	\end{equation}
	Moreover, one has in~\eqref{growth_S} that $\Lambda = 1$ and $\lambda$ is the same as in the sector $\Sigma_{\vartheta,\lambda}$.
	Finally, if~\ref{L4} is also satisfied, then $S$ is the propagator associated to the family $(L_t)_{t\in[0,T]}$  (in the sense of Definition \ref{def:propagator}).
\end{thm}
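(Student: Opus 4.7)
The plan is to apply the multiplicative Sewing Lemma (Theorem~\ref{thm:MSL}) to the two-parameter germ
\[
\mu_{t,s}:=e^{(t-s)L_s},\qquad (t,s)\in\Delta_2,
\]
and to identify the resulting product integral $S_{t,s}=\lim_{|\pi|\to 0}\prod_{[t_i,t_{i+1}]\in\pi}e^{(t_{i+1}-t_i)L_{t_i}}$ with the sought propagator. Since $\Lambda=1$ in~\ref{L1} and $\Sigma_{\vartheta,\lambda}\subset\rho(L_s)$, classical Hille--Yosida theory for analytic semigroups ensures that $e^{hL_s}\in\LL(\mathcal X_0)\cap\LL(\mathcal X_1)$ is well defined for every $h\geq 0$, with $|e^{hL_s}|\leq e^{\lambda h}$ in both spaces and with the smoothing bound $|e^{hL_s}|_{\LL(\mathcal X_0,\mathcal X_1)}\lesssim h^{-1}$ for $h>0$; this already takes care of the existence of the exponentials appearing in~\eqref{eq:S_vs_Exp}.

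The crucial step is the multiplicative defect estimate. Integrating $\tau\mapsto\frac{d}{d\tau}\bigl[e^{(t-u-\tau)L_u}e^{(\tau+u-s)L_s}\bigr]$ between $\tau=0$ and $\tau=t-u$ yields the Duhamel identity
\[
\mu_{t,u}\mu_{u,s}-\mu_{t,s}=\int_0^{t-u}e^{(t-u-\tau)L_u}(L_u-L_s)\,e^{(\tau+u-s)L_s}\,d\tau.
\]
Combined with the bound $|L_u-L_s|_{\LL(\mathcal X_1,\mathcal X_0)}\leq \omega^\varrho(u,s)$ from~\ref{L3} and the above semigroup bounds, this produces
\[
|\mu_{t,u}\mu_{u,s}-\mu_{t,s}|_{\LL(\mathcal X_1,\mathcal X_0)}\lesssim (t-u)\,\omega^\varrho(u,s)\leq (t-s)\,\omega^\varrho(t,s),
\]
a super-additive defect of exactly the form required by Theorem~\ref{thm:MSL}, with the ``size'' of $\mu$ measured in $\LL(\mathcal X_0)\cap\LL(\mathcal X_1)$ and the ``distance'' in $\LL(\mathcal X_1,\mathcal X_0)$. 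Applying the sewing lemma directly produces the map $S$ together with the comparison~\eqref{eq:S_vs_Exp}, and the uniqueness statement in Theorem~\ref{thm:MSL} yields uniqueness of $S$ within the class considered.

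From the product structure of the sewing output one reads off~\ref{P2} together with the norm bound in~\ref{P1} with $\Lambda=1$ and the same $\lambda$ as in the sector: each factor $e^{(t_{i+1}-t_i)L_{t_i}}$ obeys the corresponding bound in $\LL(\mathcal X_0)$ and $\LL(\mathcal X_1)$, hence the products and their limits do too. Property~\ref{P3} follows from~\eqref{eq:S_vs_Exp} combined with the elementary estimate $|e^{(t-s)L_s}-\id|_{\LL(\mathcal X_1,\mathcal X_0)}\lesssim |t-s|$ coming from integrating $L_s\,e^{\tau L_s}$ in $\tau$. For~\ref{P4}, the idea is to pass to the limit in the Riemann sums underlying the product-integral construction to derive the representation $S_{t,s}x-x=\int_s^t S_{t,r}L_r x\,dr$ for $x\in\mathcal X_1$, and then differentiate in $s$ and $t$.

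The main obstacle is the smoothing property~\ref{P5} under~\ref{L4}, because the defect analysis above only controls $S$ in $\LL(\mathcal X_0)$ and $\LL(\mathcal X_1,\mathcal X_0)$, and one has to upgrade this to a bound in $\LL(\mathcal X_0,\mathcal X_1)$. The strategy here is to revisit the Tanabe-type perturbative identity
\[
S_{t,s}=e^{(t-s)L_s}+\int_s^t S_{t,r}(L_r-L_s)\,e^{(r-s)L_s}\,dr,
\]
derived from~\ref{P4} by constant-coefficient variation of parameters. The free term already carries the sharp smoothing $\lesssim |t-s|^{-1}$, while the corrective integrand has kernel of size $|S_{t,r}|_{\LL(\mathcal X_0,\mathcal X_1)}\,(r-s)^{-1}\omega^\varrho(r,s)$, so a Volterra/bootstrap argument closes precisely when $r\mapsto (r-s)^{-1}\omega^\varrho(r,s)$ is integrable near $s$---which is exactly condition~\ref{L4}. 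The local smoothing bound thus obtained is then propagated to all of $\Delta_2$ via the splitting $S_{t,s}=S_{t,(t+s)/2}\,S_{(t+s)/2,s}$ afforded by~\ref{P2}.
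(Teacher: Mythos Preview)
Your proposal is correct and follows the same overall route as the paper: apply the multiplicative sewing lemma (in its Banach-algebra form, Corollary~\ref{cor:mult}) to $\mu_{t,s}=e^{(t-s)L_s}$ in the monoid $\LL(\mathcal X_0)\cap\LL(\mathcal X_1)$ with distance $|\cdot|_{\LL(\mathcal X_1,\mathcal X_0)}$, read off \ref{P1}--\ref{P3} from the sewing output, get \ref{P4} from the comparison bound~\eqref{eq:S_vs_Exp}, and close \ref{P5} via the Tanabe--Volterra identity and a Gronwall argument using~\ref{L4} with the midpoint splitting. The one substantive difference is the defect estimate: the paper factors $\mu_{t,s}-\mu_{t,u}\mu_{u,s}=(e^{(t-u)L_s}-e^{(t-u)L_u})e^{(u-s)L_s}$ and bounds the first factor by the contour-integral perturbation estimate~\eqref{e:perturb}, whereas you obtain the same $(t-u)\,\omega^\varrho(u,s)$ bound via a direct Duhamel integral. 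Your variant is more elementary (no Dunford--Taylor integrals), while the paper's~\eqref{e:perturb} is reused to verify continuity of $\mu$ in the sewing hypotheses. Two small points you passed over: the completeness of $|\cdot|_\A$-balls for the weaker metric $|\cdot|_{\LL(\mathcal X_1,\mathcal X_0)}$ is precisely where reflexivity is used (the paper's Lemma~\ref{topology3}); and for~\ref{P4} the paper computes the difference quotient directly from~\eqref{eq:S_vs_Exp} and $\epsilon^{-1}(e^{\epsilon L_t}-\id)\to L_t$, which is cleaner than the Riemann-sum limit you sketch.
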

In Section \ref{sec:MSL}, we will provide a whole new proof of the above result which is based on the `multiplicative sewing lemma' Theorem \ref{thm:MSL}. We would like to point out that in order to meet the hypotheses of Theorem \ref{thm:MSL}, in the above theorem we have to assume that both $\mathcal{X}_0$ and $\mathcal{X}_1$ are reflexive (see Lemma~\ref{topology3}). Note that \cite{tanabe1960equations} does not need to assume reflexivity, while \cite{kato1953integration} does assume reflexivity as well.

For our purposes, we will need the propagators to have stronger properties and act not only on two spaces $\mathcal X_0$ and $\mathcal X_1$ but on a continuous range of interpolation spaces $(\BB_\alpha)_{\alpha\in\R}$ for $\alpha \in I$ where $I\subset \R$ is an interval. For this purpose, stronger assumptions on the family $L_t$ will be required.
We have the following definition.

\begin{defn}\label{def:part}
Let $(\BB_\alpha )_{\alpha \in \R}$ be a monotone family of interpolation spaces, and fix an interval $I\subset\R.$
We say that $S$ is a propagator on the full range $(\BB_{\alpha })_{\alpha \in I}$ if for every $\alpha \in I,$ $S$ restricted to $\BB_\alpha $ is itself a propagator, in the sense of Definition \ref{def:propagator} for the pair $(\mathcal X_0,\mathcal X_1)=(\BB_\alpha ,\BB_{\alpha +1})$.
\end{defn}

 We now give a concrete example of a family $(L_t)_{t\in[0,T]}$ where Assumption \ref{ass:L_t} is fulfilled with $\Lambda = 1$.
 \begin{exa}\label{ex:family1}
 	Let $1<p<\infty$ and define
 	\[
 	\mathcal X_0:=L^p(\T^n) \quad \text{and}\quad \mathcal X_1:= W^{2,p}(\T^n).
 	\]
 	Then $\mathcal X_1$ is a constant domain associated to the family $(L_t)_{t\in[0,T]}$ defined for each $t\in[0,T]$ as:
 	\[
 	L_tu:= \nabla \cdot (a_t(x) \nabla u),\quad u\in  \mathcal X_1,
 	\]
 	where $a_t(x) \in \R^{n \times n}$ is a matrix satisfying the following uniform ellipticity condition: there exists a constant $\varkappa >0$ such that
 	\begin{equation}
 		\label{uniform_ellipticity}
 		\sum_{j,k}a^{jk}_t(x)\xi ^j\xi ^k\geq \varkappa |\xi |^2\;,
 	\end{equation}
 	for every $t\in[0,T],$ $x\in\T^n$ and $\xi\in \R^n$. Moreover, we assume that for every $t \in [0,T],$
 	\begin{equation}
 	\label{a_t_C_2}
 	a_t( \cdot)\in \CC^{2}(\T^n; \LL(\R^n)).
 	\end{equation} 
 	The results of~\cite[Sec. 7.3]{pazy1983semigroups} imply that for every $t \in [0,T]$ the operator $L_t$ is sectorial, and since above $\varkappa$ is independent of $t$ one can choose the same $\vartheta$ and $\lambda$ in $\Sigma_{\vartheta,\lambda}$ for every $t \in [0,T]$. Moreover, it is classical (see for instance \cite{cialdea2006criteria}) 
 	that for each $y \in [0,T]$ the operator $L_t$ is dissipative and satisfies property \ref{L1} with $\Lambda = 1$, but
 	since the proof is relatively simple, we now give a short argument why this is true in the case $p \geq 2$ (the case when $p \in (1,2)$ can be done similarly). Fix $t \in [0,T]$. The fact that  $\mathrm{Im}(\zeta - L_t)=\mathcal X_0$ for all $\zeta \in \rho(L_t)$ is well-known for spatially $\CC^2$ coefficients (see again~\cite[Sec. 7.3]{pazy1983semigroups}). We now want to show that $L_t$ is dissipative in the sense of~\eqref{diss}.

	Let $f \in \CC^\infty(\T^n)$ be non zero and define
 	$$g(x) = \frac{|f(x)|^{p-2}}{|f|^{p-2}_{L^p}}  f(x)\,.$$ 
 	Let $q$ be such that $p^{-1} + q^{-1} = 1$ then we have
 	$$|f|^2_{L^p} = |g|^2_{L^q} = \int_{\T^n} f(x)g(x)dx\;.$$
 	We can now use this $g$ and the uniform ellipticity~\eqref{uniform_ellipticity} to deduce
 	\begin{align*}
 		\langle{L_t f, g}\rangle &= - \int_{\T^n} a_t(x) \nabla f(x) \cdot \nabla g(x) dx \\
 		& = - |f|^{2-p}_{L^p} \int_{\T^n} a_t(x) \nabla f(x) \cdot \nabla (|f|^{p-2}(x) f(x)) \\
 		& = - |f|^{2-p}_{L^p} (p-1) \int_{\T^n} |f(x)|^{p-2} a_t(x) \nabla f(x) \cdot \nabla f(x) dx \\
 		& \leq - \varkappa |f|^{2-p}_{L^p} (p-1) \int_{\T^n} |f(x)|^{p-2} |\nabla f(x)|^2 \leq 0\;.
 	\end{align*}
 	By density, this inequality can be extended to an inequality for all $f \in W^{2,p}(\T^n)$. This shows that $L_t$ is dissipative, hence $\Lambda=1.$
 	
 	Moreover, the operators $(L_t)_{t \in [0,T]}$ satisfy \ref{L2} but potentially not uniformly in $t$. If in addition, there exists $p \geq 1$ and a control $\omega$ such that for all $(t,s) \in \Delta_2$
 	\begin{align*}
 		\sup_{x \in \T^n} |a_t(x)-  a_s( x)| \leq \omega^{1/p}(t,s)\;,
 	\end{align*}
 	then~\ref{L2} is satisfied uniformly and moreover~\ref{L3} holds true with $\varrho = 1/p$. (For justification in a case of H\"older continuous $t \mapsto a_t(x)$ see \cite[Section 14]{amann1984existence}, a general control case would follow by using a time change.)
 	The property \ref{L4} is satisfied for example in the case of H\"older continuity i.e when $\omega(t,s) = C|t-s|$.
 
 Finally, if the condition \eqref{a_t_C_2} is replaced by the stronger assumption that $a_t( \cdot)\in \CC^{\infty}(\T^n; \LL(\R^n)),$ then $S$ extends uniquely to a propagator on the full range $(\BB_\alpha )_{\alpha \in\R}$ (in the sense of Definition \ref{def:part}), where the $\BB_\alpha = H^{2\alpha,p}(\T^n)$ are the Bessel potential spaces from Example~\ref{ex:Besel}. See Example~\ref{ex:full_propagator} for more details.
 \end{exa}

Next, in order to present our main results on the evolution equation \eqref{evolution_equation}, we first need to recall the definition of a two-step rough path. 
	\begin{defn}[Rough Path] Let $\gamma >1/3$. We define the space of rough paths $\cC^\gamma(0,T; \R^d)$ to consist of the pairs $(X,\XX) =:\X$ such that $X \in \CC^\gamma(0,T;\R^d)$, $\XX \in \CC_2^{2\gamma}(0,T;\R^{d\times d})$ are satisfying the Chen's relation:
		\begin{equation}\label{chen}
			\delta\XX^{i,j}_{t,u ,s} :=\XX^{i,j}_{t,s}-\XX^{i,j}_{t,u }-\XX^{i,j}_{u ,s}= \delta X^i_{t,u }\delta X^j_{u ,s},
		\end{equation}
		for every $(t,u ,s)\in\Delta _3$ and $1\leq i,j\leq d.$
		The rough paths space is equipped with the pseudometric
	\begin{align*}
		\varrho_\gamma(\X,\tilde{\X}) = [X-\tilde{X}]_\gamma + [\XX-\tilde{\XX}]_{2\gamma}\;,
	\end{align*}
where the quantities $[X]_{\gamma}$ resp.\ $[\XX]_{2\gamma}$ are the H\"older seminorms defined in \eqref{holder_2}. For simplicity, we shall write in the sequel $\varrho_\gamma(\X) := \varrho_\gamma(0,\X)$.
	\end{defn}

Now, we need to restrict our study to a suitable class of non-linearities.
\begin{defn}\label{def:C^k}
	Let $\BB := (\BB_\alpha)_{\alpha \in \R}$ be a monotone family of interpolation spaces. For some fixed $\alpha,\beta \in \R$ and $k \in \N_0$ we define the space
	$\CC^k_{\alpha,\beta}(\BB^m,\BB^n)$ as the space of $k$-differentiable functions $G \colon\BB^m_\theta \to \BB^n_{\theta+\beta}$ for every $\theta\geq\alpha$, and $n,m \in \mathbf{N}_0$ and such that $D^iG$ sends bounded subsets of $\BB^m_\theta$ to bounded sets of $\BB^n_{\theta+\beta}$, for all $i = 0, \dots k$.
	
	Similarly, define $\Lip_{\alpha,\beta}(\BB^m,\BB^n)$ to be a space of Lipschitz functions $\BB^m_\theta \to \BB^n_{\theta+\beta}$ for all $\theta \geq \alpha$ that send bounded sets to bounded sets and such that the Lipschitz constant is uniformly bounded on bounded sets. When $m = n$ we will simply write $C^k_{\alpha,\beta}(\BB^m)$ or $\Lip_{\alpha,\beta}(\BB^m)$.
\end{defn}

We now have all at hand to introduce our main result on the existence and uniqueness of solutions.
\begin{thm}[Solvability of \eqref{evolution_equation}]
\label{thm:main}
Fix a two-step rough path $\X = (X, \XX)$ in $\cC^\gamma(0,T;\R^d)$ with $\gamma >1/3,$ and 
consider a monotone family of interpolation spaces $(\BB_\alpha )_{\alpha \in \R}.$
Assume that $(L_t)_{t\in[0,T]}$ is a given family of linear operators generating the propagator $(S_{t,s})_{(t,s) \in \Delta_2}$ on $(\BB_\alpha )_{\alpha \in (-3\gamma ,0]}$.
 For some $\sigma < \gamma$, assume that we are given a non-linearity
$F \in \CC^3_{-2\gamma,-\sigma}(\BB,\BB^d)$ and $N\in \Lip_{\alpha,-\delta}(\BB)$ for some $0 \leq \delta < 1$.

Then, for every $x \in \BB_0,$ there exists a maximal time $\tau \in (0,T]$ and a unique function $u \in \CC([0,\tau),\BB_0)$ such that $(u,F(u))$ is a controlled rough path in the sense of Definition~\ref{def:controlled} and
 $u$ is a \emph{mild solution} to the Rough PDE \eqref{evolution_equation}, namely 
		\begin{equation} \label{e:rpde1}
			u_t = S_{t,0}x + \int_0^t S_{t,r}N(u_r)dr+ \int_0^t S_{t,r} F(u_r) \cdot d\X_r \, , \quad t<\tau, 
		\end{equation}
where the latter integral is understood in the rough integral sense of Theorem~\ref{integration}.
\end{thm}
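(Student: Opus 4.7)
\emph{Proof plan for Theorem \ref{thm:main}.}
The plan is a Picard fixed-point argument in the space $\DD_{X,\alpha}^{2\gamma}$ of $X$-controlled paths on the scale $(\BB_\alpha)$, introduced in Section \ref{sec:controlled}. Choose an auxiliary level $\alpha \in (-3\gamma, 0]$ compatible with the losses $\sigma$ and $\delta$: specifically, one requires $\alpha - \sigma > -3\gamma$ so that the rough convolution $\int_0^\cdot S_{\cdot,r} F(u_r)\cdot d\X_r$ makes sense via Theorem \ref{integration}, and $\alpha - \delta > -3\gamma$ for the Young/Bochner drift term. Given $x\in \BB_0$, one works in the closed ball $B_R$ of radius $R > 0$ inside an affine subspace of $\DD_{X,\alpha}^{2\gamma}$ consisting of paths $(u, u')$ with $u_0 = x$, $u_0' = F(x)$, and set Gubinelli derivative $u' = F(u)$.

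The Picard map is
\begin{equation*}
\Gamma(u, F(u)) := \bigl(v,\, F(v)\bigr), \qquad v_t := S_{t,0}x + \int_0^t S_{t,r} N(u_r)\, dr + \int_0^t S_{t,r} F(u_r)\cdot d\X_r\,.
\end{equation*}
First I would check that $\Gamma$ is well-defined: the rough convolution exists and yields a controlled path by the rough integration theorem (Theorem \ref{integration}), after verifying that $(F(u), DF(u)F(u))$ is itself controlled on an appropriate space. This last fact is an infinite-dimensional chain rule argument: since $F \in \CC^3_{-2\gamma, -\sigma}(\BB, \BB^d)$, a second-order Taylor expansion gives
\begin{equation*}
F(u_t) - S_{t,s} F(u_s) = DF(u_s) \bigl(u_t - S_{t,s}u_s\bigr) + \text{remainder}\,,
\end{equation*}
from which the controlled structure of $F(u)$ with Gubinelli derivative $DF(u)F(u)$ follows, with the subcriticality $\sigma < \gamma$ ensuring the remainder has the required H\"older exponent $2\gamma$ in the shifted scale. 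Similarly the drift term is handled by a Young-type estimate using $\delta < 1$ and the smoothing \eqref{semigroup}.

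The bulk of the technical work is to establish the a priori bound
\begin{equation*}
\|\Gamma(u, F(u))\|_{\DD_{X,\alpha}^{2\gamma}(0,T_0)} \leq C_1 (1 + \|x\|_{\BB_0}) + C_2(R)\,T_0^\eta
\end{equation*}
for some $\eta > 0$ depending on $\gamma - \sigma$ and $1 - \delta$, together with the Lipschitz-type bound
\begin{equation*}
\|\Gamma(u^1, F(u^1)) - \Gamma(u^2, F(u^2))\|_{\DD_{X,\alpha}^{2\gamma}(0,T_0)} \leq C_3(R)\,T_0^\eta\, \|(u^1, F(u^1)) - (u^2, F(u^2))\|_{\DD_{X,\alpha}^{2\gamma}(0,T_0)}\,.
\end{equation*}
Both follow from the continuity estimates for the rough convolution, plus local Lipschitzianity of the map $u \mapsto (F(u), DF(u)F(u))$ on controlled paths (which uses $F \in \CC^3$ crucially). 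Choosing $R$ large enough and $T_0$ small enough gives a contraction mapping, yielding a unique fixed point on $[0, T_0]$. The continuity $u \in \CC([0,T_0], \BB_0)$ is then read off from the mild formulation together with the smoothing property \eqref{semigroup} of the propagator, which upgrades the $\BB_\alpha$-regularity of the rough convolution to $\BB_0$-continuity at positive times, combined with $S_{\cdot,0}x \in \CC([0,T_0], \BB_0)$.

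Finally, the maximal solution is obtained by standard concatenation: if the solution exists on $[0, T_0]$, one re-initiates the Picard scheme from $u_{T_0} \in \BB_0$ and glues the controlled structures using Chen's relation \eqref{chen}. The maximal time $\tau$ is characterized by the blow-up of $|u_t|_{\BB_0}$ as $t \uparrow \tau$ (if $\tau < T$). The main obstacle throughout is the interplay between the loss of spatial regularity at each application of $F$ (of order $\sigma$) and the temporal regularity of $X$ (of order $\gamma$): the subcriticality condition $\sigma < \gamma$ is exactly what is needed to close the fixed-point argument with a positive power of $T_0$ in the contraction estimates, and verifying this quantitatively in the $\DD_{X,\alpha}^{2\gamma}$-norm is the most delicate bookkeeping step.
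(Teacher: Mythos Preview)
Your overall strategy matches the paper's: Picard iteration in a controlled-path space, using a composition lemma for $u\mapsto (F(u),DF(u)F(u))$, continuity of the rough convolution (Theorem~\ref{integration} and Corollary~\ref{cor:continuous}), and concatenation for maximality. Two points, however, need correction.

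First, and most importantly, you propose to run the fixed point directly in $\DD_{X,\alpha}^{2\gamma}$ and claim a contraction factor $T_0^\eta$ with $\eta>0$ ``depending on $\gamma-\sigma$ and $1-\delta$''. This does not work as stated. In Corollary~\ref{cor:continuous} the small factor is $T^{\varepsilon}$ with $\varepsilon=\min\{\gamma-\gamma',\gamma'-\sigma\}$, which vanishes when $\gamma'=\gamma$. The culprit is the contribution $S_{t,s}y'_s:\XX_{t,s}$ to the remainder $R^z$: it has exactly $2\gamma$-H\"older regularity in $\BB_{\alpha-2\gamma}$ with no room to spare, so no positive power of $T$ appears. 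The paper's fix (see the proof of Theorem~\ref{RPDE}) is to sacrifice a little time regularity: one picks $\gamma'\in(\sigma,\gamma)$, runs the contraction in the \emph{larger} space $\DD_{X,\alpha}^{2\gamma'}$, and only afterwards reads off from the equation and the sharper bounds~\eqref{e:integration2},~\eqref{e:compos1} that the fixed point actually lies in $\DD_{X,\alpha}^{2\gamma}$. The paper also centers the ball at $(\xi,\xi')$ with $\xi_t=S_{t,0}x+\int_0^tS_{t,r}F(x)\cdot d\X_r$, $\xi'=F(x)$, rather than at zero; this absorbs the $|y_0|_\alpha$ term in Corollary~\ref{cor:continuous} and makes the invariance step clean.

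Second, your Taylor expansion ``$F(u_t)-S_{t,s}F(u_s)=DF(u_s)(u_t-S_{t,s}u_s)+\text{remainder}$'' is not how the composition lemma works here. The controlled-path structure in Definition~\ref{def:controlled} uses the \emph{plain} increment $\delta$, precisely so that the standard Taylor formula $F(u_t)-F(u_s)=DF(u_s)\delta u_{t,s}+O(|\delta u_{t,s}|^2)$ applies (see Lemma~\ref{lem:composition} and the remark following Definition~\ref{def:controlled}). The propagator enters only through the rough convolution, not through the chain rule. Your discussion of choosing an auxiliary $\alpha\in(-3\gamma,0]$ is also unnecessary: in Theorem~\ref{thm:main} one simply takes $\alpha=0$ (the general case is Theorem~\ref{RPDE}).
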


\begin{rem}\label{rem:young}
 When the rough path $X$ is $\beta $-H\"older with $\beta >1/2,$ then $u$ is a solution to the mild equation~\eqref{e:rpde1} where all the integrals are well-defined using Young integration.
\end{rem}

\begin{rem}\label{rem:Le}
	It was recently shown by L\^e \cite{le2020stochastic} that the additive sewing Lemma of \cite{gubinelli2004} has a natural extension in a stochastic setting, exploiting a certain martingale decomposition. As already emphasized in \eqref{subcritical_example}, the value $\sigma=\gamma$ is critical, even though It\^o solutions exist and are unique for $\sigma=\frac12$ and $X$ is Brownian so that $\gamma=\frac12-\varepsilon$ for any $\varepsilon>0$ (to avoid issues related to stochastic parabolicity, note however that \eqref{subcritical_example} has to be understood first as a Stratonovich equation, then corrected to an equivalent It\^o form).
	It should be possible to deal simultaneously with the semigroup $\exp(t\Delta)$ and the semi-martingale structure of the right-hand side, in order to recover such solvability results. We chose nevertheless to leave this question for future investigations.
\end{rem}

When talking about the mild solutions it is natural to ask whether these coincide with the weak solutions. The following statement can be considered as an answer to this question.
\begin{thm}
	\label{thm:weak}
	Let $(\BB_\alpha ),(L_t),N,F,x$ be as in Theorem~\ref{thm:main} and let $\nu = \max\{1, \sigma+2\gamma\}$. Then for all $\varphi \in \BB_{-\nu}^*$ the following integral formula holds:
	\begin{equation}
	\label{integral_weak}
	\langle u_t,\varphi\rangle= \langle x,\varphi\rangle + \int_0^t \langle L_su_s,\varphi\rangle ds + \int_0^t \langle N(u_s),\varphi\rangle ds + \int_0^t \langle F(u_s),\varphi\rangle \cdot d\X_s\;,
	\end{equation}
	where the last integral is the usual rough integral in the sense of \cite[Theorem 4.10]{friz} and $L_su_s$ is viewed as an element of $\BB_{-1}$. Conversely, if $(u,F(u))$ is a controlled rough path in the sense of Definition~\ref{def:controlled} and \eqref{integral_weak} holds for all $\varphi \in \BB_{-\nu}^*$, then it is also a mild solution, namely \eqref{e:rpde1} holds. 
\end{thm}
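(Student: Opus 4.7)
The plan is to reduce both implications to a single Fubini-type identity. Writing $G\,d\mu$ for either $N(u)\,dr$ or $F(u)\,d\X$ and using the propagator identity $S_{t,r}=\id+\int_r^t L_qS_{q,r}\,dq$ (a consequence of property (P4)), one expects
\begin{equation*}
\int_0^t S_{t,r}G(r)\,d\mu(r)=\int_0^t G(r)\,d\mu(r)+\int_0^t L_q\int_0^q S_{q,r}G(r)\,d\mu(r)\,dq,
\end{equation*}
understood after testing against $\varphi\in\BB_{-\nu}^*$. Once this is established for $\mu=$ Lebesgue and for $\mu=\X$, both directions will follow from straightforward manipulations.

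\textbf{Mild $\Rightarrow$ weak.} Test \eqref{e:rpde1} against $\varphi\in\BB_{-\nu}^*$ and rewrite the three terms on the right-hand side:
\emph{(a)} property (P4) gives $\langle S_{t,0}x,\varphi\rangle=\langle x,\varphi\rangle+\int_0^t\langle L_rS_{r,0}x,\varphi\rangle\,dr$;
\emph{(b)} for the drift, combining (P4) with classical Fubini yields $\int_0^t\langle S_{t,r}N(u_r),\varphi\rangle\,dr=\int_0^t\langle N(u_r),\varphi\rangle\,dr+\int_0^t\langle L_q\int_0^q S_{q,r}N(u_r)\,dr,\varphi\rangle\,dq$;
\emph{(c)} the analogous statement for the rough integral---see below.
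Summing (a)--(c) and invoking the mild representation of $u_q$ inside the resulting $L_q$-terms, those terms coalesce into $\int_0^t\langle L_s u_s,\varphi\rangle\,ds$, which produces exactly \eqref{integral_weak}.

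\textbf{The rough Fubini (c) is the main obstacle.} By Theorem~\ref{integration}, the vector-valued rough convolution $\int_0^t S_{t,r}F(u_r)\,d\X_r$ and the scalar rough integral $\int_0^t\langle F(u_r),\varphi\rangle\,d\X_r$ (the latter available through \cite[Thm~4.10]{friz} precisely because $\nu=\max\{1,\sigma+2\gamma\}$ makes $r\mapsto\langle F(u_r),\varphi\rangle$ a Gubinelli controlled scalar path) are limits of Riemann sums built from $\Xi_{v,u}:=F(u_u)\delta X_{v,u}+DF(u_u)F(u_u)\XX_{v,u}$. Their discrete difference along a partition $\pi$ of $[0,t]$ may be rewritten as
\begin{equation*}
\sum_{[u,v]\in\pi}(S_{t,u}-\id)\Xi_{v,u}=\sum_{[u,v]\in\pi}\int_u^t L_qS_{q,u}\Xi_{v,u}\,dq=\int_0^t L_q\Big(\sum_{[u,v]\in\pi,\,u<q}S_{q,u}\Xi_{v,u}\Big)dq,
\end{equation*}
where the second step is a classical Fubini. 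As $|\pi|\to 0$, the inner sum paired with $\varphi$ converges pointwise in $q$ to $\langle\int_0^q S_{q,r}F(u_r)\,d\X_r,L_q^*\varphi\rangle$---the single boundary cell containing $q$ contributes $O(|\pi|^\gamma)$ thanks to $|\Xi_{v,u}|=O(|v-u|^\gamma)$ and the smoothing of $S_{q,u}$. A dominated-convergence argument on the outer $q$-integral, based on the uniform sewing bounds, then yields (c).

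\textbf{Weak $\Rightarrow$ mild.} Define the mild candidate $w_t:=S_{t,0}x+\int_0^t S_{t,r}N(u_r)\,dr+\int_0^t S_{t,r}F(u_r)\,d\X_r$. Since $w$ is a mild expression with fixed forcings $N(u),F(u)$, the forward direction applies verbatim and shows that $w$ satisfies \eqref{integral_weak} with $u$ replaced by $w$ only inside the $L_s$-term. Subtracting from \eqref{integral_weak} itself, the difference $v:=u-w$ solves $\langle v_t,\varphi\rangle=\int_0^t\langle L_sv_s,\varphi\rangle\,ds$ for every $\varphi\in\BB_{-\nu}^*$, with $v_0=0$. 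Uniqueness for this linear Cauchy problem can be obtained, in the reflexive setting, by duality against the adjoint propagator (produced by Theorem~\ref{thm:tanabe} applied to the time-reversed family $(-L^*_{T-\cdot})$): differentiating $t\mapsto\langle v_t,S_{T,t}^*\psi\rangle$ shows it is constant in $t$ for any test $\psi$, and $v_0=0$ then forces $v\equiv 0$. Hence $u=w$ is mild, completing the equivalence.
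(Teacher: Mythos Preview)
Your approach is correct and essentially mirrors the paper's: both directions hinge on the same rough-Fubini identity (isolated as Lemma~\ref{lem:weak}, whose proof you sketch directly via Riemann sums rather than citing \cite{gerasimovics2018}), and the paper's Mild $\Rightarrow$ Weak computation is simply yours read in reverse, starting from $\int_0^t\langle L_s u_s,\varphi\rangle\,ds$ and arriving at $\langle u_t,\varphi\rangle-\int_0^t\langle F(u_r),\varphi\rangle\,d\X_r$. For Weak $\Rightarrow$ Mild the paper opts for the direct Da~Prato--Zabczyk argument of testing against $s\mapsto S_{t,s}^*\psi$ (as illustrated in Lemma~\ref{lem:transportMildFormulation}) rather than your subtract-and-invoke-uniqueness route, but both ultimately rest on the adjoint propagator and are equivalent.
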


For the precise statement, we refer the reader to the Theorem~\ref{weak}, where we also show that weak solutions are mild solutions.

\subsection{An illustrating example: non-autonomous stochastic reaction-diffusion equations}
\label{subsec:specified}

Consider the non-autonomous evolution problem
	\begin{align}\label{react_diff}
		du_t(x) &= \nabla\cdot \big(a_t(x) \nabla u_t(x)\big) dt + f(u_t(x))dt+\sum_{i=1}^d p_i(u_t(x))d\X^i_t\;,\\
		u_0 &\in H^{k,p}({\T^n})\;,\nonumber
	\end{align}
with $(a^{ij}) \in \CC^{\varrho,\infty}([0,T]\times {\T^n} ; \LL(\R^n)),$ $1\leq i,j\leq n,$ for some $\varrho >0$ as in Example~\ref{ex:family1}. We assume that $f$ and $p_i$ are polynomials with coefficients in $H^{k,p}({\T^n})$ i.e.
\begin{align*}
	f(x,u(x)) = \sum_{j = 0}^m h_j(x) u^j(x)\,\quad x\in {\T^n}\;,
\end{align*}
for some $m \in \N_0$ and $h_j \in H^{k,p}({\T^n})$ for every $j \in \{1,\dots,m\}$, and $p_i$ has a similar form for all $i \in \{1,\dots,d\}$.
Furthermore,
$X_t = (X_t^1,X_t^2,...,X_t^d)$ is assumed to be endowed with a rough path enhancement $\X=(X,\XX)\in \cC^\gamma(\R_+,\R^d)$ with $\gamma >1/3$.

Fix $p\in(1,\infty)$ and for $\alpha \in \R,$ define $\BB_\alpha :=H^{k+2\alpha,p}({\T^n})$ (this is indeed a 
monotone family of interpolation spaces, by definition of the Bessel potential spaces and Example 
\ref{exa:scale}).
By Theorem~\ref{Tanabe2}, the assumptions on $(a^{ij})$ guarantee that $L_t = \nabla\cdot (a_t \nabla)$ is the generator of a propagator $S_{t,s}$ on the full 
range $(\BB_\alpha)_{\alpha \in\R},$ in the sense of Definition \ref{def:part}.

We need to check that the required assumptions on the non-linearities hold. If $k$ is such that $p(k - 4\gamma) 
> n$, then it is easily observed from the Sobolev Embedding Theorem
that for every $\alpha \geq -2\gamma$, multiplication is a smooth operation from $H^{k+2\alpha,p}({\T^n})\times H^{k+2\alpha,p}({\T^n}) \to H^{k+2\alpha,p}({\T^n})$. Therefore, the operator induced by $f$ is smooth from $\BB_\alpha = H^{k+2\alpha,p}({\T^n})$ to itself for every $\alpha \geq 0$. Moreover, since $f$ is a polynomial it is easy to see that it sends bounded sets of 
$H^{k+2\alpha,p}({\T^n})$ to bounded sets for all $\alpha \geq 0$ therefore showing that $f \in \CC^\infty_{0,0}(\BB)$. Similarly, since $H^{k,p}(\T^n) \subset H^{k-4\gamma,p}(\T^n)$ continuously, we see that $p_i$ induce smooth operators from $\BB_\alpha = H^{k+2\alpha,p}({\T^n})$ to itself for every $\alpha \geq -2\gamma$ and that $p_i \in \CC^\infty_{-2\gamma,0}(\BB)$ for each $i=1,\dots ,d$.


We now want to specialize further our results, by introducing a stochastic context for 
\eqref{evolution_equation} (which constitutes an important motivation for introducing a rough paths 
formulation, see the discussion in the introduction).
We have the following.
	
\begin{thm}
	\label{thm:specify}
	 Let $(\Omega ,\mathcal F,\mathbb{P},(\mathcal F_t)_{t\in[0,T]})$ be a filtered probability space satisfying the usual assumptions, let $(B^{i})_{1\leq i\leq d}$ be a multidimensional Brownian motion on $\Omega ,$ for some $d\geq 1$. Let $(k,p) \in \N \times (1,\infty)$ be such that $k>n/p+4/3$. Let $f,p_i :\R \to \R$ be polynomials with coefficients in $H^{k,p}({\T^n})$ for every $i \in \{1,\dots d\}$.  
	 Consider an adapted process $a:\Omega \times[0,T]\to \CC^\infty({\T^n};\LL(\R^n))$ such that for $\mathbb{P}$-a.e. $\omega$, $a(\omega ) \in \CC^{\varrho,\infty}([0,T]\times {\T^n} ; \LL(\R^n))$ satisfies the assumptions of Example \ref{ex:family1} with the coercivity constant $\varkappa(\omega ) > 0$ in \eqref{uniform_ellipticity}. 

Then there exists a stopping time $\tau$ such that the equation
\begin{equation}\label{stochastic_reaction}
\begin{aligned}
du_t(x) &= \nabla\cdot \big(a_t(\omega ,x) \nabla u_t(x)\big) dt + f(u_t(x))dt+\sum_{i=1}^d p_i(u_t(x)) dB^i_t\;,\\
u_0 &\in H_0^{k,p}({\T^n})\;,
\end{aligned}
\end{equation} 
has a weak It\^o solution in the following sense: a stochastic process $u\colon \Omega \times[0,T]\to H^{k,p}({\T^n})$ is adapted and satisfies:
\begin{itemize}
 \item $\mathbb{P}$-almost surely either $\tau = T$, or $\limsup_{t\nearrow \tau }|u_t|_{H^{k,p}}=\infty$;
 \item for every $t\in[0,T]$ and any $\varphi \in \CC^\infty({\T^n})$, we have on $\{t<\tau \}$:
 \begin{multline}
 \label{weak_Cc}
\int_{{\T^n}}(u _t(x)-u _0(x))\varphi (x)dx + \int_0^t\int_{{\T^n}} a_s(x)\nabla u _s(x)\cdot \nabla \varphi(x) dx\,ds 
\\
=\int_0^t\int_{{\T^n}}f(u _s(x))\varphi (x)dx\,ds + \sum_{i=1}^d \int_{0}^t \int_{{\T^n}} p_i(u _s(x))\varphi (x)dx\,dB^i_s\,.
\end{multline}
\end{itemize}

If in addition, the following moment estimate holds true: for every $m \geq 1$ there exists $K_m > 0$ 
\begin{equation}
	\mathbb E\left [\sup_{t\in[0,T]}|a_t(\cdot)|^m_{\CC^{k-1}({\T^n})}\right ] \leq K_m\,,\label{eq:moments}
\end{equation}
then the solution $u\in\CC([0,\tau),H^{k,p}({\T^n}))$ is unique in the class previously defined.\footnote{Note that this does not rule out the possibility of existence of discontinuous in time weak solutions.}
\end{thm}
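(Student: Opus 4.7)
The plan is to reduce Theorem~\ref{thm:specify} to a pathwise application of Theorems~\ref{thm:main} and~\ref{thm:weak}. First enhance the $d$-dimensional Brownian motion $B$ to a rough path $\mathbf{B}=(B,\mathbb{B})$ by setting
\[
\mathbb{B}^{ij}_{t,s}:=\int_s^t (B^i_r-B^i_s)\,dB^j_r\,,
\]
where the iterated integrals are understood in the It\^o sense. Standard Kolmogorov estimates give $\mathbf{B}\in\cC^\gamma([0,T];\R^d)$ almost surely for every $\gamma<1/2$. The hypothesis $k>n/p+4/3$ allows me to fix $\gamma>1/3$ with $k>n/p+4\gamma$; the Sobolev embedding $H^{k-4\gamma,p}(\T^n)\hookrightarrow L^\infty(\T^n)$ then makes multiplication a smooth bounded operation on $H^{k+2\alpha,p}(\T^n)$ for every $\alpha\geq -2\gamma$. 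Setting $\BB_\alpha:=H^{k+2\alpha,p}(\T^n)$, one then checks that $f\in\CC^\infty_{0,0}(\BB)$ and $p_i\in\CC^\infty_{-2\gamma,0}(\BB^d)$, the latter being subcritical since here $\sigma=0<\gamma$.

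For $\mathbb{P}$-a.e.~$\omega$, Example~\ref{ex:family1} together with Theorem~\ref{Tanabe2} ensures that $L_t(\omega)=\nabla\!\cdot(a_t(\omega)\nabla)$ generates a propagator on the full scale $(\BB_\alpha)_{\alpha\in\R}$. Theorem~\ref{thm:main} thus produces pathwise a maximal time $\tau(\omega)$ and a controlled-rough-path mild solution $(u(\omega),F(u)(\omega))$. Adaptedness of $u$ and the stopping-time property of $\tau$ follow from the fact that the Picard iterate depends only on $\mathbf{B}|_{[0,t]}$ and $a|_{[0,t]}$, together with the characterisation of $\tau$ as the first hitting time of $\infty$ by the continuous adapted process $t\mapsto|u_t|_{H^{k,p}}$. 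I then apply Theorem~\ref{thm:weak} with any test function $\varphi\in\CC^\infty(\T^n)\subset\BB_{-\nu}^*$, obtaining
\[
\langle u_t,\varphi\rangle=\langle u_0,\varphi\rangle+\int_0^t\langle L_su_s,\varphi\rangle\,ds+\int_0^t\langle f(u_s),\varphi\rangle\,ds+\sum_i\int_0^t\langle p_i(u_s),\varphi\rangle\,d\mathbf{B}^i_s\,.
\]
Integration by parts in $H^{k-2,p}\times H^{2-k,p'}$ identifies the first integral with $-\int_0^t\!\int_{\T^n}a_s\nabla u_s\cdot\nabla\varphi\,dx\,ds$. For the rough integrals, the scalar integrand $s\mapsto\langle p_i(u_s),\varphi\rangle$ is an adapted continuous path controlled by $B$; since $\mathbb{B}$ is the It\^o lift, the classical consistency between rough and It\^o integration for adapted controlled paths (see \cite[Ch.~5]{friz}) gives
\[
\int_0^t\langle p_i(u_s),\varphi\rangle\,d\mathbf{B}^i_s=\int_0^t\langle p_i(u_s),\varphi\rangle\,dB^i_s\,,
\]
which is precisely~\eqref{weak_Cc}.

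For uniqueness under the moment bound~\eqref{eq:moments}, the difficulty is that Theorem~\ref{thm:main} provides uniqueness only within the class of controlled rough paths, whereas the hypothesis only yields a continuous adapted $u\in\CC([0,\tau),H^{k,p})$ satisfying~\eqref{weak_Cc}. I would proceed by first showing, using the It\^o formula applied to short-time increments of the pairing $\langle u_t,\varphi\rangle$ together with the smoothness of $p_i$, that any such $u$ automatically admits the expansion $u_t-u_s=\sum_i p_i(u_s)\,\delta B^i_{t,s}+r_{t,s}$ with $|r_{t,s}|_{\BB_{-2\gamma}}=O(|t-s|^{2\gamma})$, so that $(u,p(u))$ is controlled in the sense of Definition~\ref{def:controlled}. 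A localisation by the stopping times $\tau_n:=\inf\{t:|u_t|_{H^{k,p}}+|a_t|_{\CC^{k-1}}\geq n\}$ then uses~\eqref{eq:moments} to make all constants in Assumption~\ref{ass:L_t} and in the controlled-path norm uniformly bounded on $[0,\tau_n]$, so that the pathwise uniqueness of Theorem~\ref{thm:main} applies there; letting $n\to\infty$ identifies the solutions up to $\tau$. I expect the promotion from a merely weak It\^o solution to a genuine controlled rough path to be the main technical obstacle, as it requires a delicate interplay between stochastic calculus, elliptic regularity of the random propagator and the measurability of the rough-path lift.
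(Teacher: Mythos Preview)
Your existence argument is essentially the paper's: enhance $B$ to the It\^o rough path, fix $\gamma\in(1/3,1/2)$ with $k>n/p+4\gamma$, apply Theorem~\ref{thm:main} pathwise on the scale $\BB_\alpha=H^{k+2\alpha,p}(\T^n)$, invoke Theorem~\ref{thm:weak} with $\varphi\in\CC^\infty(\T^n)\subset\BB_{-1}^*$, and identify the rough integrals with It\^o integrals by adaptedness. Nothing to add there.

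For uniqueness you correctly isolate the obstacle---promoting a merely continuous weak It\^o solution to a controlled rough path---but the method you sketch diverges from the paper and, as stated, is not enough. The paper does \emph{not} use the It\^o formula on short increments, nor does it localise the coefficient process $a$. Instead it localises only by $\tau_\varrho=\inf\{t:|u_t|_0\geq\varrho\}$ and then proves, for the stopped process $u^\varrho$, that the remainder $R^{u^\varrho}_{t,s}=\delta u^\varrho_{t,s}-\sum_i p_i(u^\varrho_s)\delta B^i_{t\wedge\tau_\varrho,s\wedge\tau_\varrho}$ lies in $\CC_2^{2\gamma-\varepsilon}(\BB_{-2\gamma})\cap\CC_2^{\gamma-\varepsilon}(\BB_{-\gamma})$ almost surely. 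The argument is probabilistic: one first applies BDG to $\langle\delta M_{t,s},\varphi\rangle$ for the martingale part $M_t=\sum_i\int_0^{t\wedge\tau_\varrho}p_i(u_s)\,dB^i_s$, then Kolmogorov's criterion over a countable dense family of test functions to obtain $M\in\CC^\gamma(\BB_0)$ pathwise; next one writes $u^\varrho-M=v_{\cdot\wedge\tau_\varrho}$ where $v$ solves the random-coefficient parabolic equation $\partial_t v=L_tv+L_tM$, and uses the mild formula together with the bound $|L_t h|_{-1}\lesssim |a_t|_{\CC^{k-1}}|h|_0$ to get $u^\varrho\in\CC^{\gamma-\varepsilon}(\BB_{-\gamma})$. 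Finally, a \emph{two-parameter} Kolmogorov criterion applied to $\langle R^{u^\varrho}_{t,s},\varphi\rangle$---where the moment hypothesis~\eqref{eq:moments} enters directly in bounding $\mathbb E[A^{2\gamma m}]$ with $A=\sup_t|a_t|_{\CC^{k-1}}$---yields the required $2\gamma-\varepsilon$ regularity of $R^{u^\varrho}$ in $\BB_{-2\gamma}$.

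Your proposed localisation $\tau_n=\inf\{t:|u_t|+|a_t|_{\CC^{k-1}}\geq n\}$ would make constants bounded pathwise, but this does not by itself produce H\"older regularity of the It\^o integrals: stochastic integrals are not pathwise bounded in H\"older norm without passing through moment estimates and Kolmogorov. So even with your localisation you would be forced back to BDG $+$ Kolmogorov, and the moment bound~\eqref{eq:moments} would still be consumed in the Kolmogorov step rather than absorbed by the stopping time. The splitting $u=v+M$ and the two-parameter Kolmogorov argument are the concrete ingredients you are missing.
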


The existence part is a simple consequence of Theorem~\ref{thm:weak} and the fact that in this context $\varphi \in \BB^*_{-1}$ for $\varphi \in 
\CC^\infty({\T^n})$. The uniqueness statement is a bit more involved and requires some probabilistic tools like Kolmogorov extension theorem. 
A full proof will be presented in Section~\ref{sect:weak}. We want to point out once again that the importance of the above theorem comes 
from the fact that the equation~\eqref{stochastic_reaction} can not be solved as a mild It\^o equation 
because in this case the propagator $S_{t,s}$ is itself random, thus making the stochastic integrand 
$S_{t,s}p_i(u_s)$ non adapted.

Note that the results of Example \ref{ex:family1} and Section~\ref{subsec:specified} can be generalized to general smooth bounded domains $\OO \subset \R^n$ instead of the torus $\T^n$, considering for instance (homogeneous) Dirichlet boundary conditions. In order to replicate the above results one can construct a scale $(\BB_\alpha)_{\alpha \in \R}$ similarly as in Example \ref{exa:hilbert} for the  Dirichlet Laplacian $(L,D(L)) = (\Delta_D, H^{2,p}(\OO)\cap H^{1,p}_0(\OO))$. In order to simplify the presentation, we have decided to postpone talking about general domains until Section~\ref{subsec:CH}, where we look at the Cahn-Hilliard equation with Dirichlet boundary conditions. The case of homogeneous Neumann boundary conditions could be handled in a similar fashion.
	
\section{A multiplicative sewing Lemma and some by-products}
\label{sec:MSL}
Needless to say, the Sewing Lemma is a central result in rough paths theory. It is the core argument that permits to deal with the integration of controlled paths \cite{gubinelli2004} (not even mentioning its implicit use in the original `multiplicative functional'-formalism by Lyons \cite{lyons98}).
There is a vast literature on extensions of this result, e.g.\ in the context of reduced increments associated to a semigroup \cite{gubinelli2010,deya2012rough}, or flows \cite{bailleul2019rough}, see also the recent works \cite{le2020stochastic,brault2019nonlinear,brault2019nonlinear2,brault2020nonlinear3}. We remark that the original statement in \cite{feyel2008non} is fairly general and  applies in particular to non-commutative settings.
In this section, we introduce a new version of this result which fits well in multiplicative, infinite-dimensional contexts.

\subsection{The main result}
	We are going to present a version of a multiplicative sewing lemma which can be considered as a generalization of the non-commutative sewing lemma by Feyel, De La Pradelle, and Mokobodzki \cite{feyel2008non}.
	In the sequel, we denote by $(\MM,\circ)$ a monoid (for convenience we will mostly omit to write $\circ$).
	For a function $\mu \colon \Delta_2 \to \MM$ and an arbitrary partition $\pi = \{s =t_0 < t_1 < \dots < t_k = t\}$ of $[s,t]$ set: 
	\begin{equation*}
		\mu^\pi_{t,s} = \prod_{i=0}^{k-1} \mu_{t_{i+1},t_i}\,.
	\end{equation*}
	
	\begin{defn}\label{def:topologies} Let $(\MM, \circ)$ be a monoid.
			We call a triple $(\MM, |\cdot|, d)$ a submultiplicative monoid if $d$ is a metric on $\MM$ and a function $|\cdot| : \MM \to \R_+$ is such that for all $a,b,c \in \MM$:
			\begin{equation}
			\label{submult}
			d(ac,bc) \leq d(a,b) |c|,\quad\text{and}\quad d(ca,cb) \leq  |c| d(a,b)\,.
			\end{equation}
			Moreover, we assume that for all $C \in \R_+$ the sets $B_C = \{a \in \MM:\, |a| \leq C \}$ are complete with respect to the metric $d$.
	\end{defn}
	\begin{rem}
	\label{topology2}
	Let $\MM$ be a submultiplicative monoid.
		As a consequence of \eqref{submult}, the multiplication is continuous with respect to $d$ on the sets $B_C$.
		Indeed if $|b_n| \leq C$ and $d(a_n,a) \to 0$ and $d(b_n,b) \to 0$ as $n \to \infty$ then:
		\begin{align*}
			d(a_nb_n, ab) \leq d(a_n,a)|b_n| + |a| d(b_n,b) \leq C d(a_n,a) + |a| d(b_n,b) \to 0\, ,
		\end{align*}
		as $n \to \infty$. The same is true if instead of $|b_n| \leq C$ we have $|a_n| \leq C$.
	\end{rem}
	
	\begin{defn}\label{def:monoid}
Let $\MM$ be a submultiplicative monoid.
We say that a function $\mu :\Delta_2\to \MM$ is 
\begin{enumerate}[label=(\roman*)]
 \item \emph{multiplicative} if for every $(t,u ,s)\in\Delta_3$:
			\begin{equation*}
				\mu _{t,s} = \mu _{t,u }\mu _{u, s}.
			\end{equation*}
 \item \emph{almost-multiplicative} if there exists a control $\omega :\Delta _2 \to\R_+$ and $z>1$ such that for each $(t,u ,s)\in\Delta _3:$
	\begin{equation}
	\label{hyp_mu0}
	d(\mu _{t,s} ,\mu _{t,u }\mu _{u, s})\leq \omega ^z(t,s)\,.
	\end{equation}
\end{enumerate}
Next, let $\epsilon : \Delta_2 \to \R_+$ be such that $\epsilon (t,s)$ is continuous, increasing in $t$ and decreasing in $s$.
We say that
\begin{enumerate}[label=(\roman*)]
\setcounter{enumi}{2}
 \item \label{submult:2}
 $\mu $ has \emph{moderate growth} with growth rate $\epsilon $ if
 \begin{equation}\label{partition_growth}
|\mu^\pi_{t,s}| \leq \epsilon(t,s),\quad \text{for each}\enskip (t,s)\in\Delta_2\;,
\end{equation}
for every $(t,s) \in \Delta_2,$ independently of the choice of partition $\pi$ of $[s,t]$. 
\end{enumerate}

We denote by $\mathrm{BG}_2(0,T;\MM)$ the set of all functions $\mu: \Delta_2\to \MM$ with moderate growth for some $\epsilon$ as above.
\end{defn}

	\begin{thm}[Multiplicative sewing Lemma] \label{thm:MSL}
		Let $(\MM, |\cdot|, d)$ be a submultiplicative monoid with unit $\mathds{1}.$ Let $\epsilon : \Delta_2 \to 
		\R_+$ be increasing in the first argument and decreasing in the second, and let $\mu \in 
		\mathrm{BG}_{2}(0,T;\MM)$ with a growth rate $\epsilon$. Assume that there exists a control $\omega$ 
		and a constant $z>1$ so that \eqref{hyp_mu0} holds.
		
		Then, there exists a unique multiplicative $\varphi \in \mathrm{BG}_2(0, T; \MM)$ such that for every $(t,s)\in\Delta_2$:
		\begin{equation} \label{ineq:mu_phi0}
			d(\varphi _{t,s}, \mu_{t,s}) \lesssim_{z,T} \omega^{z}(t,s). 
		\end{equation}
		The function $\varphi$ has the same growth rate $\epsilon$ and for all $(t,s) \in \Delta_2,$ for every sequence of partitions $\pi_n = \{s =t_0 < t_1 < \dots < t_n = t\}$ with mesh-size $|\pi_n| = \max_i |t^n_{i+1} - t^n_i| \to 0$ as $n \to \infty$ we have:
		\begin{equation}\label{partiotion}
			\varphi_{t,s} = d\,\text{-}\lim_{n\to\infty} \prod_{i=0}^{k_n-1} \mu_{t^n_{i+1},t^n_i}\;.
		\end{equation}
		Moreover, if $\mu_{t,t} = \mathds{1}$ for all $t \in [0,T]$ then $\varphi_{t,t} = \mathds{1}$ and if in addition $\mu :\Delta_2 \to (\MM,d)$ is continuous then so is $\varphi :\Delta_2 \to (\MM,d)$.
	\end{thm}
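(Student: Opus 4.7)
The plan is to mimic the proof of the classical additive Sewing Lemma, but in the multiplicative, non-commutative setting of $(\MM,|\cdot|,d)$. More precisely, for each $(t,s) \in \Delta_2$ and any sequence of partitions $\pi_n$ of $[s,t]$ with $|\pi_n|\to 0$, I will show that the products $\mu^{\pi_n}_{t,s}$ form a Cauchy sequence inside the $d$-complete ball $B_{\epsilon(t,s)}$ and hence converge to the sought-after $\varphi_{t,s}$. All remaining properties should follow from this representation.

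The heart of the matter is a one-point removal estimate. Given $\pi = \{s=t_0<\cdots <t_n=t\}$ and an interior index $i$, splitting the product around $t_i$ and applying the two-sided submultiplicativity~\eqref{submult} together with~\eqref{hyp_mu0} yields
\[
d(\mu^\pi_{t,s},\mu^{\pi\setminus\{t_i\}}_{t,s}) \leq \epsilon(t,s)^2\,\omega^z(t_{i+1},t_{i-1}),
\]
where the two factors of $\epsilon$ come from bounding the untouched side-products. Superadditivity forces $\sum_i \omega(t_{i+1},t_{i-1}) \leq 2\omega(t,s)$, so some index satisfies $\omega(t_{i+1},t_{i-1}) \leq 2\omega(t,s)/(n-1)$. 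Iteratively removing such optimal points, and exploiting the convergence of the series $\sum_k k^{-z}$ (since $z>1$), produces the key estimate $d(\mu^\pi_{t,s},\mu_{t,s}) \lesssim_{z} \epsilon(t,s)^2\,\omega^z(t,s)$. The Cauchy property then follows by comparing $\pi$ with $\pi\cup\pi'$: applying the key estimate on each sub-interval of $\pi$ to the restriction of $\pi\cup\pi'$ there, and telescoping via~\eqref{submult}, gives $d(\mu^\pi_{t,s},\mu^{\pi\cup\pi'}_{t,s}) \lesssim_z \epsilon(t,s)^4\,\omega(t,s)\,\max_i \omega(t_{i+1},t_i)^{z-1}$, which vanishes as $|\pi|\to 0$ by continuity of $\omega$ at the diagonal.

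The remaining statements are consequences of this construction. The growth bound $|\varphi_{t,s}|\leq \epsilon(t,s)$ is inherited from the $d$-closedness of the complete ball $B_{\epsilon(t,s)}$, and~\eqref{ineq:mu_phi0} is the key estimate passed to the limit. Multiplicativity at $(t,u,s) \in \Delta_3$ is obtained by restricting the partitions to contain $u$, factoring $\mu^{\pi_n}_{t,s} = \mu^{\pi_n\cap[u,t]}_{t,u}\,\mu^{\pi_n\cap[s,u]}_{u,s}$, and passing to the limit using the continuity of multiplication on bounded sets from Remark~\ref{topology2}. Uniqueness within $\mathrm{BG}_2$ follows from an analogous telescoping between $\varphi$ and any multiplicative competitor $\tilde\varphi$ with growth rate $\tilde\epsilon$ satisfying the same approximation bound: refining via a vanishing-mesh partition yields $d(\varphi_{t,s},\tilde\varphi_{t,s}) \lesssim \epsilon(t,s)\tilde\epsilon(t,s)\,\omega(t,s)\,\max_i\omega(t_{i+1},t_i)^{z-1}\to 0$. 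The unit claim $\varphi_{t,t}=\mathds{1}$ is read off~\eqref{partiotion}, while continuity of $\varphi$, under the extra hypothesis that $\mu$ be continuous, follows by combining~\eqref{ineq:mu_phi0} with $\mu_{t_0,t_0}=\mathds{1}$ to deduce $d(\varphi_{t,t_0},\mathds{1})\to 0$ as $t\to t_0$, and then exploiting the multiplicativity of $\varphi$ on an interval enclosing both $(t,s)$ and $(t_0,s_0)$.

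The main obstacle I anticipate is the bookkeeping in the one-point removal and telescoping estimates: each application of~\eqref{submult} must be organized so that only uniform factors of $\epsilon(t,s)$ appear on either side, rather than compounding multiplicatively across the $n$ removals. This is exactly where the choice of erasing the point with minimal surrounding $\omega$-neighborhood pays off, producing a convergent series ultimately controlled by $\omega^z(t,s)$ thanks to $z>1$.
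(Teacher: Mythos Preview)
Your proposal is correct and follows essentially the same route as the paper's proof: the one-point removal with the optimally chosen index (via superadditivity of $\omega$), the iterated maximal inequality with a $\zeta(z)$-type series, the Cauchy argument by comparing with a common refinement and telescoping, and the uniqueness by interleaving $\varphi$ and $\tilde\varphi$ along a fine partition. The bookkeeping concern you flag is exactly the crux, and your handling of it (at most two factors of $\epsilon(t,s)$ per removal, four in the refinement comparison) matches the paper's.
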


	\begin{proof}
		Our proof is reminiscent to that of the additive Sewing Lemma in \cite{friz}. 
		
		\textit{Existence:}
		Let $(t,s) \in \Delta_2$ and $\pi=\{s =t_0 < t_1 < \dots < t_k = t\}$ be a partition of $[s,t]$. Since $\omega$ is a control there exists $0 < l < k$ such that:
		\begin{align*}
			\omega(t_{l+1},t_{l-1}) \leq \frac{2\, \omega(t,s)}{k}\;.
		\end{align*}
		This is true since assuming the opposite would contradict the superadditivity of $\omega$.
		Denote by $\hat{{\pi}}$ the partition of $[s,t]$ obtained by removing the point $t_l$ from $\pi$. Then using~\eqref{hyp_mu0} and submultiplicativity:
		\begin{align*}
			d(\mu^{\hat{{\pi}}}_{t,s}, \mu^{\pi}_{t,s}) &\leq |\prod_{i = l+1}^{k} \mu_{t_{i+1},t_i}\;|\, d\big(\mu_{t_{l+1},t_{l-1}}, \mu_{t_{l+1},t_{l}}\circ\mu_{t_{l},t_{l-1}}\big)\, |\prod_{i = 0}^{l-2} \mu_{t_{i+1},t_i}| \nonumber\\
			& \leq \epsilon(t, t_{l+1}) \omega^z(t_{l+1},t_{l-1}) \epsilon(t_{l-1},s) \leq \epsilon^2(t,s) 2^z \omega^z(t,s)k^{-z}\,.
		\end{align*}
		Repeating this procedure recursively until we arrive at trivial partition $\pi_0 = \{s,t\}$ we obtain the so called maximal inequality
		\begin{equation}\label{maximal}
			\sup_{\pi} d(\mu_{t,s}, \mu^\pi_{t,s}) \leq 2^z \zeta(z) \epsilon^2(t,s) \omega^z(t,s)\;,
		\end{equation}
		where the supremum is taken over all partitions of $[s,t]$ and $\zeta(z) = \sum_{k = 1}^{\infty} k^{-z}$ is the Riemann zeta function. We claim that to show the existence of the limit~\eqref{partiotion} and its independence of the sequence of the partitions it suffices to show that
		\begin{equation}\label{cauchy}
			\sup_{|\pi|\vee |\pi'| < \varepsilon}\, d(\mu^\pi_{t,s}, \mu^{\pi'}_{t,s}) \to 0\quad\text{as}\quad \varepsilon\to 0\;.
		\end{equation}
		Indeed, this would imply that $\mu^{\pi_n}_{t,s}$ is Cauchy for every sequence of partitions $\pi_n$ with 
		$|\pi_n| \to 0.$ Then since $|\mu^{\pi_n}_{t,s}| \leq \epsilon(t,s)$ the completeness of the sets $\{a \in 
		\MM: |a| \leq C \}$ with respect to the metric $d$ would imply that $\mu^{\pi_n}$ converges to some 
		element $\varphi_{t,s}$ such that $|\varphi_{t,s}| \leq \epsilon(t,s)$. The independence of the limit from 
		the sequence of partitions is obvious once~\eqref{cauchy} holds.
		
		To show~\eqref{cauchy} we assume without loss of generality that $\pi'$ is a refinement of $\pi$ (since 
		otherwise we can simply use the triangle inequality with the term $\mu^{\pi \cup \pi'}_{t,s}$). If $\pi = \{s =t_0 < t_1 < \dots < t_k = t\}$ we define
		$$M^l_{t,s} = \prod_{i = l}^{k-1} \mu^{\pi' \cap [t_i, t_{i+1}]}_{t_{i+1},t_i} \prod_{i = 0}^{l-1} 
		\mu_{t_{i+1},t_i}\;.$$
		Note that $M^k_{t,s} = \mu^\pi_{t,s}$ and $M^0_{t,s} = \mu^{\pi'}_{t,s}$. Then, by the triangle inequality, 
		submultiplicativity and the maximal inequality~\eqref{maximal}:
		\begin{align*}
			d(\mu^\pi_{t,s}, \mu^{\pi'}_{t,s}) &\leq \sum_{i = 0}^{k-1} d(M^i_{t,s}, M^{i+1}_{t,s}) \leq \epsilon^2(t,s) \sum_{i = 0}^{k-1} d(\mu_{t_{i+1},t_i}, \mu^{\pi' \cap [t_i, t_{i+1}]}_{t_{i+1},t_i}) \\ 
			& \leq 2^z \epsilon^4(t,s) \zeta(z) \sum_{i = 0}^{k-1} \omega ^z(t_{i+1},t_i)\\ 
			&\lesssim_{z,T} \omega(t,s) \max_i\{\omega^{z-1}(t_{i+1},t_i)\}\,\to 0\quad\text{as}\quad |\pi| \to 0\;,
		\end{align*}
		thus showing~\eqref{cauchy}.
		
		To show~\eqref{ineq:mu_phi0} it is enough to take the limit in \eqref{maximal} as $|\pi| \to 0$. Note that by Remark~\ref{topology2}, the multiplication of $\mu^{\pi_1}_{t,r}$ with $\mu^{\pi_2}_{r,s}$ is continuous with respect to $d$. This, together with the independence of the limit in~\ref{partiotion} with respect to the sequence of partitions, immediately implies the multiplicativity of $\varphi$ and thus $\varphi \in \mathrm{BG}_2(0,T; \MM)$.
		
		If $\mu_{t,t} = \mathds{1}$ then it is clear from \eqref{ineq:mu_phi0} that $\varphi_{t,t} = \mathds{1}$. One can also use~\eqref{ineq:mu_phi0} to show that this together with continuity of $\mu:\Delta_2 \to (\MM,d)$ will imply continuity of $\varphi:\Delta_2 \to (\MM,d)$. Details are left to the reader.
		
		\bigskip

		\textit{Uniqueness}:
		Let $\psi \in \mathrm{BG}_{2}(0,T;\MM)$ be another multiplicative map satisfying 
		$$d(\psi_{t,s}, \mu_{t,s}) \lesssim_{z,T} \omega^z(t,s)\;,$$
		then by the triangle inequality:
		$$d(\varphi_{t,s}, \psi_{t,s}) \lesssim_{z,T}\omega^z(t,s)\;.$$ 
		Let $\tilde\epsilon$ be a growth rate of $\psi$ and denote $\epsilon_0 = 1 + \tilde\epsilon +\epsilon$.
		Let $\pi = \{s =t_0 < t_1 < \dots < t_k = t\}$ be an arbitrary partition and define now for $1 \leq l \leq k-1$, $D^l_{t,s} = \phi_{t,t_{l}}\psi_{t_ls}$.
		We also denote $D^{0}_{t,s} = \varphi_{t,s}$ and $D^{k}_{t,s} = \psi_{t,s}$. Then 
		\begin{align*}
			d(\varphi_{t,s}, \psi_{t,s}) &\leq \sum_{l = 0}^{k-1} d(D^l_{t,s}, D^{l+1}_{t,s}) \leq \sum_{l = 0}^{k-1} |\varphi_{t,t_{l+1}}| d(\varphi_{t_{l+1},t_l}, \psi_{t_{l+1},t_l}) |\psi_{t_l,s}|\\
			& \lesssim \epsilon^2_0(t,s) \sum_{l = 0}^{k-1} \omega^z(t_{l+1}, t_l)\;,
		\end{align*}
		which converges to zero as $|\pi| \to 0$ similarly as before. Thus, we must have $\varphi = \psi$ which concludes the proof.
	\end{proof}
	
	\begin{rem}
		\label{rem:feyel}
	Though our result is new, it is similar in spirit to \cite[Theorem~10]{feyel2008non}, which itself is a 
	generalization of the construction of the whole signature from the lower order `iterated integrals' by Lyons 
	in \cite[Thm 2.2.1]{lyons98}\footnote{One could also show that \cite[Thm 2.2.1]{lyons98} follows from our 
	version of the multiplicative sewing lemma, Theorem \ref{thm:MSL}, by using the so-called  `neo-classical 
	inequality' to prove the growth condition \eqref{partition_growth}.}.
	
	In \cite{feyel2008non}, the authors make the assumption that the function $|\cdot|$ is Lipschitz 
	continuous with respect to the distance $d$, while in our setting this assumption is replaced by the 
	growth condition~\eqref{partition_growth} and the assumption that the closed balls of $|\cdot|$ are 
	complete with respect to $d$. This becomes a necessary modification if one wants to apply this sewing 
	lemma on infinite dimensional spaces since as we will see later, in most examples $|\cdot|$ will induce a 
	stronger topology than the one generated by $d$. It implies that the assumption of Lipschitz continuity as 
	in \cite{feyel2008non} can no longer be satisfied in general. 
	\end{rem}

	An important example of submultiplicative monoids is provided by some class of (unital) \textit{Banach algebras} $(\mathcal A,|\cdot|)$ (recall that by definition $|ab|\leq |a||b|$ for any $a,b\in\mathcal A$).
	In this case, the property \eqref{partition_growth} can be replaced by a suitable exponential growth assumption, which is easier to verify in practice.

	\begin{cor}[Multiplicative Sewing Lemma in a Banach Algebra]
	\label{cor:mult}
		Let $(\A, |\cdot|)$ be a Banach algebra with a unit $\mathds{1}$ and let $p$ be a norm on $\A$ (possibly different from $|\cdot|$). Let $d_p$ be the metric induced by $p$ i.e. $d_p(a,b) = p(a-b)$, and assume that the triple $(\A, |\cdot|, d_p)$ is a submultiplicative monoid. 
		
		Let $\mu \colon \Delta_2 \to (\A,p)$ be continuous such that $\mu_{t,t} = \mathds{1}$ for all $0 \leq t \leq T$. Assume that there exists a control  $\bar{\omega}$ so that
		\begin{equation}\label{exponential_control1}
			|\mu_{t,s}| \leq \exp\bar{\omega}(t,s),\quad\text{for every $(t,s)\in\Delta_2$.}
		\end{equation}
		Assume also that there exists another control $\omega$ and $z>1$ such that
		\begin{equation} \label{hyp_mu}
		p(\mu _{t,s} - \mu _{t,u }\mu _{u ,s}) \lesssim_{z,T} \omega^{z}(t,s)\,,
		\end{equation} 
		for every $(t,u ,s)\in\Delta_2$. Then, there exists a unique multiplicative and continuous $\varphi \colon \Delta_2 \to (\A, p)$ such that for every $(t,s)\in\Delta_2:$
		\begin{align}
		|\varphi_{t,s}| &\leq \exp\bar{\omega}(t,s)\,,\label{exponential_control2}\\
		p(\varphi _{t,s} -\mu _{t,s})	&\lesssim_{z,T} \omega^{z}(t,s)\,.\label{ineq:mu_phi}
		\end{align}
	\end{cor}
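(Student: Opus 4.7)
The plan is to reduce Corollary \ref{cor:mult} to an application of Theorem \ref{thm:MSL} with the submultiplicative monoid $(\A, |\cdot|, d_p)$. The triangle inequality hypothesis \eqref{hyp_mu} is exactly the almost-multiplicativity condition \eqref{hyp_mu0} phrased in terms of the metric $d_p$, so once I check the moderate growth condition \eqref{partition_growth}, the conclusion will follow directly.

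The key step is thus to verify that $\mu \in \mathrm{BG}_2(0,T; \A)$, with the explicit growth rate $\epsilon(t,s) := \exp \bar\omega(t,s)$. Given any partition $\pi = \{s = t_0 < \dots < t_k = t\}$, submultiplicativity of the algebra norm together with \eqref{exponential_control1} yields
\begin{equation*}
|\mu^\pi_{t,s}| = \Bigl|\prod_{i=0}^{k-1}\mu_{t_{i+1},t_i}\Bigr| \leq \prod_{i=0}^{k-1}|\mu_{t_{i+1},t_i}| \leq \exp\Bigl(\sum_{i=0}^{k-1}\bar\omega(t_{i+1},t_i)\Bigr) \leq \exp\bar\omega(t,s),
\end{equation*}
where the last inequality uses the superadditivity of the control $\bar\omega$. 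Moreover, $\epsilon(t,s) = \exp\bar\omega(t,s)$ is continuous, increasing in $t$ and decreasing in $s$, as required by Definition \ref{def:monoid}\ref{submult:2}.

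With moderate growth established, Theorem \ref{thm:MSL} applies and produces a unique multiplicative map $\varphi \in \mathrm{BG}_2(0,T;\A)$ sharing the same growth rate $\epsilon$, which immediately gives \eqref{exponential_control2}, together with the bound $d_p(\varphi_{t,s}, \mu_{t,s}) \lesssim_{z,T} \omega^z(t,s)$, i.e., \eqref{ineq:mu_phi}. The continuity of $\varphi \colon \Delta_2 \to (\A, p)$ and the normalization $\varphi_{t,t} = \mathds{1}$ follow directly from the last assertion of Theorem \ref{thm:MSL}, since by hypothesis $\mu$ is continuous in $d_p$ and $\mu_{t,t} = \mathds{1}$ for all $t \in [0,T]$. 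Uniqueness of $\varphi$ within the class of multiplicative elements of $\mathrm{BG}_2(0,T;\A)$ satisfying \eqref{ineq:mu_phi} is inherited from the uniqueness part of Theorem \ref{thm:MSL}. No step here is expected to be difficult; the only mildly delicate point is recognizing that the exponential growth assumption \eqref{exponential_control1}, which is stated only for single increments, automatically propagates to arbitrary partitions via the interplay of submultiplicativity and superadditivity of $\bar\omega$.
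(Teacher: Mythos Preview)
Your proposal is correct and follows essentially the same route as the paper: reduce to Theorem~\ref{thm:MSL} by verifying the moderate growth condition via submultiplicativity of the algebra norm and superadditivity of $\bar\omega$, then read off the conclusions. The only difference is that you spell out the continuity, normalization, and uniqueness consequences slightly more explicitly than the paper does.
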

	\begin{proof}
		We can conclude from Theorem~\ref{thm:MSL} once we show $\mu \in \mathrm{BG}_2(0,T; \A)$ and that one can take a growth rate for $\mu$ to be $\epsilon(t,s) := \exp\bar{\omega}(t,s)$. For this we use submultiplicativity of the norm $|\cdot|$ to deduce for every partition $\pi = \{s =t_0 < t_1 < \dots < t_k = t\}$ 
		$$|\mu^\pi_{t,s}| \leq \prod_{i = 0}^{k-1} |\mu_{t_{i+1},t_i}| \leq \prod_{i = 0}^{k-1} \exp\bar{\omega}(t_{i+1},t_i) = \exp\big\{\sum_{i = 0}^{k-1}\bar\omega(t_{i+1},t_i)\big\} \leq \exp\{\bar{\omega}(t,s)\},$$
		which concludes the proof.
	\end{proof}

\subsection{A new proof of Tanabe/Sobolevskii's Theorem (proof of Theorem \ref{thm:tanabe})}

We start with a lemma.
	\begin{lem}\label{topology3}
		Let $(X,|\cdot|_X), (Y,|\cdot|_Y)$ be two reflexive Banach spaces such that $X\subseteq Y$ and $|x|_Y \leq |x|_X$ for all $x \in X$. Define $\A = \LL(X)\cap \LL(Y)$ with the norm 
		$$|\cdot|_\A = \max \{|\cdot |_{\LL(X)},  |\cdot |_{\LL(Y)}\}\,,$$
		and seminorm $p(\cdot) = |\cdot|_{\LL(X,Y)}$, then $(\A, |\cdot|_\A, p)$ is a submultiplicative monoid. 
	\end{lem}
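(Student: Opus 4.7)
The plan is to verify the three conditions of Definition~\ref{def:topologies}: the monoid structure of $(\A,\circ)$, submultiplicativity~\eqref{submult} with respect to $|\cdot|_\A$ and $d_p(a,b):=p(a-b)$, and completeness of each ball $B_C = \{a \in \A : |a|_\A \leq C\}$ under $d_p$. The monoid property is immediate since $\LL(X)\cap\LL(Y)$ is closed under composition and contains the identity. For submultiplicativity, the factorisation $ac-bc=(a-b)c$ combined with viewing $c$ as a bounded operator on $X$ will give $p(ac-bc)\leq p(a-b)|c|_\A$; symmetrically $p(ca-cb)\leq |c|_\A p(a-b)$ by using $c\in\LL(Y)$ on the left factor of $c(a-b)$. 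Neither verification uses reflexivity.

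The substantive step will be completeness. Given a $d_p$-Cauchy sequence $(a_n)\subset B_C$, completeness of the Banach space $\LL(X,Y)$ yields a limit $T\in\LL(X,Y)$. The task is then to exhibit $\tilde T\in\A$ with $|\tilde T|_\A\leq C$ and $\tilde T|_X=T$, so that automatically $d_p(a_n,\tilde T)\to 0$. The plan is to invoke a Banach--Alaoglu-type argument: since $Y$ is reflexive, the ball $\{b\in\LL(Y):|b|_{\LL(Y)}\leq C\}$ is compact in the weak operator topology, realised as a closed subset of the product $\prod_{y\in Y}\{z\in Y:|z|_Y\leq C|y|_Y\}$, which is compact by weak compactness of each factor (reflexivity of $Y$) and Tychonoff. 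Extracting a WOT-convergent subnet $a_{n_\alpha}\to\tilde T$ produces $\tilde T\in\LL(Y)$ with $|\tilde T|_{\LL(Y)}\leq C$, and uniqueness of weak limits in $Y$ combined with the norm convergence $a_{n_\alpha}x\to Tx$ forces $\tilde Tx=Tx$ for all $x\in X$. To conclude that $\tilde T$ also lies in $\LL(X)$ with $|\tilde T|_{\LL(X)}\leq C$, I would use reflexivity of $X$ a second time: for $x\in X$, the net $(a_{n_\alpha}x)$ is bounded in $X$ by $C|x|_X$ and admits a further subnet converging weakly in $X$ to some $z\in X$ with $|z|_X\leq C|x|_X$ by lower semicontinuity of the $X$-norm; transferring this weak convergence to $Y$ via the continuous inclusion identifies $z$ with $\tilde Tx$ and yields the desired $X$-bound.

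The main obstacle will be this completeness step. Cauchy-ness is available only in the weaker seminorm $p$, so the limit $T$ is \emph{a priori} defined only as an operator from $X$ to $Y$; both reflexivity hypotheses — on $X$ and on $Y$ — will be essential in order to upgrade it to an element of $\A$ with the required norm bound. If one additionally assumed that $X$ is dense in $Y$, as happens in the application to Theorem~\ref{thm:tanabe}, one could bypass the operator-level compactness argument: from the uniform bound $|a_n|_{\LL(Y)}\leq C$ together with the convergence of $(a_n x)$ in $Y$ for $x\in X$, a direct approximation argument would show $(a_n y)$ is Cauchy in $Y$ for every $y\in Y$. Since the lemma does not presuppose density, however, the net-compactness route seems to be the cleanest one available.
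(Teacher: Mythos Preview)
Your proposal is correct and follows essentially the same approach as the paper: both verify submultiplicativity by the obvious factorisations, take the $\LL(X,Y)$-limit $T$ of the Cauchy sequence, and then use reflexivity of $Y$ (WOT-compactness of the ball in $\LL(Y)$) and of $X$ to upgrade $T$ to an element of $\A$ with the required norm bound. The only cosmetic difference is that the paper handles the $\LL(X)$-bound at the operator level (WOT-compactness of the unit ball of $\LL(X)$, citing \cite{choi2008locally}) rather than pointwise as you do, but the arguments are interchangeable.
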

	\begin{proof}
		The fact that $(\A, |\cdot|_\A)$ is an algebra is trivial. For submultiplicativity of $p$:
		\begin{align*}
			p(ab) &= |ab|_{\LL(X,Y)} \leq |a|_{\LL(Y,Y)} |b| _{\LL(X,Y)} \leq |a|_\A p(b)\;,\\
			p(ab) &= |ab|_{\LL(X,Y)} \leq |a|_{\LL(X,Y)} |b| _{\LL(X,X)} \leq p(a)|b|_\A\,.
		\end{align*}
		Now, without loss of generality let us show that the unit ball $B = \{a \in \A: |a|_\A \leq 1 \}$ is complete with respect to $p$. Let $(a_n)$ be a Cauchy sequence for $p$ with $|a_n|_\A \leq 1$ for every $n\geq 0$. By completeness of $\LL(X,Y)$
		we infer the existence of a limit $a\in \LL(X,Y)$ such that $p(a_n-a)\to0$. It remains to show that $a$ belongs to $B$. Since the operator-norm topology is stronger that the weak operator topology, we have that for all $x\in X,y^*\in Y^*$:
		\begin{equation*}
			\langle y^*,a_n x\rangle \to \langle y^*,a x\rangle\,.
		\end{equation*}
		The fact that $Y$ is reflexive implies that the unit ball in $\LL(Y)$ is compact with respect to the weak operator topology (see \cite[Thm 2.19]{choi2008locally}), therefore since $|a_n|_{\LL(Y)} \leq 1$ there exists $b \in \LL(Y)$ such that $|b|_{\LL(Y)} \leq 1$ and for each $y\in Y,y^*\in Y^*$, it holds
		\begin{equation*}
			\langle y^*,a_n y\rangle \to \langle y^*,b y\rangle\,.
		\end{equation*}
		Now, $X \subset Y$, so that convergence in the weak operator topology in $\LL(Y)$ implies convergence in the weak operator topology in $\LL(X,Y)$ and hence $a = b$, thus $|a|_{\LL(Y)} \leq 1$. Similarly, using the fact that $Y^* \subset X^*$ we have that the weak operator topology in $\LL(X,Y)$ is stronger than the weak operator topology in $\LL(X)$, thus $|a|_{\LL(X)} \leq 1$ and therefore $a \in B$.
	\end{proof}

With this lemma, we can now proceed to the proof of one of our main results.
	
\begin{proof}[Proof of Theorem \ref{thm:tanabe}]
We introduce the Banach algebra $\A = \LL(\mathcal{X}_0)\cap \LL(\mathcal{X}_1)$ whose norm is defined as $|\cdot |_\A = \max \{|\cdot |_{\LL(\mathcal{X}_0)},  |\cdot |_{\LL(\mathcal{X}_1)}\}.$
By Lemma~\ref{topology3} if we further define $p = |\cdot|_{\LL(\mathcal{X}_1,\mathcal{X}_0)}$ then the triple $(\A, |\cdot|, p)$ is a submultiplicative monoid.

	Next, from Assumption \ref{L1}, it is classical (see~\cite{pazy1983semigroups,goldstein1985semigroups}) that for each $t \in [0,T],$ one can define an analytic semigroup $e^{sL_t} \in \A.$ It is given by the so-called Dunford-Taylor integral formula (see \cite{pazy1983semigroups,goldstein1985semigroups}):
	\begin{equation}
	\label{formula:exp}
	e^{tL_{s}}=\frac{1}{2\pi i}\int_{\mathscr C }e^{\zeta t}(\zeta  -L_s)^{-1} d \zeta \;,
	\end{equation}
	where $\mathscr C$ is any contour running from $\infty e^{-i\theta }$ to $\infty e^{i\theta }$ in the sector $\Sigma_{\vartheta,\lambda}$ for $\theta \in (0, \pi/2 + \vartheta)$. Moreover, we have 
	\begin{equation}\label{bound:lambda}
	|e^{t L_s}|_{\LL(\mathcal{X}_0)},\,|e^{t L_s}|_{\LL(\mathcal{X}_1)}\leq e^{\lambda t }\,,\quad \forall t \geq 0\,,
	\end{equation}
	for some constant $\lambda$ being the same as in $\Sigma_{\vartheta,\lambda}$. In addition, the following estimates hold
	\begin{equation}\label{estimates_exp}
	|e^{(t-s)L_s}-\id|_{\LL( \mathcal{X}_1, \mathcal{X}_0)}\lesssim_T |t-s|\,,\quad\text{and}\quad |e^{(t-s)L_s}|_{\LL( \mathcal{X}_0, \mathcal{X}_1)}\lesssim_T |t-s|^{-1}\,.
	\end{equation}

		We now show an auxiliary estimate that is going to be useful: for every $u ,s \in [0,T]$ and $\tau >0$ the following estimate holds in $|\cdot|_{\LL(\mathcal{X}_1,\mathcal{X}_0)}:$
		\begin{equation}\label{e:perturb}
			p(e^{\tau L_s}-e^{\tau L_u}) \lesssim \tau\, \omega^\varrho(u ,s)\,.
		\end{equation}  
		Indeed, using the formula \eqref{formula:exp}, we have
		\begin{align*}
			e^{\tau L_s}-e^{\tau L_u}
			=
			\frac{1}{2\pi i}\int_{\mathscr C}e^{\zeta \tau }(\zeta -L_u)^{-1}(L_s-L_u)(\zeta -L_s)^{-1}d\zeta \,.
		\end{align*}
		By~\ref{L1}, $|(\zeta -L_u)^{-1}|_\A \lesssim 1$, and using H\"older continuity~\ref{L3} and submultiplicativity of $p$ we indeed get
		\begin{align*}
			p(e^{\tau L_s}-e^{\tau L_u}) &\leq \frac{1}{2\pi}\int_{\mathscr C}|e^{\zeta \tau }|\, |(\zeta -L_u)^{-1}|_\A\; p(L_s-L_u)|(\zeta -L_s)^{-1}|_\A d|\zeta|\\
			&\lesssim \omega^\varrho(u ,s)\int_{\mathscr C} \frac{|e^{\zeta \tau }| d|\zeta| }{(1+|\zeta|)^2} \lesssim \tau\, \omega^\varrho(u ,s)\;,
		\end{align*}
		which shows \eqref{e:perturb}.
		
		Next, in order to apply the multiplicative sewing lemma we define
		\begin{align*}
			\mu _{t,s}:=e^{(t-s)L_s}\,,\quad (t,s)\in\Delta_2\,.
		\end{align*}
		Clearly $\mu_{t,t} = \id$ for every $t \in [0,T]$. By \eqref{bound:lambda} we have $|\mu_{t,s}|_\A \leq e^{\lambda(t-s)}$ for some $\lambda \in \R$, and since $\bar{\omega}(t,s) = \lambda(t-s)$ is a control then $\mu$ satisfies~\eqref{exponential_control1}. We now show that $\mu :\Delta_2 \to (\A, p)$ is continuous. For pairs $(t,s),(v,u) \in \Delta_2$, assuming without loss of generality that $t-s-v+u > 0$, we have
		\begin{align*}
			p(\mu_{t,s} - \mu_{v,u}) &\leq p\big(e^{(v-u)L_s}(e^{(t-s-v+u)L_s} - \id)\big)+ p(e^{(v-u)L_s} - e^{(v-u)L_u})\\
			&\lesssim |t-s-v+u| + |v-u|\, \omega^\varrho(u ,s)\,,
		\end{align*}
		where for the first term we used \eqref{estimates_exp} and for the second term we use~\eqref{e:perturb}. This clearly implies continuity after taking $|t-v|+|s-u| \to 0$. Now, note that
		\[
		\mu_{t,s} - \mu_{t,u}\mu_{u ,s}=(e^{(t-u)L_s}-e^{(t-u)L_u })e^{(u -s)L_s}\,.
		\]
		Thus,
		\begin{align*}
			p(\mu_{t,s}-\mu_{t,u }\mu_{u,s })
			&\leq p\left(e^{(t-u)L_s}-e^{(t-u)L_u }\right)|e^{(u -s)L_s}|_\A\\
			&\lesssim (t-u)\, \omega^\varrho(u ,s) \leq (t-s)\, \omega^\varrho(t,s)\;,
		\end{align*}
		where going from the second to the third line we used~\eqref{e:perturb}. Since both $|t-s|$ and $\omega(t,s)$ are controls then so is $|t-s|^a\omega^b(t,s)$ for all $a,b > 0$ such that $a+b \geq 1$ (see \cite[p.22]{FV10}). In particular this is true for $a = 1/(1+\varrho)$ and $b = \varrho/(1+\varrho)$. Therefore,
		\begin{align*}
			p(\mu_{t,u ,s}-\mu_{t,u }\mu_{t,u }) \lesssim \Big(|t-s|^{1/(1+\varrho)}\omega^{\varrho/(1+\varrho)}(t,s)\Big)^{1+\varrho}\;,
		\end{align*}
		thus~\eqref{hyp_mu} is satisfied with $z = 1+\varrho$.
		We can thus apply Corollary \ref{cor:mult}, and obtain the existence of the unique continuous multiplicative function
		$S\colon \Delta_2 \to (\A, p)$ 
		such that $S_{t,t} = \id$ for every $t \in [0,T]$ and there exists $C > 0$ such that for all $(t,s) \in \Delta_2$, $|S_{t,s}|_\A \leq e^{\lambda(t-s)}$ and
		\begin{equation}
			|S _{t,s}-\mu_{t,s}|_{\LL(\mathcal{X}_1,\mathcal{X}_0)} \leq C |t-s|\,\omega^\varrho(t,s) \label{phi-mu}\;.
		\end{equation} 
		One can use density of $\mathcal{X}_1$ in $\mathcal{X}_0$ and continuity of $S$ with respect to the topology induced by the $\LL(\mathcal{X}_1,\mathcal{X}_0)$-norm to prove that $S \in \CC_2(0,T;\LL_s(\mathcal{X}_0))$, proving that $S$ satisfies~\ref{P1} and~\ref{P2}. 
		To show~\ref{P3} we simply use~\eqref{phi-mu}: 
		\begin{align*}
			|S_{t,s} - \id|_{\LL(\mathcal{X}_1,\mathcal{X}_0)} &\leq |S_{t,s} - \mu_{t,s}|_{\LL(\mathcal{X}_1,\mathcal{X}_1)} + |\mu_{t,s} - \id|_{\LL(\mathcal{X}_1,\mathcal{X}_0)}\\
			&\lesssim |t-s|\omega^\varrho(t,s) + |t-s| \\
			&\lesssim_T |t-s|\,.
		\end{align*}
		We now show~\ref{P4}. Let $x\in\mathcal{X}_1$ and take any $\epsilon >0$, then by multiplicativity:
		\begin{align}
			\epsilon^{-1}(S _{t+\epsilon ,s}-S _{t,s})x &=
			\epsilon^{-1}(S _{t+\epsilon ,t}-\id)S _{t,s}x \nonumber\\
			&=\epsilon^{-1}(\mu_{t+\epsilon ,t}-\id)S _{t,s}x + \epsilon^{-1}(S _{t+\epsilon ,t}-\mu_{t+\epsilon ,t})S _{t,s}x.\label{eq:partial_S}
		\end{align}
		We have shown that $S _{t,s} \in \LL(\mathcal{X}_0) \cap \LL(\mathcal{X}_1)$ and therefore $S _{t,s}x \in \mathcal{X}_1$ for $x \in \mathcal{X}_1$. Using $\mu_{t+\epsilon,t} = e^{\epsilon L_t}$
		we conclude that the first term in~\eqref{eq:partial_S} converges to $L_tS _{t,s}x$ as $\epsilon \to 0$. For the second term, using~\eqref{phi-mu}:
		\begin{align*}
			\epsilon^{-1}|(S _{t+\epsilon ,t}-\mu_{t+\epsilon ,t})S _{t,s}x|_0 & \leq \epsilon^{-1}|S _{t+\epsilon ,t}-\mu_{t+\epsilon ,t}|_{\LL(\mathcal{X}_1,\mathcal{X}_0)}\, |S _{t,s}x|_1\\ 
			&\lesssim \omega^\varrho(t+\epsilon, t) |S _{t,s}x|_1\;,
		\end{align*}
		which vanishes as $\epsilon$ goes to $0$. Putting it all together we conclude that for every $x\in\mathcal{X}_1$ and $(t,s)\in\Delta _2$ with $s\neq t:$
		\[
		\frac{d}{dt}S _{t,s}x=L_tS _{t,s}x.
		\]
		The proof that $\frac{d}{ds}(S _{t,s}x)= - S _{t,s}L_sx$ is similar, hence \ref{P4} follows. 
		
		The first inequality of \eqref{smoothing_S} follows by Remark~\ref{rem:Lbounded}. Now, assuming in addition that~\ref{L4} holds, we will show that $S$ satisfies~\ref{P5}. Let $x \in \mathcal{X}_1$, since $\mu_{t,s} \in \LL(\mathcal{X}_1)$ 
		we can use~\ref{P4} to differentiate in $\mathcal{X}_0$:
		\begin{align*}
			\frac{d}{dr} (S_{t,r} \mu_{r,s}x)  = S_{t,r}(L_s - L_r)\mu_{r,s}x\, .
		\end{align*}
		Thus, integrating with respect to $r$ over the interval $[s,t]$ we obtain the equation
		\begin{equation}\label{phi:equation1}
			S_{t,s}x = \mu_{t,s}x + \int_s^t S_{t,r}(L_r - L_s)\mu_{r,s}x\, dr\; .
		\end{equation}
		By density of $\mathcal{X}_1$ in $\mathcal{X}_0$ we can extend this integral equation for all $x \in \mathcal{X}_0$. For simplicity denote $|\cdot| = |\cdot|_{\LL(\mathcal{X}_0,\mathcal{X}_1)}$. Since~\eqref{phi:equation1} holds for all $x \in \mathcal{X}_0$ we can use this equation together with $|S_{t,s}|_{\LL(\mathcal{X}_1)} \lesssim_T 1$ to obtain:
		\begin{align}
			|S_{t,s}| &\lesssim |\mu_{t,s}| + \int^t_s |S_{t,r}|\, |L_r-L_s|_{\LL(\mathcal{X}_1,\mathcal{X}_0)} |\mu_{r,s}|\,dr \nonumber\\
			&\lesssim |t-s|^{-1} + \int^t_s |S_{t,r}|\, \omega^\varrho(r,s)|r-s|^{-1}dr\,,\label{phi:equation2}
		\end{align}
		where we used H\"older regularity~\ref{L3} and a semigroup bound $|\mu_{t,s}| \lesssim |t-s|^{-1}$. Now multiply both sides of~\eqref{phi:equation2} by $|t-s|$, set $f(s) = |t-s|\,|S_{t,s}|$ to obtain:
		\begin{align*}
			f(s) &\lesssim 2 + \int^{t}_s f(r) |t-s||t-r|^{-1}\,\omega^\varrho(r,s)|r-s|^{-1}dr\,.
		\end{align*}
		We now use Gronwall's inequality\footnote{A proof for Gronwall's inequality, where the lower limit of integration is a variable, is almost identical to the proof of the classical Gronwall's inequality, modulo changing a sign of the integrating factor.} to derive:
		\begin{align*}
			f(s) &\lesssim \exp\Big(\int^t_s |t-s||t-r|^{-1}\,\omega^\varrho(r,s)|r-s|^{-1}dr\Big)\nonumber\\
			&= \exp\Big(\int^t_s \frac{\omega^\varrho(r,s)dr}{r-s}  +\int^t_s\frac{\;\omega^\varrho(r,s)dr}{t-r}\Big)\;.
		\end{align*}
		The first integral in the exponential is uniformly bounded on $[0,T]$ by assumption~\ref{L4}, and for the second integral we simply observe that there is no blow up of the integrand over the integrated area. Therefore, $f(s) \lesssim_T 1$ and recalling that $f(s) = |t-s|\,|S_{t,s}|$ we conclude $|S_{t,s}| \lesssim_T |t-s|^{-1}$, thus finishing the proof.
\end{proof}

	\begin{rem}
	\label{rem:unify}
		A similar construction of $S$ satisfying \ref{P1}, \ref{P2}, \ref{P3}, \ref{P4} through the limiting infinite 
		product of $e^{(t-s)L_s}$ for a family of operators $L_t$ with $\Lambda=1$ is present in the paper of T.Kato~\cite{kato1953integration}, but only in the case where the family $L_t$ is of bounded 
		variation (as a function of $t$) in $\LL(\mathcal{X}_1,\mathcal{X}_0)$. 
		Tanabe himself proved the existence of a propagator $S$ 
		for all operators satisfying Assumption~\ref{ass:L_t} but the more general $1/\varrho$-variation 
		assumption is replaced by $\varrho$-H\"older regularity. His approach is different and relies on the 
		construction of the approximate solutions to equation~\eqref{non-autonomous}. These approximations 
		do not use a limiting infinite product which allows to relax the dissipativity assumption on $L_t$. The 
		above proof using the multiplicative sewing lemma allows unifying constructions of $S$ when restricted 
		to operators with $\Lambda = 1$.
	\end{rem}

	The following straightforward generalization of Theorem \ref{thm:tanabe} will be extensively used in the sequel. 
	
	\begin{thm}\label{Tanabe2}
	Let $(\BB_\alpha, |\cdot|_\alpha)_{\alpha \in \R}$ be a monotone family of interpolation spaces such that $\BB_\alpha$ is reflexive for each $\alpha \in \R$. Assume that there exists a set of indices $K:=\{k_-,k_-+1,\dots,k_+\}\subset \Z$ such that for every $k \in K$,
	$(L_t)_{t \in [0,T]}$ satisfies Assumption \ref{ass:L_t} where $(\mathcal{X}_0,\mathcal{X}_1)$ is replaced by $(\BB_k,\BB_{k+1})$ (with control $\omega$ possibly depending on $k\in \Z$).

Then the family $S,$ as constructed in Theorem \ref{thm:tanabe}, extends uniquely to a propagator on the full range $(\BB_\alpha )_{\alpha \in [k_-,k_+]}$ (in the sense of Definition \ref{def:part}). More explicitly, we have
	\begin{enumerate}[label = (P\arabic**)]
			\item\label{P1*} $S \in \CC(\Delta_2, \LL_s(\BB_\alpha))$ and there exists $\lambda_\alpha$ such that $\|S_{t,s}\|_{\LL(\BB_\alpha)} \leq e^{\lambda_\alpha(t-s)}$ for every $\alpha \in[k_-,k_++1],$ 
			\item\label{P2*} $S_{t,t} = \id$ and $S_{t,s} = S_{t,u}S_{u,s}$ for all $(s,u,t) \in \Delta_3$.
			\item\label{P3*} For $(s,t) \in \Delta_2$, $\alpha \in[k_-,k_+],$  and $x \in \BB_{\alpha+1}$ the following differential equations hold true in $\BB_{\alpha}$:
			\[
			\frac{d}{dt}S_{t,s}x = L_tS_{t,s}x \;,\qquad \frac{d}{ds}S_{t,s}x=-S_{t,s}L_sx\; .
			\]
			\item\label{P4*} For all $(s,t) \in \Delta_2$, $\alpha, \beta \in[k_-,k_++1]$  and $\beta \geq \alpha$ the propagator $S_{t,s}$ maps $\BB_\alpha$ to $\BB_\beta$, and in addition for $\sigma \in [0,1]$ the following smoothing inequalities are true:
			\begin{equation}\label{P:smoothing}
				|S_{t,s}x|_\beta \lesssim |t-s|^{-(\beta-\alpha)}|x|_\alpha, \qquad |(S_{t,s}-\id)x|_\alpha \lesssim |t-s|^\sigma |x|_{\alpha+\sigma} \,.
			\end{equation}
		\end{enumerate}
	\end{thm}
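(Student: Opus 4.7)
The plan is to first apply Theorem~\ref{thm:tanabe} to each pair $(\BB_k,\BB_{k+1})$ for $k\in K$, producing for every such $k$ a propagator $S^{(k)}\colon\Delta_2\to\LL(\BB_k)\cap\LL(\BB_{k+1})$ enjoying \ref{P1}--\ref{P5} at that single pair. The core construction is then a consistency argument: for $x\in\BB_{k_++1}$, the map $t\mapsto S^{(k)}_{t,s}x$ solves by~\ref{P4} the non-autonomous Cauchy problem~\eqref{non-autonomous} at the regularity level $\BB_k$; by the embedding $\BB_k\hookrightarrow \BB_{k-1}$, the same function also solves it at level $\BB_{k-1}$ whenever $k-1\in K$. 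Uniqueness of the solution at level $\BB_{k-1}$, itself a consequence of Theorem~\ref{thm:tanabe} applied to the pair $(\BB_{k-1},\BB_k)$, then forces $S^{(k)}_{t,s}x=S^{(k-1)}_{t,s}x$ for $x\in\BB_{k_++1}$. By density of $\BB_{k_++1}$ in each $\BB_j$ together with the uniform operator bounds from~\ref{P1}, this identification extends to a common operator $S_{t,s}$ whose restriction to $\BB_k$ coincides with $S^{(k)}$ for every $k\in K$.

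Once $S$ is well defined, properties~\ref{P1*},~\ref{P2*} and~\ref{P3*} at integer indices $\alpha\in\{k_-,\dots,k_++1\}$ follow verbatim from the corresponding conclusions of Theorem~\ref{thm:tanabe}, and the endpoint smoothing bounds $|S_{t,s}|_{\LL(\BB_k,\BB_{k+1})}\lesssim |t-s|^{-1}$ and $|(S_{t,s}-\id)|_{\LL(\BB_{k+1},\BB_k)}\lesssim |t-s|$ are precisely~\ref{P5} and~\ref{P3}. To promote these to the continuous range of exponents in~\ref{P4*}, I would invoke the interpolation observation stated right after Definition~\ref{def:monotone_family}: combining~\eqref{interpolation} with the two endpoint bounds just recalled yields, for every $\sigma\in[0,1]$ and integer $k\in[k_-,k_+]$,
\begin{equation*}
|S_{t,s}x|_{k+\sigma}\lesssim |t-s|^{-\sigma}|x|_k\,,\qquad |(S_{t,s}-\id)x|_k\lesssim |t-s|^{\sigma}|x|_{k+\sigma}\,.
\end{equation*}
Bridging two arbitrary real levels $\alpha\leq\beta$ in $[k_-,k_++1]$ with $\beta-\alpha\leq 1$ is then handled by writing $S_{t,s}=S_{t,u}S_{u,s}$ with $u=(t+s)/2$ and applying one half-step estimate to each factor; iterating this decomposition takes care of $\beta-\alpha>1$. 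Strong continuity into $\LL_s(\BB_\alpha)$ for non-integer $\alpha$ is then obtained by approximating $x\in\BB_\alpha$ by elements of $\BB_{\lceil\alpha\rceil}$, on which continuity is already known from the integer-level propagator, and using the uniform bound $|S_{t,s}|_{\LL(\BB_\alpha)}\lesssim 1$ just derived.

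The main obstacle is the consistency step in the first paragraph: one must carefully align the separate applications of the multiplicative sewing lemma performed on the different submultiplicative monoids $\LL(\BB_k)\cap\LL(\BB_{k+1})$ in order to conclude that they yield the same operator on overlapping subspaces. The uniqueness-of-solutions argument sketched above is the cleanest route, but an equivalent viewpoint is that the Dunford--Taylor exponentials $e^{(t-s)L_s}$ used in the sewing are intrinsically defined through the resolvent $(\zeta-L_t)^{-1}$, which by~\ref{L1} preserves each $\BB_k$ compatibly, so the partition products entering the limit~\eqref{partiotion} are themselves consistent across scales and their limits must coincide on every common domain.
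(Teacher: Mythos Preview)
Your proposal is correct and follows the same route as the paper's proof, which is extremely terse: the paper simply says that the integer case follows from Theorem~\ref{thm:tanabe} applied to each pair $(\BB_k,\BB_{k+1})$, and the non-integer case follows by interpolation via~\eqref{interpolation}. You have filled in exactly the two points the paper leaves implicit, namely the consistency of the propagators $S^{(k)}$ across different integer levels (which you resolve either by uniqueness of solutions or by consistency of the Dunford--Taylor partition products) and the passage from integer to real exponents in~\ref{P4*} via the interpolation observation after Definition~\ref{def:monotone_family}.
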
 
\begin{proof}
	If $\alpha $ is an integer, the proof follows by Theorem \ref{thm:tanabe} by taking $(\mathcal{X}_0,\mathcal{X}_1) = (\BB_\alpha,\BB_{\alpha+1})$. The general case follows by interpolation~\eqref{interpolation}, since $(\BB_\alpha )_{\alpha \in\R}$ is a monotone family of interpolation spaces.
	\end{proof}
	
\begin{exa}\label{ex:full_propagator}
Consider the family of operators given in Example \ref{ex:family1} and for each $k\in\Z,$ let $\BB_k = 
H^{2k,p}(\T^n)$ where $p \in (1,\infty),$ and assume that for all $t \in [0,T]$,
$a(t,\cdot) \in \CC^{\infty}(\T^n; \R^{n \times n}).$
Then, one can check similarly as in Example~\ref{ex:family1} that the assumptions of Theorem \ref{Tanabe2} are fulfilled for $K=\Z,$ which means that
the associated propagator satisfies the properties \ref{P1*}---\ref{P4*} for the full scale $(\BB_\alpha )_{\alpha \in\R}.$
\end{exa}

	\subsection{Lie-Trotter product formula}
	In this subsection we present another application of the multiplicative sewing lemma - the proof of a 
	Lie-Trotter-type formula for families $(S_{t,s})$ such that \ref{P1}--\ref{P4} hold. We shall call such a 
	family a quasipropagator. \footnote{Though this result is a non-trivial consequence of Theorem 
	\ref{thm:MSL}, it will not be needed in the rest of the paper.}
	
	Recall that a semigroup $S_{t}$ is called contractive if $|S_{t}| \leq 1$ for all times $t$. The classical 
	Lie-Trotter product formula for the exponential of two (not necessary commuting) operators states that, if 
	$A, B : \mathcal{X}_0 \to \mathcal{X}_0$ are two closed (unbounded) operators with common domain $\mathcal{X}_1$ such that they 
	form (respectively) contractive semigroups $S^{A}_t$ and $S^{B}_t,$ and such that their sum $A+B$ is a 
	closable operator generating a contractive semigroup $S^{A+B}_t,$ then for every $t$ and every $x \in 
	\mathcal{X}_0$:
	\begin{equation}\label{trotter}
		\lim_{n\to\infty}\prod_{i=0}^{n-1}S^A_{\frac{t}{n}}S^{B}_{\frac{t}{n}} x=S^{A+B}_{t} x\,.
	\end{equation}
	A proof of this result which is based on so called Chernoff $\sqrt{n}-$Lemma can be found in \cite[p.\ 
	53]{goldstein1985semigroups}.
	Here we present an alternative proof based on the multiplicative sewing lemma under the form of Corollary 
	\ref{cor:mult} together with simple estimates on semigroups and commutators. In addition, our proof 
	extends to the quasipropagators and does not assume contractivity of the underlying semigroups but 
	only the exponential bounds like in Definition~\ref{def:topologies}, which is weaker. Strictly speaking, every 
	semigroup that satisfies such an exponential bound can be shifted to become contractive, but our proof 
	does not require that and can be applied instantly.
	
	\begin{ass}\label{ass:lie_trotter}
		We are given continuously embedded reflexive Banach spaces $\mathcal X_2 \subseteq \mathcal X_1 \subseteq \mathcal X_0$
		and $(A_t), (B_t)$ for $t \in [0,T]$ are two families of unbounded closed operators satisfying the 
		properties \ref{L1} with $\Lambda = 1$ for the range of indices $K=\{0,1,2\}$ and \ref{L3} with $K= \{0,1\}$.
		Moreover, we assume that for every $t \in [0,T]$, the domains $D(A^i_t)$ and $D(B^i_t)$ both contain 
		$\mathcal{X}_i$ for $i = 1,2$.
	\end{ass}
	Prior to proving a generalized Lie-Trotter formula, we will need a commutator estimate of two semigroups.
	\begin{lem}[Commutator estimate]\label{lem:commutator}
		Let the families $(A_t)_{t \in [0,T]} , (B_t)_{t \in [0,T]}$ satisfy Assumption~\ref{ass:lie_trotter}. For every $v,u \in [0,T]$ define a commutator 
		\begin{align*}
			C(t,s):=\exp\{tA_v\}\exp\{sB_u \}- \exp\{sB_u \}\exp\{tA_v\}\,.
		\end{align*}
		Then the following estimate holds true uniformly in $v,u \in [0,T]$:
		\begin{equation}\label{commutator}
			|C(t,s)|_{\LL(\mathcal{X}_2,\mathcal{X}_0)} \lesssim (t \vee s)^2\,.
		\end{equation}
	\end{lem}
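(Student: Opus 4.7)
The plan is to express $C(t,s)$ as a double integral involving the operator commutator $[A_v, B_u] := A_v B_u - B_u A_v$, and then to estimate using the uniform semigroup bounds. Assumption~\ref{ass:lie_trotter} together with Remark~\ref{rem:Lbounded} implies that both $A_v$ and $B_u$ are bounded maps $\mathcal{X}_{i+1} \to \mathcal{X}_i$ uniformly in $v, u \in [0,T]$, for $i = 0, 1$. Composing, $A_v B_u$ and $B_u A_v$ belong to $\LL(\mathcal{X}_2, \mathcal{X}_0)$, whence $[A_v, B_u] \in \LL(\mathcal{X}_2, \mathcal{X}_0)$ with a bound uniform in $v,u$. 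Likewise, the analytic semigroups $e^{rA_v}$ and $e^{\tau B_u}$ are uniformly bounded on each of $\mathcal{X}_0$, $\mathcal{X}_1$, $\mathcal{X}_2$ for $r, \tau \in [0,T]$ and $v, u \in [0,T]$, by~\eqref{bound:lambda} applied on each space.

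The key step is to introduce the auxiliary function $F(r) := e^{(t-r)A_v} e^{sB_u} e^{rA_v}$ for $r \in [0,t]$, which satisfies $F(0) = e^{tA_v}e^{sB_u}$ and $F(t) = e^{sB_u}e^{tA_v}$, so that $C(t,s) = F(0) - F(t) = -\int_0^t F'(r)\,dr$. For $x \in \mathcal{X}_2$, differentiating in $\mathcal{X}_0$ gives
\[
F'(r)x = e^{(t-r)A_v}\bigl(e^{sB_u}A_v - A_v e^{sB_u}\bigr)e^{rA_v}x,
\]
since $A_v$ commutes with its own semigroup on $\mathcal{X}_1$. Applying the same trick to the inner commutator, via the auxiliary function $\tau \mapsto e^{(s-\tau)B_u}A_v e^{\tau B_u}y$ for $y \in \mathcal{X}_2$, yields
\[
A_v e^{sB_u}y - e^{sB_u}A_v y = \int_0^s e^{(s-\tau)B_u}[A_v, B_u] e^{\tau B_u}y\,d\tau.
\]
Combining these two identities, with $y = e^{rA_v}x$, delivers the double integral representation
\[
C(t,s)x = -\int_0^t\!\int_0^s e^{(t-r)A_v} e^{(s-\tau)B_u}[A_v, B_u] e^{\tau B_u} e^{rA_v} x\,d\tau\,dr.
\]

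The bound~\eqref{commutator} is then immediate: since each of the five operator factors in the integrand is uniformly bounded (the two outer semigroups on $\mathcal{X}_0$, the two inner semigroups on $\mathcal{X}_2$, and the commutator from $\mathcal{X}_2$ to $\mathcal{X}_0$), passing to $\LL(\mathcal{X}_2,\mathcal{X}_0)$-norm under the integral gives $|C(t,s)|_{\LL(\mathcal{X}_2,\mathcal{X}_0)} \lesssim ts \leq (t \vee s)^2$.

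The main obstacle is making these differentiations rigorous at the correct level of the scale. For $x \in \mathcal{X}_2$, one needs $e^{rA_v}x \in \mathcal{X}_2$ (so that $B_u$ and then $A_v$ can be applied in succession) and similarly $e^{\tau B_u}$ must leave $\mathcal{X}_2$ invariant; both facts follow from~\ref{L1} holding on each of $\mathcal{X}_0, \mathcal{X}_1, \mathcal{X}_2$, which ensures that the analytic semigroups restrict to strongly continuous analytic semigroups on the whole scale. The interchange of integration with semigroup application, and the validity of the derivative formulas in $\mathcal{X}_0$, then reduce to standard semigroup calculus and Fubini.
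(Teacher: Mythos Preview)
Your argument is correct and takes a genuinely different route from the paper's. The paper proceeds by Taylor-expanding each exponential as $e^{tA_v}=\id+tA_v+R^A(t)$ and $e^{sB_u}=\id+sB_u+R^B(s)$, establishes the remainder bounds $|R^A(t)|_{\LL(\mathcal{X}_k,\mathcal{X}_j)}\lesssim t^{k-j}$ for $(k,j)\in\{(2,0),(2,1),(1,0)\}$ (and likewise for $R^B$), then expands the product $C(t,s)$ algebraically: the leading contribution is $ts\,[A_v,B_u]$, and every cross term involving a remainder is $O((t\vee s)^2)$ in $\LL(\mathcal{X}_2,\mathcal{X}_0)$ by pairing the remainder bounds with the mapping bounds $A_v,B_u\in\LL(\mathcal{X}_{i+1},\mathcal{X}_i)$.

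Your approach is instead the ``interpolating path'' (Duhamel-type) representation: differentiating along $r\mapsto e^{(t-r)A_v}e^{sB_u}e^{rA_v}$ and then along $\tau\mapsto e^{(s-\tau)B_u}A_v e^{\tau B_u}$ produces an exact double-integral formula for $C(t,s)$, which immediately delivers the sharper bound $|C(t,s)|_{\LL(\mathcal{X}_2,\mathcal{X}_0)}\lesssim ts\leq (t\vee s)^2$. This is cleaner and more conceptual, and avoids the term-by-term bookkeeping of the Taylor expansion; the price is that one must check the differentiations are valid at the right level of the scale, which you correctly flag and which indeed follows from \ref{L1} holding on each $\mathcal{X}_i$ together with $A_v,B_u\in\LL(\mathcal{X}_2,\mathcal{X}_1)\cap\LL(\mathcal{X}_1,\mathcal{X}_0)$. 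One harmless slip: tracing the signs, the overall sign in your double-integral formula should be $+$ rather than $-$, but this does not affect the norm estimate.
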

	\begin{proof}
		The fact that these estimates are uniform in $v,u \in [0,T]$ will follow from the uniform bounds in 
		assumption~\ref{ass:L_t}, so without loss of generality we will show this for the constant in time 
		families $A$ and $B$. Define the second order Taylor remainder 
		\begin{align*}
			R^A(t):=e^{tA}-\id -tA \, ,
		\end{align*}
		and similarly define $R^B$. Then such a remainder satisfies
		\begin{equation}
			\label{estim:RA}
			|R^A(t)|_{\LL(\mathcal{X}_k,\mathcal{X}_j)}\lesssim t^{k-j},\quad \text{for}\enskip (k,j) \in \{(2,0),(2,1),(1,0)\}\,.
		\end{equation}
		Indeed, for $(k,j) = (1,0)$ we can see that by the triangle inequality and~\eqref{estimates_exp}:
		\begin{align*}
			|R^A(t)|_{\LL(\mathcal{X}_1,\mathcal{X}_0)} \leq |\exp \{tA\}-\id|_{\LL(\mathcal{X}_1,\mathcal{X}_0)} + t |A|_{\LL(\mathcal{X}_1,\mathcal{X}_0)} \lesssim t + t |A|_{\LL(\mathcal{X}_1,\mathcal{X}_0)} \lesssim t\,.
		\end{align*}
		Likewise, Assumption~\ref{L1} and Theorem \ref{Tanabe2} with $k = 1$ implies that $|\exp\{tA\}-\id|_{\LL(\mathcal{X}_2,\mathcal{X}_1)} \lesssim |t-s|$ and we can use it to show~\eqref{estim:RA} for $(k,j) = (2,1)$.
		For  $(k,j) = (2,0)$ we simply use the formula $\frac{d}{dt} e^{tA} = e^{tA} A$ twice and then the uniform  boundedness of $|e^{tA}|_{\LL(\mathcal{X}_0)}$ in $t$. For $x \in \mathcal{X}_2$
		\begin{equation} \label{ExponentialRemainder}
			\big|R^A(t) x\big|_0 = \Big|\int_0^t\int_0^s e^{rA} A^2 x\,dr ds\Big|_0 \lesssim \int_0^t\int_0^s dr ds\, \big|A^2 x\big|_0 \lesssim t^2 |x|_2 \,.
		\end{equation}
		We use this now to rewrite our commutator $C(t,s)$ as:
		\begin{align*}
			C(t,s) &= \big(\id + tA + R^A(t)\big)\big(\id + sB + R^B(s)\big) \\
			&\quad- \big(\id + sB + R^B(s)\big)\big(\id + tA  + R^A(t)\big)  \\
			& = ts(AB-BA) +R^A(t)+ sR^A(t)B + R^A(t)R^B(s)\\
			&\quad-R^B(s) - tR^B(s)A - R^B(s)R^A(t)\,.
		\end{align*}
		Using $|AB-BA|_{\LL(\mathcal{X}_2, \mathcal{X}_0)} < \infty$ and the bounds \eqref{estim:RA} the result follows.
	\end{proof}
	
	\begin{thm}[Lie-Trotter product formula]\label{thm:lie-trotter}
		Let families $(A_t)_{t \in [0,T]} , (B_t)_{t \in [0,T]}$ satisfy Assumption~\ref{ass:lie_trotter}. Assume that
		$(A_t+B_t)_{t \in [0,T]}$ is a family of closable operators and that this closure also satisfies Assumption~\ref{ass:lie_trotter}.
		Then $(A_t)$, $(B_t)$ and the closure of $(A_t+B_t)$ all generate quasipropagators which 
		we respectively call $S^{A}_{t,s}$, $S^{B}_{t,s}$ and $S^{A+B}_{t,s}$ for $(t,s) \in \Delta_2$. Moreover, for any 
		sequence of partitions $\PP^n$ of $[s,t]$ with $|\PP^n| \to 0$ as 
		$n \to \infty$ the following Lie-Trotter formula holds for every $(t,s) \in \Delta_2$ and every $x \in \mathcal{X}_0$:
		\begin{equation}\label{e:lie-trotter}
			S^{A+B}_{t,s}x = \lim_{n\to\infty} \prod_{[u,v] \in \PP^n} S^{A}_{v,u}S^{B}_{v,u} x\;,
		\end{equation}
		where the limit is taken in $\mathcal{X}_0$ and the product over $[u,v] \in \PP^n$ means that it runs over all two neighbouring points in the partition.
	\end{thm}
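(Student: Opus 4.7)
The plan is to apply the multiplicative sewing lemma (Corollary~\ref{cor:mult}) to $\mu_{t,s}:=S^A_{t,s}S^B_{t,s}$ and to identify the resulting multiplicative function with $S^{A+B}_{t,s}$. The three propagators $S^A,S^B,S^{A+B}$ are themselves obtained by applying Theorem~\ref{thm:tanabe} to the families $(A_t),(B_t)$ and the closure of $(A_t+B_t)$, all of which satisfy Assumption~\ref{ass:lie_trotter} by hypothesis.

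I would work in the Banach algebra $\A:=\LL(\mathcal X_2)\cap\LL(\mathcal X_0)$ equipped with the norm $|\cdot|_\A:=\max(|\cdot|_{\LL(\mathcal X_2)},|\cdot|_{\LL(\mathcal X_0)})$ and the seminorm $p:=|\cdot|_{\LL(\mathcal X_2,\mathcal X_0)}$; by Lemma~\ref{topology3} (using reflexivity of $\mathcal X_0,\mathcal X_2$), the triple $(\A,|\cdot|_\A,p)$ is a submultiplicative monoid. Theorem~\ref{Tanabe2} applied pairwise with $K=\{0,1,2\}$ guarantees that $|S^A_{t,s}|_\A,\,|S^B_{t,s}|_\A\leq e^{\lambda(t-s)}$ for a uniform $\lambda$, hence $|\mu_{t,s}|_\A\le e^{2\lambda(t-s)}$, yielding~\eqref{exponential_control1} with $\bar\omega(t,s)=2\lambda(t-s)$. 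Clearly $\mu_{t,t}=\id$, and continuity of $\mu\colon\Delta_2\to(\A,p)$ follows from the corresponding continuity of $S^A,S^B$ established in the proof of Theorem~\ref{thm:tanabe}.

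The core of the argument is verifying the almost-multiplicativity bound~\eqref{hyp_mu}. Using multiplicativity of $S^A$ and $S^B$ separately, an elementary algebraic manipulation yields
$$\mu_{t,s}-\mu_{t,u}\mu_{u,s}=S^A_{t,u}\,\bigl[S^A_{u,s},S^B_{t,u}\bigr]\,S^B_{u,s},$$
so by submultiplicativity of $p$ with respect to $|\cdot|_\A$ it suffices to estimate $p\bigl([S^A_{u,s},S^B_{t,u}]\bigr)$. I would decompose $S^A_{u,s}=e^{(u-s)A_s}+R^A_{u,s}$ and $S^B_{t,u}=e^{(t-u)B_u}+R^B_{t,u}$, where~\eqref{eq:S_vs_Exp} together with the embedding $\mathcal X_2\hookrightarrow\mathcal X_1$ yields $p(R^A_{u,s})\lesssim (u-s)\omega^\varrho(u,s)$ and similarly for $R^B$. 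Expanding the commutator, the leading term $[e^{(u-s)A_s},e^{(t-u)B_u}]$ is bounded in $p$ by $(t-s)^2$ via Lemma~\ref{lem:commutator}, while all mixed terms involving $R^A$ or $R^B$ are of order $(t-s)\omega^\varrho(t,s)$ by submultiplicativity and the uniform bound on $|e^{\tau A_s}|_\A,|e^{\tau B_u}|_\A$. Since $|t-s|^a\omega^b(t,s)$ remains a control when $a+b\ge 1$ (see~\cite[p.22]{FV10}), these two contributions combine into a single control raised to a power $z>1$.

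Corollary~\ref{cor:mult} then produces a unique multiplicative continuous $\varphi\colon\Delta_2\to(\A,p)$ satisfying $p(\varphi_{t,s}-\mu_{t,s})\lesssim \omega^{z}(t,s)$, and the partition convergence~\eqref{partiotion} gives the Lie–Trotter product converging to $\varphi_{t,s}$ in the $\LL(\mathcal X_2,\mathcal X_0)$-norm. To identify $\varphi$ with $S^{A+B}$, I would compare $\mu$ with $\tilde\mu_{t,s}:=e^{(t-s)\overline{(A_s+B_s)}}$, whose sewing produces $S^{A+B}$ by the proof of Theorem~\ref{thm:tanabe}. Applying the commutator estimate~\eqref{commutator} (with $v=u=s$ and $s=t$) together with the remainder bounds~\eqref{estim:RA} for each of $A_s,B_s,\overline{(A_s+B_s)}$ one shows $p\bigl(e^{(t-s)A_s}e^{(t-s)B_s}-\tilde\mu_{t,s}\bigr)\lesssim (t-s)^2$; combining with~\eqref{eq:S_vs_Exp} for $S^A,S^B$ yields $p(\mu_{t,s}-\tilde\mu_{t,s})\lesssim \omega^{z}(t,s)$, and the uniqueness part of Corollary~\ref{cor:mult} forces $\varphi=S^{A+B}$. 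Finally, since the convergence takes place in $\LL(\mathcal X_2,\mathcal X_0)$ while the partition products $\prod S^A_{v,u}S^B_{v,u}$ are equibounded in $\LL(\mathcal X_0)$ by $e^{2\lambda(t-s)}$, a standard $3\varepsilon$-argument together with the density of $\mathcal X_2$ in $\mathcal X_0$ extends~\eqref{e:lie-trotter} to all $x\in\mathcal X_0$. The main obstacle will be Step~3, where one must carefully combine the commutator Lemma~\ref{lem:commutator} with the perturbation estimate~\eqref{eq:S_vs_Exp} while keeping track of the correct exponents, so that the resulting bound is genuinely of the form $\omega'(t,s)^{z}$ with $z>1$.
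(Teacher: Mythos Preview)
Your proposal is correct and follows the paper's argument almost verbatim through the setup of the submultiplicative monoid $(\A,|\cdot|_\A,p)$, the definition of $\mu_{t,s}=S^A_{t,s}S^B_{t,s}$, the commutator decomposition $\mu_{t,s}-\mu_{t,u}\mu_{u,s}=S^A_{t,u}[S^A_{u,s},S^B_{t,u}]S^B_{u,s}$, the splitting into exponential plus remainder, and the final density argument to pass from $\mathcal X_2$ to $\mathcal X_0$.

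The only genuine difference is in the identification of the sewn object $\varphi$ with $S^{A+B}$. The paper does this by \emph{differentiating}: for $x\in\mathcal X_2$ it computes $\epsilon^{-1}(\varphi_{t+\epsilon,s}-\varphi_{t,s})x$ using the bound $p(\varphi-\mu)\lesssim\omega^z$, reduces to $\epsilon^{-1}(\mu_{t+\epsilon,t}-\id)\varphi_{t,s}x$, and reads off $(A_t+B_t)\varphi_{t,s}x$ from the product structure of $\mu$; since $S^{A+B}$ satisfies the same ODE on $\mathcal X_1\supset\mathcal X_2$ with the same initial condition, the two coincide. You instead compare $\mu$ directly with $\tilde\mu_{t,s}=e^{(t-s)\overline{(A_s+B_s)}}$ and invoke the \emph{uniqueness} clause of the sewing lemma. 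Your route avoids differentiation and is arguably more in keeping with the sewing philosophy, but it requires the additional estimate $p(e^{(t-s)A_s}e^{(t-s)B_s}-e^{(t-s)\overline{(A_s+B_s)}})\lesssim(t-s)^2$. One minor point: this last bound does not follow from the commutator estimate~\eqref{commutator} itself (which only compares $e^{\tau A}e^{\sigma B}$ with $e^{\sigma B}e^{\tau A}$), but rather from the second-order Taylor remainder bounds~\eqref{estim:RA} used in its proof, applied to each of $A_s,B_s,\overline{A_s+B_s}$; you should cite those directly rather than~\eqref{commutator}.
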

	\begin{proof} Take $\A = \LL(\mathcal{X}_2) \cap \LL(\mathcal{X}_0)$, $|\cdot|_\A = \max \{|\cdot|_{\LL(\mathcal{X}_0)}, |\cdot|_{\LL(\mathcal{X}_2)}\}$ and $p(\cdot) = |\cdot|_{\LL(\mathcal{X}_2,\mathcal{X}_0)},$ then using again Lemma~\ref{topology3} we have that $(\A, |\cdot|_\A,p)$ is a submultiplicative monoid. Without loss of generality we assume that the resolvent sets of $A_t$ and $B_t$ both contain $\Sigma_{\vartheta,\lambda}$ for the same $\vartheta>0$ and $\lambda \in \R$. Moreover, without loss of generality we also assume that the control $\omega$ and exponent $\varrho$ in~\ref{L3} is the same for $A$ and $B$. Define $\mu \colon \Delta _2\to \A$ by
		\begin{align*}
			\mu _{t,s}:= S^{A}_{t,s}S^{B}_{t,s}\,, \quad (t,s)\in\Delta_2\,.
		\end{align*}
		It follows from Remark~\ref{topology2} and by Tanabe's Theorem that $|\mu_{t,s}|_\A \leq e^{2\lambda(t-s)}$. 
		We now study the quantity $p(\mu_{t,u ,s}-\mu_{t,u }\mu_{u ,s})$. Denote $\mu^A_{t,s} = 
		e^{(t-s)A_s}$ and $\mu^B_{t,s} = e^{(t-s)B_s}$. First, by the multiplicativity of $S^A$ and $S^B$ we have 
		for any $(t,u ,s)\in\Delta _3$,
		\[\mu_{t,s} - \mu_{t,u}\mu_{u ,s}=S^{A}_{t,u }(S^{A}_{u ,s}S^{B}_{t,u }-S^{B}_{t,u }S^{A}_{u ,s})S^{B}_{u ,s}\,.
		\]
		Second, by the triangle inequality and submultiplicativity of $p$:
		\begin{align}
			p(\mu_{t,s} - \mu_{t,u}\mu_{u ,s}) &\leq |S^{A}_{t,u }|_\A\, p \left(S^{A}_{u ,s}S^{B}_{t,u }-S^{B}_{t,u }S^{A}_{u ,s}\right) |S^{B}_{u ,s}|_\A\nonumber\\
			& \leq e^{\lambda(t-s)} p (S^{A}_{u ,s}S^{B}_{t,u }-S^{B}_{t,u }S^{A}_{u ,s})\nonumber\\
			& \lesssim_{T}\; p(S^{A}_{u ,s} - \mu^A_{u ,s}) |S^{B}_{t,u }|_\A + |\mu^A_{u ,s}|_\A\, p(S^{B}_{t,u } - \mu^B_{t,u }) \nonumber\\
			&\quad + p(\mu^A_{u ,s}\mu^B_{t,u } - \mu^B_{t,u }\mu^A_{u ,s}) \nonumber\\
			&\quad + p(\mu^B_{t,u }-S^{B}_{t,u })|\mu^A_{u ,s}|_\A + |S^{B}_{t,u }|_\A\, p(\mu^A_{u ,s} - S^{A}_{u ,s})	\nonumber\\
			& \lesssim_{T}\; p(S^{A}_{u ,s} - \mu^A_{u ,s}) + p(\mu^B_{t,u }-S^{B}_{t,u }) + p\big(C(u -s, t-u)\big)\label{e:updelta_estimate}\;.
		\end{align}
		Here the commutator $$C(u -s,t-u) = \exp\{(u -s)A_s\}\exp\{(t-u)B_u \} - \exp\{(t-u)B_u \}\exp\{(u -s)A_s\}\,,$$ is exactly of the form considered in Lemma~\ref{lem:commutator}, so we have
		\begin{align*}
			p\big(C(u -s, t-u)\big) \lesssim |t-s|^2\;.
		\end{align*} 
		The construction of $S^A$ and $S^B$ using Theorem~\ref{thm:tanabe} guarantees that 
		\begin{align*}
			p(S^{A}_{u ,s} - \mu^A_{u ,s})\lesssim_T |u -s|\,\omega^\varrho(u ,s)\,,\quad p(S^{B}_{t,u } - \mu^B_{t,u })\lesssim_T |t-u|\,\omega^\varrho(t,u )\;,
		\end{align*} 
		for some $\varrho \in (0,1)$. Applying these to~\eqref{e:updelta_estimate} we get:
		\begin{align*}
			p(\mu_{t,s} - \mu_{t,u} \mu_{u ,s}) \lesssim_{\lambda, T}  |t-s|\omega^\varrho(t,s) + |t-s|^2 \lesssim |t-s|\bar\omega^\varrho(t,s)\,,
		\end{align*}
		where $\bar\omega(t,s) = \omega(t,s) + |t-s|^{\frac 1\varrho}$ is a control. We now have all the necessary ingredients to apply the multiplicative sewing lemma (Corollary \ref{cor:mult}) which implies that the limit $\lim_{|\PP| \to 0} 
		\prod_{[u,v] \in \PP} S^{A}_{v,u}S^{B}_{v,u}$ exists with respect to the semi-norm $p$, is multiplicative  and independent of the partitions. Call this limit $\varphi$. 
		First, we show that $\varphi_{t,s}x = S^{A+B}_{t,s}x$ for $x \in \mathcal{X}_2$ and for that it is enough to show 
		$\frac{d}{dt}\varphi_{t,s}x = \frac{d}{dt}S^{A+B}_{t,s}x$ since $\varphi_{t,t} = S^{A+B}_{t,t} = \id$. Like in the 
		proof of Tanabe's theorem note that there is $R(\epsilon)$ such that $|R(\epsilon)|_0 \lesssim 1$ 
		uniformly in $(t,s)$ and $\epsilon$ such that
		\begin{align*}
			\epsilon^{-1} (\varphi_{t+\epsilon,s}x - \varphi_{t,s}x) &= \epsilon^{-1} (\mu_{t+\epsilon,t} - \id) \varphi_{t,s}x + \omega^\varepsilon(t+\epsilon,t)R(\epsilon) \\
			&=\epsilon^{-1} \big((S^{A}_{t+\epsilon,t} - \id)S^{B}_{t+\epsilon,t} (S^{B}_{t+\epsilon,t} - \id)\big) \varphi_{t,s}x \\
			&\quad + \bar\omega^\varrho(t+\epsilon,t)R(\epsilon) \\
			&\to (A_t + B_t) \varphi_{t,s}x\quad\text{as}\quad \epsilon \to 0\,.
		\end{align*}
		By the assumption that $S^{A+B}$ is a quasipropagator we have $\frac{d}{dt}S^{A+B}_{t,s} = (A_t + B_t) S^{A+B}_{t,s}$ on $\mathcal{X}_1$. Using the inclusion $\mathcal{X}_2 \subset \mathcal{X}_1$ this implies that $\varphi_{t,s}x = S^{A+B}_{t,s}x$ for $x \in \mathcal{X}_2$, since $\varphi$ satisfies the same equation on $\mathcal{X}_2$. It remains to show that the limit in 
		\eqref{e:lie-trotter} can be taken for all $x \in \mathcal{X}_0$. Let $x \in \mathcal{X}_0$ and denote $S^{A+B,n}_{t,s} = 
		\prod_{[u,v] \in \PP^n} S^{A}_{v,u}S^{B}_{v,u}$. Since $\mathcal{X}_2$ is dense in $\mathcal{X}_0$, for every 
		$\epsilon > 0$ we can choose  $y \in \mathcal{X}_2$ such that $|x-y|_0 \leq \epsilon$. But then 
		\begin{align*}
			\big|(S^{A+B}_{t,s} - S^{A+B,n}_{t,s})x\big| &\leq \big|S^{A+B}_{t,s} - S^{A+B,n}_{t,s}\big|_{\LL(\mathcal{X}_2,\mathcal{X}_0)} |y|_2 + \big| S^{A+B,n}_{t,s}\big|_{\LL(\mathcal{X}_0)} |x-y|_0
			\\
			& \lesssim p(S^{A+B}_{t,s} - S^{A+B,n}_{t,s})|y|_2 + e^{\lambda(t-s)} \epsilon\,.
		\end{align*}
		Letting first $n \to \infty$ and then $\epsilon \to 0$ gives the desired result.
	\end{proof}
	Note that in case when the families $(A_t)$ and $(B_t)$ are just constant operators $A$ and $B,$ we 
	simply recover the usual Lie-Trotter formula~\eqref{trotter} for unbounded operators. Moreover, the 
	estimates are a bit better in this case since we have $S^{A}_{t,s} = \mu^A_{t,s},$ and therefore we can 
	obtain the bound
	\begin{align*}
		\big|e^{(t-s)(A+B)} - e^{(t-s)A}e^{(t-s)B}\big|_{\LL(\mathcal{X}_2,\mathcal{X}_0)} \lesssim e^{2\lambda(t-s)} |t-s|^2\;.
	\end{align*}
	
	\begin{rem}[Strang Splitting]
		If  $A,B$ are two infinitesimal generators which are independent of the time-like variable, then one can 
		find an even better approximation of $e^{t(A+B)}$.
		It was noticed by Strang in~\cite{strang1968construction} that for $h\geq 0$ small enough, the 
		operator $\exp\{\frac{h}{2}A\}\exp\{hB\}\exp\{\frac{h}{2}A\}$ yields an approximation of the semigroup 
		$\exp h(A+B)$ which is of higher order than the `na\"ive' choice $\exp hA\exp hB$. Towards using the 
		Sewing Lemma, for $(t,s)\in\Delta _2$ we let 
		\[
		\mu _{t,s}:=\exp\left\{\frac{t-s}{2}A\right\}\exp\big\{(t-s)B\big\}\exp\left\{\frac{t-s}{2}A\right\}\,,
		\]
		so that 
		\begin{align*}
			\mu _{t,s} - \mu _{t,u }\mu _{u ,s}
			&=
			e^{\frac{t-u}{2}A}\left[
			e^{\frac{u -s}{2}A}e^{(t-s)B}e^{\frac{t-u}{2}A}
			-e^{(t-u)B}e^{\frac{t-s}{2}A}e^{(u -s)B}
			\right]e^{\frac{u -s}{2}A}\,.
			\\
			&=:e^{\frac{t-u}{2}A}
			C^{\sharp}(u -s,t-u)e^{\frac{u -s}{2}A}\,.
		\end{align*}
		Expanding the exponentials like in Lemma~\ref{lem:commutator} one can deduce that formally:
		$C^{\sharp}(u -s,t-u)=O(|t-s|^3),$
		for an appropriate norm (the choice of $|\cdot |_{\LL(\mathcal{X}_3,\mathcal{X}_0)}$ would do if one generalizes assumption~\eqref{ass:lie_trotter} to $\mathcal X_3$, and generalizes~\eqref{ExponentialRemainder} accordingly).
		
		The Sewing Lemma implies in turn that the iterated products of $\mu$ converge faster than the former 
		first order approximation.
	\end{rem}

\section{Controlled Path according to a monotone family of interpolation spaces}
\label{sec:controlled}

In this section, we build a framework for studying rough evolution equations of the form 
\eqref{evolution_equation}. Towards this end, we will define the space of paths that locally `look like' a rough 
path $X,$ which will be the natural space where the solution of \eqref{evolution_equation} lives. In order to 
set up a mild formulation of \eqref{evolution_equation} we also need to show that there is a notion of 
integration with respect to $\X$ on such spaces. This will be done using a version of the classical sewing 
lemma for semigroups and propagators \cite{gubinelli2010}. The connection between this result (which we 
will refer to as `affine sewing lemma' in the sequel) and Theorem \ref{thm:MSL} will be established, having observed the role played by the affine group as introduced in \eqref{semi-direct}-\eqref{semi-direct-product}. 

Starting from this section and till the end of the article, we fix a monotone family of interpolations spaces $(\BB_\beta)_{\beta \in \R}$. From now on, the index $\beta$ in $ \BB_\beta$ will refer to the varying parameter of the scale and the index $\alpha$ in $\BB_\alpha$ will denote some fixed level thereof. Moreover, we will fix a family of unbounded operators $(L_t)_{t\in [0,T]}$ that acts on the monotone 
family $(\BB_\beta )_{\beta \in \R}$, such that they generate the propagator $(S_{t,s})_{(t,s)\in \Delta_2}$ on the full range $(\BB_\beta )_{\beta \in \R}$ in the sense of Definition~\ref{def:part}. We do not necessarily assume that $S_{t,s}$ was constructed using the Theorem~\ref{Tanabe2}. This potentially can allow us to apply the forthcoming results to linear operators $L_t$ that do not necessarily have $\Lambda = 1$ in~\ref{L1} or to Banach spaces $\BB_\beta$ that are not necessarily reflexive.

\subsection{Affine Sewing Lemma}

The affine sewing lemma is going to allow us to define integrals of the form $z_t := \int_0^tS_{t,r}y_r\cdot d\X_r $. 
Before we state the affine sewing lemma let us describe the algebraic properties of such integrals. Note that 
the linearity of the integral does not patch together nicely with the usual increment operator $\delta$ from 
\eqref{increment}. This is due to the fact that in this case:
$\dd z_{t,s} \equiv z_t-z_s = \int_s^tS_{t,r}y_r\cdot d\X_r + S_{t,s}z_s - z_s \ne \int_s^tS_{t,r}y_r\cdot d\X_r.$
Instead, we have the relation
\[\ddh z_{t,s} \equiv z_t-S_{t,s}z_s = \int_s^tS_{t,r}y_r\cdot d\X_r\;.\]

As a matter of fact, the integral $\int_s^t S_{t,r}y_r\cdot d\X_r$ has a multiplicative structure. Indeed, letting  $\beta \in \R$ (to be chosen later) and defining
\begin{equation}
\label{def:MM}
\MM(\beta):= \LL(\BB_\beta )\ltimes \BB_{\beta},
\end{equation}
we see that the multiplication of two elements $\mu _j\equiv(S_j ,x_j)\in\MM(\beta),$ $j=1,2,$
can be defined as
$\mu _1\circ \mu _2 := (S_1 S_2 , x_1 + S_ 1 x_2)\,.
$
Defining addition componentwise, we note that $\MM(\beta)$ is a near-ring, namely $(\MM(\beta),\circ)$ 
forms a monoid and the multiplication is right-distributive:
$(\mu _1+\mu _2)\circ \nu  =  \mu _1\circ \nu + \mu _2\circ \nu $
(though it is not left distributive in general).

With this at hand, and assuming that the rough convolution $\int_s^t S_{t,r}y_r\cdot d\X_r$ is meaningful, we should have
\begin{align*}
	\big(S_{t,u}, \int_u^t S_{t,r}y_r\cdot d\X_r\big)\circ \big(&S_{u,s}, \int_s^u S_{u,r}y_r\cdot d\X_r\big) 
	\\
	&= \big(S_{t,u}S_{u,s}, \int_u^t S_{t,r}y_rdX_r +  S_{t,u}\int_s^u S_{u,r}y_r\cdot d\X_r\big) 
	\\
	&= \big(S_{t,s}, \int_s^t S_{t,r}y_r\cdot d\X_r\big)\,,
\end{align*}
meaning that $\varphi_{t,s}:=(S_{t,s},\int_s^t S_{t,r}y_r\cdot d\X_r)$ is multiplicative in $\MM(\beta)$.
This suggests that using an appropriate approximation of the second component, we might be able to use 
Theorem \ref{thm:MSL} in order to construct the rough convolution.\\

	Now, prior to define the integration map we need to specify which type of integrand shall be considered in 
	the sequel. Let $\gamma \in [0,1]$ and $\alpha \in \R$, 
	we introduce the space $\ZZ^{\gamma}_\alpha$ as follows:
	$\ZZ^{\gamma}_\alpha$ consists of each 2-index element $\xi =(\xi_{t,s}) \in \CC_2^{\gamma }(\BB_\alpha) + \CC_2^{2\gamma }(\BB_{\alpha -\gamma})$ with the property that $\delta \xi \in \CC_3^{2\gamma ,\gamma }(\BB_{\alpha -2\gamma })+\CC_3^{\gamma ,2\gamma }(\BB_{\alpha -2\gamma }).$
Namely, there exist $\xi ^1,\xi ^2$ and $h^1,h^2$ with 
\begin{align}
\xi_{t,s} &= \xi ^1_{t,s} + \xi ^2_{t,s},\quad (t,s)\in\Delta _2, \label{nota:ZZ2}\\ 
\delta \xi _{t,u,s} &=h^1_{t,u,s}+ h^2_{t,u,s},\quad (t,u,s)\in\Delta _3, \nonumber
\end{align} 
and such that $[\xi ^1]_{\gamma ,\alpha } + [\xi ^2]_{2\gamma ,\alpha -\gamma }+[h^1]_{2\gamma ,\gamma ,\alpha -2\gamma }+ [h^2]_{\gamma ,2\gamma ,\alpha -2\gamma }<\infty$\,,
see \eqref{norm_3}.
The space $\ZZ^{\gamma}_\alpha$ is then equipped with the natural norm 
$$\|\xi \|_{\ZZ^{\gamma}_\alpha} := \inf_{\xi^1,\xi^2,h^1,h^2}\Big([\xi ^1]_{\gamma ,\alpha } + [\xi ^2]_{2\gamma 
,\alpha -\gamma } + [h^1]_{2\gamma ,\gamma ,\alpha -2\gamma }+ [h^2]_{\gamma ,2\gamma ,\alpha -2\gamma }\Big)\,,$$ 
where infimum is taken over every decomposition of the form \eqref{nota:ZZ2}.
(Note that analogous spaces were introduced in \cite{gubinelli2010}.) 

We also identify the space which is going to be the image of the integration map by $\E^{0,\gamma}_\alpha = \CC(\BB_\alpha) \cap \CC^\gamma(\BB_{ \alpha - \gamma})$ equipped with the norm being the maximum of norms on $\CC(\BB_\alpha)$ and $\CC^\gamma(\BB_{ \alpha - \gamma})$. With this at hand we get:

\begin{thm}[Affine Sewing Lemma] 
	\label{thm:sewing}
	Consider the propagator $(S_{t,s})_{(s,t)\in \Delta_2}$ as in Theorem~\ref{Tanabe2}, let $\alpha \in \R$ and $\gamma\in(1/3,1/2]$.
	
	There exists a unique continuous linear map $\I: \ZZ^{\gamma}_\alpha \to \E^{0,\gamma}_\alpha$ such that $\I_0(\xi) = 0$ for any $\xi \in \ZZ^{\gamma}_\alpha$ and moreover, for every $0\leq \beta < 3\gamma$, it holds
	\begin{equation}\label{e:sewing1_alt}
	| \ddh\I_{t,s}(\xi)-S_{t,s}\xi_{t,s}|_{\alpha-2\gamma + \beta}
	\lesssim \|\xi \|_{\ZZ^{\gamma}_\alpha} |t-s|^{3\gamma - \beta}\;.
	\end{equation}
	Finally, one has
	\begin{equation} \label{e:approximation}
	\I_t(\xi) = \lim_{|\pi|\to 0} \sum_{[u,v]\in \pi}S_{t,u}\xi_{v,u}\;,
	\end{equation}
	where the limit is taken in the sense of topology of $\BB_{\alpha-2\gamma}$, over arbitrary partitions $\pi$ of $[0,t]$ whose mesh-size $|\pi|\equiv\max\{v-u,\,: [u,v]\in \pi\}$ goes to $0$.
\end{thm}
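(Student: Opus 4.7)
The plan is to realize the construction as an instance of the multiplicative Sewing Lemma (Theorem~\ref{thm:MSL}) applied to the affine monoid $\MM(\alpha-2\gamma):=\LL(\BB_{\alpha-2\gamma})\ltimes\BB_{\alpha-2\gamma}$ introduced in~\eqref{def:MM}. I would endow $\MM(\alpha-2\gamma)$ with the norm $|(T,b)|:=1+|T|_{\LL(\BB_{\alpha-2\gamma})}+|b|_{\alpha-2\gamma}$ and the pseudometric $d((T_1,b_1),(T_2,b_2)):=|T_1-T_2|_{\LL(\BB_{\alpha-2\gamma})}+|b_1-b_2|_{\alpha-2\gamma}$, a routine check using the semi-direct product law~\eqref{semi-direct-product} showing that $(\MM(\alpha-2\gamma),|\cdot|,d)$ is a submultiplicative monoid whose $|\cdot|$-balls are $d$-complete. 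Setting $\mu_{t,s}:=(S_{t,s},\xi_{t,s})$, the identity $\mu_{t,u}\circ\mu_{u,s}=(S_{t,s},\xi_{t,u}+S_{t,u}\xi_{u,s})$ reduces the almost-multiplicative defect to $d(\mu_{t,s},\mu_{t,u}\circ\mu_{u,s})=|\ddh\xi_{t,u,s}|_{\alpha-2\gamma}$.

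The heart of the proof is the estimate $|\ddh\xi_{t,u,s}|_{\alpha-2\gamma}\lesssim\|\xi\|_{\ZZ^\gamma_\alpha}(t-s)^{3\gamma}$, obtained by splitting $\ddh\xi_{t,u,s}=\delta\xi_{t,u,s}+(\id-S_{t,u})\xi_{u,s}$ and exploiting the decomposition $\xi=\xi^1+\xi^2$, $\delta\xi=h^1+h^2$ afforded by $\ZZ^\gamma_\alpha$. The first term is bounded directly by $[h^1]|t-u|^{2\gamma}|u-s|^\gamma+[h^2]|t-u|^\gamma|u-s|^{2\gamma}\lesssim(t-s)^{3\gamma}$, while the second is handled using the semigroup smoothing~\eqref{semigroup} with $\sigma=2\gamma$ applied to $\xi^1\in\BB_\alpha$ and $\sigma=\gamma$ applied to $\xi^2\in\BB_{\alpha-\gamma}$, both contributions producing $(t-s)^{3\gamma}$. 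Since $\gamma>1/3$, taking $\omega(t,s):=t-s$ and $z:=3\gamma>1$ verifies hypothesis~\eqref{hyp_mu0}.

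The moderate-growth hypothesis of Theorem~\ref{thm:MSL} can be secured by a direct Cauchy-type argument on dyadic partitions $\pi_n$ of mesh $2^{-n}(t-s)$: refining $\pi_n$ to $\pi_{n+1}$ changes the Riemann-sum $b$-component by a sum of $2^n$ almost-multiplicativity errors, each of order $(2^{-n}(t-s))^{3\gamma}$, so the cumulative correction is $O(2^{-n(3\gamma-1)})$, summable in $n$ since $3\gamma>1$; the limit then furnishes a uniform a priori bound on $|\mu^\pi_{t,s}|$. Theorem~\ref{thm:MSL} then produces a unique multiplicative $\varphi_{t,s}=(S_{t,s},A_{t,s})$ satisfying $|A_{t,s}-\xi_{t,s}|_{\alpha-2\gamma}\lesssim\|\xi\|_{\ZZ^\gamma_\alpha}(t-s)^{3\gamma}$. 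The multiplicativity relation $A_{t,s}=A_{t,0}-S_{t,s}A_{s,0}$ motivates defining $\I_t(\xi):=A_{t,0}$, so that $\ddh\I_{t,s}(\xi)=A_{t,s}$; combined with the bound $|\xi_{t,s}-S_{t,s}\xi_{t,s}|_{\alpha-2\gamma}\lesssim(t-s)^{3\gamma}$ (the same smoothing applied to $\xi$ itself), this establishes~\eqref{e:sewing1_alt} at $\beta=0$.

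To cover the full range $\beta\in[0,3\gamma)$ and to upgrade the target of $\I_t$ to $\BB_\alpha$ as demanded by $\E^{0,\gamma}_\alpha$, I would refine the sewing argument using the smoothing bound $|S_{t,u}|_{\LL(\BB_{\alpha-2\gamma},\BB_{\alpha-2\gamma+\beta})}\lesssim(t-u)^{-\beta}$ inside the Cauchy estimate for $\ddh\I_{t,s}-S_{t,s}\xi_{t,s}$, trading $\beta$ units of time regularity against $\beta$ units of spatial regularity; the choices $\beta=\gamma$ and $\beta=2\gamma$ then respectively yield the $\CC^\gamma(\BB_{\alpha-\gamma})$-regularity and the $\CC(\BB_\alpha)$-continuity of $t\mapsto\I_t(\xi)$. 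The approximation formula~\eqref{e:approximation} is extracted from the product limit~\eqref{partiotion} in Theorem~\ref{thm:MSL} after verifying that the two natural Riemann sums $\sum_{[u,v]\in\pi}S_{t,u}\xi_{v,u}$ (as stated) and $\sum_{[u,v]\in\pi}S_{t,v}\xi_{v,u}$ (as generated by the monoid product) have the same limit in $\BB_{\alpha-2\gamma}$, their difference $\sum_i S_{t,t_{i+1}}(\id-S_{t_{i+1},t_i})\xi_{t_{i+1},t_i}$ being $O(|\pi|^{3\gamma-1})$ by the same smoothing estimates. Linearity and uniqueness of $\I$ descend directly from Theorem~\ref{thm:MSL}. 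I expect the main technical obstacle to be not the $\beta=0$ case, which proceeds smoothly, but rather the precise interpolation-scale bookkeeping required to recover the full $\beta$-family of estimates~\eqref{e:sewing1_alt} and, in particular, to place $\I_t(\xi)$ in the strong space $\BB_\alpha$, an upgrade that goes slightly beyond what a direct application of Theorem~\ref{thm:MSL} at the weak level of $\BB_{\alpha-2\gamma}$ immediately provides.
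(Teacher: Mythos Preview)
Your proposal is correct and follows essentially the paper's route: apply Theorem~\ref{thm:MSL} in the affine monoid $\MM(\alpha-2\gamma)$ to obtain the $\beta=0$ estimate, then upgrade to general $\beta\in(0,3\gamma)$ via a dyadic telescoping that inserts the smoothing factor $|S_{t,m}|_{\LL(\BB_{\alpha-2\gamma},\BB_{\alpha-2\gamma+\beta})}\lesssim(t-m)^{-\beta}$. The only notable difference is cosmetic: you take the germ $\mu_{t,s}=(S_{t,s},\xi_{t,s})$ whereas the paper takes $\mu_{t,s}=(S_{t,s},S_{t,s}\xi_{t,s})$, a choice that directly produces the Riemann sum $\sum S_{t,u}\xi_{v,u}$ of~\eqref{e:approximation} and the remainder in the stated form $\ddh\I-S\xi$, thereby avoiding the two small $(\id-S)\xi$ reconciliations you (correctly) carry out.
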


The proof of Theorem \ref{thm:sewing} in the context of semigroups can be found in \cite{gerasimovics2018} 
or \cite{gubinelli2010}. The proof in the case of propagators is carried out \emph{mutatis mutandis}, using 
for instance the smoothing property \eqref{smoothing_S}.
For the sake of completeness, we provide an alternative proof based on the multiplicative sewing lemma. 

\begin{proof}
Define the monoid $\MM=\MM(\alpha-2\gamma).$
We first show that $\MM$ can be endowed with a submultiplicative monoid structure, in the sense of Definition \ref{def:topologies}.
Given $\mu_j \equiv(S_j,x_j)\in \MM,j=1,2,$ one defines a distance (which turns out to be also a norm):
\begin{equation*}
d(\mu _1,\mu _2)
:=|S_1-S_2|_{\LL(\BB_{\alpha-2\gamma})} + |x_1-x_2|_{\alpha-2\gamma}\,,
\end{equation*}
while we let 
\[
|\mu |:= \max\big(1,d(\mu, 0)\big)\,.
\]
Using the right-distributivity of $\circ$ one can show $d(\mu_1\circ\nu, \mu_2\circ \nu) \leq d(\mu_1,\mu_2) 
|\nu|$ and the inequality $d(\nu\circ\mu_1, \nu\circ \mu_2) \leq  |\nu| d(\mu_1,\mu_2)$ can also be shown 
easily. Then, since $|\cdot|$ is continuous with respect to $d$,
the completeness assumption from Definition \ref{def:topologies} is satisfied and one concludes that 
$(\MM,|\cdot |,d)$ is a submultiplicative monoid.

Next, define $\mu :\Delta_2 \to \MM$ as
\[
\mu _{t,s}:=(S_{t,s}, S_{t,s}\xi _{t,s})\,.
\]
For every $(t,u,s)\in\Delta_3,$
observe that 
\begin{equation*}
\mu _{t,s}-\mu _{t,u }\circ\mu _{u ,s}
\equiv
(0,S_{t,s}\delta \xi _{t,u ,s}+ S_{t,u }(S_{u ,s}-\id)\xi _{t,u })\,.
\end{equation*}
Hence, using \eqref{P:smoothing}:
\begin{align*}
d(\mu _{t,s},\mu _{t,u }\circ\mu _{u ,s})
&\lesssim_{S} |\delta \xi_{t,u,s} |_{\alpha -2\gamma } + |(S_{u ,s}-\id)\xi _{t,u }|_{\alpha -2\gamma }
\\
&\leq 
\|\xi \|_{\ZZ^{\gamma}_\alpha}\big[|t-u |^{2\gamma }|u -s|^\gamma + |t-u|^\gamma |u -s|^{2\gamma} \big]
\,,
\end{align*}
showing in particular that $\mu $ is almost-multiplicative.\\

We now note that for every partition $\pi$ of $[s,t]$, 
$\mu^{\pi}_{t,s}=(S_{t,s},\I^\pi_{t,s}(\xi ))$, where $\I^\pi_{t,s}(\xi):=\sum_{(u,v)\in\pi}S_{t,u}\xi_{v,u}$ is the partial sum 
associated with $\pi$. Along the same lines as the proof of \eqref{maximal}, one can show that 
$|\mu^\pi_{t,s}| \lesssim 1+|t-s|^\gamma + |t-s|^{3\gamma}$ uniformly over every partition $\pi$ of $[s,t]$, 
thus showing $\mu \in \mathrm{BG}_2(0,T;\MM)$. We can then either use $\mu \in \mathrm{BG}_2(0,T;\MM)$ and apply Theorem 
\ref{thm:MSL} or use the fact that $|\cdot|$ is Lipschitz with respect to $d$ and apply \cite[Theorem 
10]{feyel2008non} to obtain existence of the unique multiplicative $\varphi_{t,s} = (S_{t,s}, I_{t,s})$ such that 
$|I_{t,s} - S_{t,s}\xi_{t,s}|_{\alpha-2\gamma} = d(\varphi_{t,s}, \mu_{t,s}) \lesssim |t-s|^{3\gamma}$. 
Letting $\I_t(\xi ):=I_{0,t},$ it is seen, thanks to multiplicativity of $\varphi $ that $\ddh\I_{t,s}(\xi )=I_{t,s},$ so that \eqref{e:sewing1_alt} holds with $\beta = 0$.

We now go over the proof of \eqref{e:sewing1_alt} for general $\beta \in (0,3\gamma)$ which also implies the 
continuity of $\I$ as a map $\ZZ^\gamma_\alpha \to \E^{0,\gamma}_\alpha$. To show this we take the dyadic 
partitions, namely $\pi_k:=\{s = t_0 < t_1 < \dots < t_{2^k} = t \}$ where $t_i = s + 2^{-k}i(t-s)$.
Denoting by $m=(u+v)/2,$ and decomposing $\xi $ as in \eqref{nota:ZZ2}, we have
\[\begin{aligned}
\I^{\pi_k}_{t,s}-\I^{\pi_{k+1}}_{t,s}
&=\sum_{[u,v]\in \pi_k}S _{t,u}\delta \xi _{v,m,u}+ S_{t,m}(S _{m,u}-\id)\xi _{v,m}
\\
&=\sum_{[u,v]\in \pi_k}S _{t,u}h^1 _{v,m,u}+\sum_{[u,v]\in \pi_k}S _{t,u}h^2 _{v,m,u}
\\
&\;\,
+\sum_{[u,v]\in \pi_k}S_{t,m}(S _{m,u}-\id)\xi^1 _{v,m}\;+\sum_{[u,v]\in \pi_k}S_{t,m}(S _{m,u}-\id)\xi^2 _{v,m}
\\
&=: \mathrm{I} +\mathrm{II} +\mathrm{III} +\mathrm{IV}\,.
\end{aligned}\]
Denote $\beta' = \alpha -2\gamma +\beta$ for shorthand. From the definition of the space $\ZZ^{\gamma}_\alpha,$ and from \eqref{P:smoothing} we can bound the first two terms as follows:
\[
\begin{aligned}
|\mathrm{I}+\mathrm{II}|_{\beta'}
&\leq  \|\xi \|_{\ZZ^{\gamma}_\alpha}\sum_{[u,v]\in \pi_k}|t-m|^{-\beta}\big[\,|v-m|^{2\gamma }|m-u|^{\gamma } + |v-m|^{\gamma }|m-u|^{2\gamma }\big]\;.
\end{aligned}
\]
For the third term, we have 
\[\begin{aligned}
|\mathrm{III}|_{\beta'}
&\lesssim_S\sum\nolimits_{[u,v]\in \pi_k}|t-m|^{-\beta}|(S_{m,u}-\id)\xi^1_{m,v}|_{\alpha-2\gamma}
\\
&\leq [\xi ^1]_{\gamma ,\alpha }\sum_{[u,v]\in \pi_k}|t-m|^{-\beta}|v-m|^{\gamma }|m-u|^{2\gamma},
\end{aligned}
\]
and similarly for the fourth term:
\[
|\mathrm{IV}|_{\beta'}
\leq [\xi ^2]_{2\gamma ,\alpha -\gamma }\sum_{[u,v]\in \pi_k}|t-m|^{-\beta}|v-m|^{2\gamma }|m-u|^\gamma\,.
\]

Now choose $\delta \geq 0$ such that $3\gamma - 1 > \delta > \beta - 1$. Summing all contributions, and 
observing that $v-m=m-u=\frac{|t-s|}{2^{k+1}}\leq t-m$ we have 
\[\begin{aligned}
|\I^{\pi_k}_{t,s}-\I^{\pi_{k+1}}_{t,s}|_{\beta' }
&\lesssim
\|\xi \|_{\ZZ^{\gamma}_\alpha} \sum_{[u,v]\in \pi_k}|t-m|^{-\beta}|v-m|^{3\gamma -1}|m-u|
\\
&\lesssim
\|\xi \|_{\ZZ^{\gamma}_\alpha} \sum_{[u,v]\in \pi_k}|t-m|^{\delta-\beta}|v-m|^{3\gamma-1-\delta} |m-u|
\\
&\lesssim
\|\xi \|_{\ZZ^{\gamma}_\alpha}2^{-k(3\gamma-1-\delta)}\left|t-s\right|^{3\gamma-1-\delta}\sum_{[u,v]\in \pi_k}|t-m|^{\delta-\beta} |m-u|
\\
&\lesssim  \|\xi \|_{\ZZ^{\gamma}_\alpha}2^{-k(3\gamma-1-\delta)}\left|t-s\right|^{3\gamma-1-\delta}\int_s^t|t-r|^{\delta - \beta}dr
\\
&\lesssim \|\xi \|_{\ZZ^{\gamma}_\alpha}2^{-k(3\gamma-1-\delta)}\left|t-s\right|^{3\gamma-\beta}\,,
\end{aligned}
\]
where we used the convexity of the integrand in the above Riemann sum since $\delta - \beta > -1$. Since 
$\delta$ above  is chosen so that $3\gamma-1-\delta > 0$ we can sum over $k \in \N_0$ to finally obtain 
\eqref{e:sewing1_alt} and finish the proof.
\end{proof}

\begin{rem}
	The monoid defined above can be endowed with a more complex submultiplicative structure which involves two indices of spatial regularity
$\beta'\leq\beta .$ 
Let
\begin{equation*}
\MM:=\MM(\beta',\beta):= \big(\LL(\BB_{\beta' })\cap \LL(\BB_\beta )\big)\ltimes \BB_{\beta' },
\end{equation*}
and given $\mu_j \equiv(S_j,x_j)\in \MM,j=1,2,$ introduce the distance $d(\mu _1,\mu _2)
:=|S_1-S_2|_{\LL(\BB_\beta ,\BB_{\beta'})} + |x_1-x_2|_{\beta'}$.
Defining further
$|\mu |:= \max(1,|S|_{\LL(\BB_\beta )\cap\LL(\BB_{\beta'})} + |x|_{\beta}),$
it is easily checked, using the same compactness argument as that of the proof of Lemma \ref{topology3},
that $(\MM,|\cdot |,d)$ is a submultiplicative monoid.
Making the choice $(\beta',\beta):=(\alpha-2\gamma, \alpha+\gamma-\kappa )$, $\kappa >0$ being arbitrary, 
one can obtain an alternative proof of Theorem \ref{thm:sewing} (the main difference is that the condition 
\eqref{partition_growth} is shown to hold on dyadic partitions only, in which case a version of Theorem 
\ref{thm:MSL} still holds).

The advantage of introducing $\MM(\beta',\beta )$ in comparison with the above (simpler) proof is that it allows obtaining approximation results for rough convolutions. More precisely, given an almost multiplicative 
approximation $\tilde S_{t,s}$ of $S_{s,t}$ (think for instance of a semigroup $e^{(t-s)L_s}$ or of the resolvent approximation of such semi-group), the multiplicative sewing Lemma then tells us that
$\sum_{[u,v]\in\pi} \tilde S_{t,u} \xi_{v,u}$ converges towards the same limit as \eqref{e:approximation} as $|\pi|\to 0$ by considering $\tilde \mu_{t,s} = (\tilde S_{t,s}, \tilde S_{t,s}\xi_{t,s})$.
This fact could certainly be useful in the quasilinear case, or for numerical analysis purposes. 
On the other hand, potential applications of this observation go beyond the scope of this paper, and hence 
we chose to avoid such a level of generality.
\end{rem}

	\subsection{Controlled rough paths}

In this paragraph, we introduce a notion of controlled paths that live in some $\BB_\alpha$.
Our definition differs from the ones given in \cite{gubinelli2010,gerasimovics2018} since it is independent of 
the propagator $S$ and its characterization does not involve the reduced increments $\ddh .$
Prior to introduce the notion of controlled paths in addition to the $\E^{0,\gamma}_\alpha$ we define a space 
$\E^{\gamma,2\gamma}_\alpha = \CC_2^\gamma(\BB_{\alpha-\gamma}) \cap 
\CC_2^{2\gamma}(\BB_{\alpha-2\gamma})$. Both $\E^{0,\gamma}_\alpha$ and $\E^{\gamma,2\gamma}_\alpha$ 
reflect the parabolic nature of~\eqref{evolution_equation} and show an interplay between the time and spatial 
regularity. 

	\begin{defn}[Controlled path according to a monotone family]
		\label{def:controlled}
		Let  $\BB := (\BB_{\beta})_{\beta \in\R}$ be a monotone family of interpolation spaces. Assume that
		$\X \equiv(X,\XX)\in \cC^{\gamma}(0,T;\R^d)$ for some $\gamma>1/3$, and let $\alpha \in \R$.
		We say that a pair $(y,y')$ is \emph{controlled by $\X$} according to $\BB_\alpha$ if the following holds:
		\begin{enumerate}[label=(\roman*)]
			\item\label{D1}
			We have $(y,y')\in \CC(\BB_\alpha) \times (\E^{0,\gamma}_{\alpha-\gamma})^d$ ;
			\item\label{D2}
			The remainder $R^y$ defined as:
			\begin{equation*}
				R^y_{t,s} :=\delta y_{s,t}-y'_s\cdot \delta X_{s,t}\equiv  \delta y_{t,s} - \sum\nolimits_{i=1}^dy_s^{\prime,i}\delta X^i_{t,s}\;,
			\end{equation*}
		belongs to $\E^{\gamma,2\gamma}_\alpha$.
		\end{enumerate}
		We will denote the space of all such controlled rough paths by $\DD^{2\gamma}_{X}([0,T], \BB_\alpha)$ or simply  $\DD^{2\gamma}_{X,\alpha }([0,T])$. When $T>0$ is fixed we will simplify further and make an abuse of notation by writing simply $\DD^{2\gamma}_{X,\alpha }$. Sometimes we will refer to $y'$ as the Gubinelli derivative.

We endow the space $\DD_{X,\alpha }^{2\gamma}$ with the norm:
		\[
		\|y,y'\|_{\DD^{2\gamma}_{X,\alpha}} = |y|_{0, \alpha}+ |y'|_{\E^{0,\gamma}_{\alpha-\gamma}}+ |R^y|_{\E^{\gamma,2\gamma}_\alpha}\,.
		\]
With this definition, it is easy to check that $\DD^{2\gamma}_{X,\alpha}$ is a Banach space (we leave the details to the reader).
	\end{defn}

	One can actually see from the above definition that $y \in \E^{0,\gamma}_\alpha$ and 
	\begin{equation}\label{y:regularity}
		[\dd y]_{\gamma, \alpha-\gamma} \leq |y'|_{0, \alpha-\gamma} [\dd X]_\gamma+ [R^y]_{\gamma, \alpha-\gamma}\;.
	\end{equation}
	For that reason, we do not make H\"older regularity of $y$ as a part of the definition of a controlled rough path. 
	
	Note that one can recover the usual definition of the controlled rough path (see \cite{gubinelli2004,friz}) 
	from the above definition if one takes $\BB_\beta = \BB_0$ for all $\beta \in \R$. In the notation of 
	\cite[Definition 4.6]{friz} we then have $\mathscr D^{2\gamma}_X(0,T;\BB_0) = 
	\DD^{2\gamma}_{X}([0,T],\BB_0)$. Therefore, all our later analysis applies to the finite-dimensional case by 
	choosing such constant family $\BB_0$ and the propagator to be the identity: $S_{t,s} = \id$.
	
		\begin{rem}
		The definition of controlled rough paths can be reformulated using the `reduced increment' $\ddh$ 
		instead of $\dd=\dd^{\id}$, which might look more natural when talking about the integrals 
		$\int_0^tS_{t,s}y_s\cdot d\X_s$.
		Following \cite{gubinelli2010, deya2012rough,gerasimovics2018}, it is indeed possible to introduce the space
		$\DD^{2\gamma,}_{X,S,\alpha }$ consisting of pairs $(y,y') \in \CC(0,T;\BB_\alpha)\times \CC(0,T;\BB_{\alpha-\gamma})$, 
		such that
		$\tilde{R}^y_{t,s} := \ddh y_{t,s} - S_{t,s}y'_s \dd X_{t,s}$ belongs to $\CC^{2\gamma}_2(0,T;\BB_{\alpha-2\gamma}) \cap \CC^{\gamma}_2(0,T;\BB_{\alpha-\gamma})$
		while 
		$\ddh y'$  belongs to $\CC_2^\gamma(0,T;\BB_{\alpha-2\gamma})\,$. 
		
		However, this has the inconvenience that the spaces considered depend on the propagator $S$ while, as seen in the present paper, it is possible to get rid of this dependency (the spaces $\DD_{X,\alpha }^{2\gamma }$ do however, depend on the scale $(\BB_\beta )_{\beta \in\R}$).
		This would in addition make some proofs (like the proof of Lemma~\ref{lem:composition}) more tedious. Finally, the 
		space $\DD^{2\gamma}_{X,\alpha}$ is much closer to the usual notion of controlled rough path; for 
		instance its elements are controlled rough paths in the sense of \cite[Definition 4.6]{friz} at the level of 
		$\BB_{\alpha-2\gamma}$. 
	\end{rem}

	In the sequel we will extensively use the following interpolation inequality, which is an immediate 
	consequence of \eqref{interpolation}; for every $\beta \in [\gamma,2\gamma],$ we have:
	\begin{equation}\label{interpolation2}
		|R^y_{t,s}|_{\alpha-\beta} \leq |R^y|_{\E^{\gamma,2\gamma}_\alpha} |t-s|^\beta\,.
	\end{equation}

	We now state the fundamental result for the above controlled rough paths, namely that for such paths 
	the `rough convolution' with respect to $\X$ is well-defined.
	\begin{thm}[Integration] \label{integration}
		Let $\X = (X,\XX) \in \cC^{\gamma}(0,T;\R^d)$ for some $\gamma>1/3$ and let $\alpha \in \R$. Let $(y^i,y^{i,\prime}) \in \DD^{2\gamma}_{X,\alpha }$ for  $i=1,\dots ,d$. Then the integral 
		\begin{equation} \label{e:integration1}
				\int_0^tS_{t,s}y_s\cdot d\X_s  
				:= \lim_{|\pi|\to 0} \sum_{[u,v] \in \pi}S_{t,u}(y_{u}\cdot \delta X_{v,u} + y'_{u}:\XX_{v,u})\,,
		\end{equation}
		exists as an element of $\BB_\alpha,$ where we denote $y'_u:\XX_{v,u}:=\sum_{1\leq i,j\leq d}y^{\prime,ij}\XX_{v,u}^{ij}.$
		
		Moreover, for every $0 \leq \beta < 3\gamma$ the above integral satisfies the estimate 
		\begin{equation} \label{e:integration2}
		\Big| \int_s^tS_{t,u}y_u\cdot d\X_u - S_{t,s}(y_s \cdot \delta X_{t,s}+ y'_s:\XX_{t,s})\Big|_{\alpha-2\gamma+\beta} \lesssim \varrho_\gamma(\X) \|y,y'\|_{\DD^{2\gamma}_{X,\alpha}} |t-s|^{3\gamma-\beta}\,,
		\end{equation} 
		for all $(s,t) \in \Delta_2$.
	\end{thm}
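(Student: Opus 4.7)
The plan is to construct the rough convolution as the output of the Affine Sewing Lemma (Theorem~\ref{thm:sewing}) applied to a suitable two-index local approximation. Specifically, I would set
\[
\xi_{t,s} := y_s \cdot \delta X_{t,s} + y'_s : \XX_{t,s}, \quad (t,s) \in \Delta_2,
\]
verify that $\xi\in\ZZ^{\gamma}_\alpha$, and then define $\int_0^t S_{t,u}y_u\cdot d\X_u := \I_t(\xi)$. Once this is done, \eqref{e:integration1} is precisely the approximation formula \eqref{e:approximation}, while \eqref{e:integration2} is a rewriting of \eqref{e:sewing1_alt} after noting that $\ddh\I_{t,s}(\xi)=\I_t(\xi)-S_{t,s}\I_s(\xi)$ is exactly $\int_s^tS_{t,u}y_u\cdot d\X_u$ (one only needs to insert $s$ as a grid point in the partitions of $[0,t]$ and use $S_{t,u}=S_{t,s}S_{s,u}$ for $u\leq s$).

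The decomposition of $\xi$ itself is transparent: letting $\xi^1_{t,s}:=y_s\cdot \delta X_{t,s}$ and $\xi^2_{t,s}:=y'_s:\XX_{t,s}$, the bounds $[\xi^1]_{\gamma,\alpha}\leq |y|_{0,\alpha}[\dd X]_\gamma$ and $[\xi^2]_{2\gamma,\alpha-\gamma}\leq |y'|_{0,\alpha-\gamma}[\XX]_{2\gamma}$ are immediate from Definition~\ref{def:controlled}.

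The main algebraic step is to compute $\dd \xi_{t,u,s}$. Expanding the three terms, applying $\delta X_{t,s}=\delta X_{t,u}+\delta X_{u,s}$ together with Chen's relation~\eqref{chen} for $\XX$, and then substituting $\sum_j y^{i,j,\prime}_s\delta X^j_{u,s}=\delta y^i_{u,s}-R^{y^i}_{u,s}$ from the controlled-path definition, one obtains after straightforward cancellations
\[
\dd\xi_{t,u,s} = -R^y_{u,s}\cdot \delta X_{t,u} - \dd y'_{u,s}:\XX_{t,u}.
\]
Setting $h^1_{t,u,s}:=-R^y_{u,s}\cdot \delta X_{t,u}$, the interpolation bound~\eqref{interpolation2} at the level $\BB_{\alpha-2\gamma}$ gives $[h^1]_{\gamma,2\gamma,\alpha-2\gamma}\leq [R^y]_{\E^{\gamma,2\gamma}_\alpha}[\dd X]_\gamma$, while for $h^2_{t,u,s}:=-\dd y'_{u,s}:\XX_{t,u}$ the built-in regularity $y'\in \CC^\gamma(\BB_{\alpha-2\gamma})^d$ yields $[h^2]_{2\gamma,\gamma,\alpha-2\gamma}\leq [\dd y']_{\gamma,\alpha-2\gamma}[\XX]_{2\gamma}$. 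Together these place $\dd\xi$ in $\CC_3^{\gamma,2\gamma}(\BB_{\alpha-2\gamma})+\CC_3^{2\gamma,\gamma}(\BB_{\alpha-2\gamma})$ as required, with the resulting $\ZZ^\gamma_\alpha$-norm of $\xi$ controlled by $\varrho_\gamma(\X)\|y,y'\|_{\DD^{2\gamma}_{X,\alpha}}$.

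With $\xi\in\ZZ^{\gamma}_\alpha$ in hand, Theorem~\ref{thm:sewing} delivers $\I(\xi)\in \E^{0,\gamma}_\alpha$, and \eqref{e:sewing1_alt} specialized to our $\xi$ is exactly \eqref{e:integration2}; formula~\eqref{e:integration1} is \eqref{e:approximation}. The only genuinely nontrivial point is the algebraic derivation of the identity for $\dd\xi$ above: it is the direct analogue of the standard finite-dimensional computation, but must be carried out keeping careful track of which spaces in the scale host $R^y$, $\dd y'$, $\delta X$ and $\XX$, so that the two pieces land in the correct anisotropic spaces $\CC_3^{\gamma,2\gamma}$ and $\CC_3^{2\gamma,\gamma}$ tolerated by the definition of $\ZZ^\gamma_\alpha$. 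Everything else is bookkeeping.
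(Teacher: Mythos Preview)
Your proposal is correct and follows exactly the same route as the paper: define $\xi_{t,s}=y_s\cdot\delta X_{t,s}+y'_s:\XX_{t,s}$, verify $\xi\in\ZZ^\gamma_\alpha$ via the obvious splitting $\xi=\xi^1+\xi^2$ and the Chen-relation identity $\delta\xi_{t,u,s}=-R^y_{u,s}\cdot\delta X_{t,u}-\delta y'_{u,s}:\XX_{t,u}$, then invoke Theorem~\ref{thm:sewing}. The paper records the identity for $\delta\xi$ with the opposite sign, but this is immaterial for the estimates; your allocation of the two pieces to $\CC_3^{\gamma,2\gamma}(\BB_{\alpha-2\gamma})$ and $\CC_3^{2\gamma,\gamma}(\BB_{\alpha-2\gamma})$ matches the paper's bound line exactly.
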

	\begin{proof}
	It suffices to apply the affine sewing lemma (Theorem \ref{thm:sewing}) to 
	\[
	\xi_{t,s} = y_s\cdot \delta X_{t,s} + y'_s:\XX_{t,s},\quad (t,s)\in\Delta _2\,,
	\]
	for which we need to show that $\xi \in \ZZ^{\gamma}_\alpha$. 
	Indeed, the existence of the integral \eqref{e:integration1} will follow immediately by \eqref{e:approximation} while \eqref{e:integration2} is a consequence of \eqref{e:sewing1_alt}.
	
	First, note that $\xi $ is indeed an element of $\CC_2^{\gamma ,\alpha } + \CC_{2}^{2\gamma ,\alpha -\gamma }$ and that moreover
	\[
	\|\xi \|_{\CC_2^{\gamma ,\alpha } + \CC_{2}^{2\gamma ,\alpha -\gamma }}\leq |y\cdot \delta X|_{\gamma ,\alpha } + |y':\XX|_{2\gamma ,\alpha -\gamma }
	\leq \varrho_\gamma (\X) \|y,y'\|_{\DD_{X,\alpha }^{2\gamma }}\,,
	\]
	by definition of $\DD_{X,\alpha }^{2\gamma }.$
	Next, thanks to Chen's relation we have the algebraic identity
	\begin{align*}
		\dd \xi _{t,u ,s} = \delta X_{t,u }\cdot R^y_{u ,s} + \XX_{t,u }:\delta y'_{u ,s}\,,
	\end{align*}
from which we infer
\begin{equation*}
|\delta \xi_{t,u, s}|_{\alpha -2\gamma }\leq [X]_\gamma |t-u |^\gamma |u -s|^{2\gamma }[R^{y}]_{2\gamma ,\alpha -2\gamma } + [\XX]_{2\gamma }|t-u |^{2\gamma }|u -s|^\gamma [\delta y']_{\gamma,\alpha -2\gamma }\,.
\end{equation*} 
Hence, it follows that $\delta \xi \in \CC_2^{2\gamma ,\gamma }(\BB_{\alpha -2\gamma })+ \CC_2^{\gamma ,2\gamma }(\BB_{\alpha -2\gamma })$ with
\begin{equation*}
\|\delta \xi\|_{ \CC_2^{2\gamma ,\gamma }(\BB_{\alpha -2\gamma }) +  \CC_2^{\gamma ,2\gamma }(\BB_{\alpha -2\gamma })}
\leq \varrho_\gamma (\X)\|y,y'\|_{\DD_{X,\alpha }^{2\gamma}}\,.
\end{equation*} 
Summing the above contributions, we find that $\xi \in \ZZ^{\gamma}_\alpha$,
and the conclusion of Theorem \ref{thm:sewing} yields the claimed estimate.
	\end{proof}

The following result not only describes the stability of integration but also tells us that the `rough 
convolution' improves the spatial regularity of the controlled rough path.

	\begin{cor}
	\label{cor:continuous}
	The integration map defined in Theorem \ref{integration} is continuous from $\mathcal D_{X,\alpha }^{2\gamma }$ into itself.
	In addition, for $T \leq 1$ and for every $\sigma ,\gamma'$ such that $0<\sigma <\gamma '\leq \gamma ,$
	the linear map
		\[
\DD^{2\gamma'}_{X,\alpha}([0,T]) \to \DD^{2\gamma'}_{X,\alpha+\sigma}([0,T]),\enskip \enskip 
		(y,y') \mapsto (z,z') := \Big(\int_0^\cdot S_{\cdot,u}y_u\cdot d\X_u,\, y\Big),
		\]
		is well-defined, bounded, and it satisfies the following estimate: 
		\begin{equation*}
			\| z,z'\|_{\DD^{2\gamma'}_{X,\alpha+\sigma}} \leq |y_0|_{\alpha } + C_{\gamma,\sigma} T^{\varepsilon} \big(1+\varrho_\gamma(\X)\big)\|y,y'\|_{\DD^{2\gamma'}_{X,\alpha}}\; ,
		\end{equation*}
		where $\varepsilon:=\min\{\gamma -\gamma',\gamma' -\sigma\}.$
	\end{cor}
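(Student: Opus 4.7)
The plan is to estimate separately the three contributions to $\|z,z'\|_{\DD^{2\gamma'}_{X,\alpha+\sigma}}$: the sup-norm $|z|_{0,\alpha+\sigma}$, the $\E^{0,\gamma'}_{\alpha+\sigma-\gamma'}$-norm of $z'=y$, and the $\E^{\gamma',2\gamma'}_{\alpha+\sigma}$-norm of the remainder $R^z$. The workhorse is Theorem~\ref{integration}, which for every $\beta\in[0,3\gamma')$ yields
\begin{equation*}
\ddh z_{t,s}=S_{t,s}(y_s\cdot\dd X_{t,s}+y'_s:\XX_{t,s})+E_{t,s}(\beta),
\end{equation*}
with $|E_{t,s}(\beta)|_{\alpha-2\gamma'+\beta}\lesssim\varrho_\gamma(\X)\|y,y'\|_{\DD^{2\gamma'}_{X,\alpha}}|t-s|^{3\gamma'-\beta}$.

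First I would bound $|z|_{0,\alpha+\sigma}$. Since $z_0=0$ one has $z_s=\ddh z_{s,0}$; applying the expansion with $\beta=2\gamma'+\sigma$ (admissible as $\sigma<\gamma'$) together with the smoothing \eqref{P:smoothing} gives
\begin{equation*}
|S_{s,0}(y_0 X_s)|_{\alpha+\sigma}\lesssim s^{\gamma-\sigma}|y_0|_\alpha[X]_\gamma,\quad |S_{s,0}(y'_0:\XX_{s,0})|_{\alpha+\sigma}\lesssim s^{2\gamma-\gamma'-\sigma}|y'_0|_{\alpha-\gamma'}[\XX]_{2\gamma},
\end{equation*}
and $|E_{s,0}|_{\alpha+\sigma}\lesssim s^{\gamma'-\sigma}\varrho_\gamma(\X)\|y,y'\|$. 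Each power of $s$ is at least $\varepsilon$, so for $T\leq 1$ we obtain $|z|_{0,\alpha+\sigma}\lesssim T^\varepsilon(1+\varrho_\gamma(\X))\|y,y'\|_{\DD^{2\gamma'}_{X,\alpha}}$. For $z'=y$, the H\"older seminorm $[\dd y]_{\gamma',\alpha+\sigma-2\gamma'}$ is estimated by splitting $\dd y=y'\cdot\dd X+R^y$, using $[X]_{\gamma'}\leq T^{\gamma-\gamma'}[X]_\gamma$ together with the embedding $\BB_{\alpha-\gamma'}\hookrightarrow\BB_{\alpha+\sigma-2\gamma'}$ for the first summand, and interpolating \eqref{interpolation} between $\BB_{\alpha-2\gamma'}$ and $\BB_{\alpha-\gamma'}$ for the second to produce $|R^y_{t,s}|_{\alpha+\sigma-2\gamma'}\lesssim|t-s|^{2\gamma'-\sigma}\|y,y'\|$; summing gives the $T^\varepsilon$-factor. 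The sup-norm $|y|_{0,\alpha+\sigma-\gamma'}$ is controlled via the embedding $\BB_\alpha\hookrightarrow\BB_{\alpha+\sigma-\gamma'}$, accounting for the $|y_0|_\alpha$ contribution in the stated bound.

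The principal step is the remainder. From $\dd z_{t,s}=\ddh z_{t,s}+(S_{t,s}-\id)z_s$ and the expansion of $\ddh z_{t,s}$ one obtains the four-term decomposition
\begin{equation*}
R^z_{t,s}=(S_{t,s}-\id)y_s\cdot\dd X_{t,s}+S_{t,s}(y'_s:\XX_{t,s})+E_{t,s}+(S_{t,s}-\id)z_s,
\end{equation*}
each of which must be estimated in $\BB_{\alpha+\sigma-\gamma'}$ with weight $|t-s|^{\gamma'}$ and in $\BB_{\alpha+\sigma-2\gamma'}$ with weight $|t-s|^{2\gamma'}$. The first term combines $|S_{t,s}-\id|_{\LL(\BB_\alpha,\BB_{\alpha-\mu})}\lesssim|t-s|^\mu$ for $\mu\in\{\gamma'-\sigma,2\gamma'-\sigma\}$ with $|\dd X_{t,s}|\lesssim|t-s|^\gamma$, leaving an excess $|t-s|^{\gamma-\sigma}\leq T^\varepsilon$. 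The second uses $|S_{t,s}|_{\LL(\BB_{\alpha-\gamma'},\BB_{\alpha+\sigma-\gamma'})}\lesssim|t-s|^{-\sigma}$ and $|\XX_{t,s}|\lesssim|t-s|^{2\gamma}$, leaving $|t-s|^{2\gamma-\sigma-\gamma'}\leq T^\varepsilon$; at the weaker level one simply embeds $\BB_{\alpha-\gamma'}\hookrightarrow\BB_{\alpha+\sigma-2\gamma'}$. The error $E$ is handled directly by Theorem~\ref{integration} with $\beta=\gamma'+\sigma$ (resp.\ $\beta=\sigma$), each producing a $T^{\gamma'-\sigma}\leq T^\varepsilon$ factor. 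Finally $(S_{t,s}-\id)z_s$ is bounded using the Step~1 estimate on $|z|_{0,\alpha+\sigma}$ combined with $|S_{t,s}-\id|_{\LL(\BB_{\alpha+\sigma},\BB_{\alpha+\sigma-\mu})}\lesssim|t-s|^\mu$ for $\mu\in\{\gamma',2\gamma'\}$.

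The main difficulty is the careful bookkeeping of the many interacting spatial and temporal exponents to guarantee a $T^\varepsilon$ factor in each term; the seemingly delicate piece $(S_{t,s}-\id)z_s$ is not in fact circular, since the bound on $|z|_{0,\alpha+\sigma}$ has already been obtained in Step~1 from Theorem~\ref{integration} alone. Summing all contributions and using $|y_0|_\alpha,|y'_0|_{\alpha-\gamma'}\leq\|y,y'\|_{\DD^{2\gamma'}_{X,\alpha}}$ yields the claimed estimate. The first assertion, continuity of the integration map as a self-map of $\DD^{2\gamma}_{X,\alpha}$, follows from the same decomposition taken with $\sigma=0$ and $\gamma'=\gamma$: only boundedness is required there, so the $T^\varepsilon$ smallness plays no role.
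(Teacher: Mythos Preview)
Your proposal is correct and follows essentially the same four-term decomposition of $R^z$ as the paper; the only organizational difference is that you establish $|z|_{0,\alpha+\sigma}$ first and then reuse it to bound $(S_{t,s}-\id)z_s$, whereas the paper expands $z_s=\int_0^s S_{s,u}y_u\cdot d\X_u$ directly at that point via Theorem~\ref{integration} (which is the same computation in a different order).

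One small imprecision worth fixing: the embedding $\BB_\alpha\hookrightarrow\BB_{\alpha+\sigma-\gamma'}$ only gives $|y|_{0,\alpha+\sigma-\gamma'}\le|y|_{0,\alpha}$, not $|y_0|_\alpha$, so as written this would produce $\|y,y'\|_{\DD^{2\gamma'}_{X,\alpha}}$ without a $T^\varepsilon$ factor rather than the stated $|y_0|_\alpha$. To recover the precise form of the bound you must write $y_t=y_0+\delta y_{t,0}$, bound $|y_0|_{\alpha+\sigma-\gamma'}\le|y_0|_\alpha$ by embedding, and extract a $T^{\gamma'-\sigma}$ factor from $|\delta y_{t,0}|_{\alpha+\sigma-\gamma'}$ by interpolating between the $\BB_\alpha$ bound (no decay) and the $\BB_{\alpha-\gamma'}$ bound (decay $t^{\gamma'}$ from \eqref{y:regularity}). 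The paper handles this step tersely as well, but the splitting $z'_t=z'_0+\delta z'_{t,0}$ is the mechanism.
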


	\begin{proof}
The first step is to show that $z$ is indeed controlled by $X$ if one lets $z'=y$. 
For this we need to evaluate the remainder $R^z_{t,s}:=\delta z_{t,s}-y_s\cdot \delta X_{t,s}$ and show that it 
has the correct regularity.
	Denote by
		\begin{equation*}
		\mathscr{R}_{t,s} =\int_s^tS_{t,u}y_u\cdot d\X_u - S_{t,s}(y_s \cdot \delta X_{t,s}+ y'_s:\XX_{t,s})\,,\quad (t,s)\in\Delta_2 ,
		\end{equation*}
		where the first integral is understood in the sense of Theorem \ref{integration}.
		Using the fact that $\varrho_{\gamma'} (\X) \leq \varrho_{\gamma} (\X)$ for $T \leq 1$ and using the estimate \eqref{e:integration2} with $\beta = \sigma +(2-i)\gamma' $ for $i\in\{1,2\},$ we get
		\begin{align*}
		|\mathscr R_{t,s}|_{\alpha +\sigma -i\gamma'}
		= |\mathscr R_{t,s}|_{\alpha-2\gamma'  +\sigma +(2-i)\gamma' }
		&\lesssim
		\varrho_{\gamma'} (\X)\|y,y'\|_{\mathcal D_{X,\alpha }^{2\gamma' }}|t-s|^{\gamma' - \sigma + i\gamma' }
		\\
		&\leq \varrho_{\gamma} (\X)\, \|y,y'\|_{\mathcal D_{X,\alpha }^{2\gamma' }} |t-s|^{i\gamma' } T^{\gamma'-\sigma } 
		\,,
		\end{align*}
uniformly over $(t,s)\in\Delta _2.$
		Next, observe that
		\[
		\begin{aligned}
		R_{t,s}^z
		&\equiv \int_s^t S_{t,u}y_u\cdot d\X_u - y_s\cdot \delta X_{s,t} +(S_{t,s}-\id)\int_0^s S_{s,u}y_u \cdot d\X_u 
		\\
		&=  \left(\int_s^t S_{t,u}y_u\cdot d\X_u - S_{t,s}(y_s \cdot \delta X_{t,s} + y'_s:\XX_{t,s})\right) 
		\\
		&\quad \quad 
		+ (S_{t,s}-\id)y_s\cdot \delta X_{t,s} +(S_{t,s}-\id)\int_0^sS_{s,u}y_u\cdot d\X_u + S_{t,s}y'_s:\XX_{t,s}\,,
		\\
		&=: \mathscr R_{t,s} + \mathrm{I}_{t,s} + \mathrm{II}_{t,s} + \mathrm{III}_{t,s}\,.
		\end{aligned}
		\]
Using the smoothing property \eqref{smoothing_S} for $S$, we see that for $i=1,2$:
\[
|\mathrm{I}_{t,s}|_{\alpha +\sigma -i\gamma '}
\leq [X]_\gamma |t-s|^{\gamma }|S_{t,s}-\id|_{\mathcal L(\BB_{\alpha },\BB_{\alpha -(i\gamma '-\sigma )})}|y_s|_{\alpha }
\lesssim |t-s|^{i\gamma '}T^{\gamma -\sigma }|y|_{0,\alpha }\,.
\]
For the second term, we have, thanks to Theorem \ref{integration};
\[\begin{aligned}
|\mathrm{II}_{t,s}|_{\alpha +\sigma -i\gamma '} 
&\lesssim |S_{t,s}-\id|_{\mathcal L(\BB_{\alpha+\sigma} ,\BB_{\alpha+\sigma -i\gamma '})}|S_{s,0}y_0|_{\alpha+\sigma}[X]_\gamma s^{\gamma } 
\\
&\quad \quad 
+|S_{t,s}-\id|_{\mathcal L(\BB_{\alpha +\sigma} ,\BB_{\alpha+\sigma -i\gamma '})}|S_{s,0}y'_0|_{\alpha +\sigma}[\XX]_{2\gamma} s^{2\gamma } 
\\
&\quad \quad \quad 
+|S_{t,s}-\id|_{\LL(\BB_{\alpha+\sigma},\BB_{\alpha+\sigma -i\gamma '})}|\mathscr R_{s,0}|_{\alpha+\sigma}
\\
&\lesssim \varrho_\gamma(\X)
|t-s|^{i\gamma '} \Big\{s^{\gamma-\sigma} |y|_{0,\alpha }
+ |y'|_{0,\alpha -\gamma'} s^{2\gamma -\gamma ' - \sigma}
+ s^{\gamma'-\sigma}\|y,y'\|_{\mathcal D_{X,\alpha }^{2\gamma '}}
\Big\}\\
&\lesssim \varrho_\gamma(\X) \|y,y'\|_{\mathcal D_{X,\alpha }^{2\gamma '}}
|t-s|^{i\gamma '} T^{\gamma'-\sigma}\;.
\end{aligned}
\]
Similarly, we have
\begin{align*}
|\mathrm{III}_{t,s}|_{\alpha +\sigma -\gamma '}\;
&\leq [\XX]_{2\gamma }|t-s|^{2\gamma }|S_{t,s}|_{\mathcal L(\BB_{\alpha -\gamma '},\BB_{\alpha -\gamma ' + \sigma})}|y'_s|_{\alpha -\gamma '}
\\
&\lesssim \varrho_\gamma (\X)|t-s|^{\gamma'}|y'|_{0,\alpha -\gamma '} T^{2\gamma-\gamma'-\sigma}
\\
|\mathrm{III}_{t,s}|_{\alpha + \sigma - 2\gamma' }
&\leq [\XX]_{2\gamma }|t-s|^{2\gamma }|S_{t,s}y'_s|_{\alpha - \gamma '}
\\
&\lesssim \varrho_\gamma (\X)|t-s|^{2\gamma'}|y'|_{0,\alpha -\gamma '} T^{2\gamma-2\gamma'}\,.
\end{align*}
Combining the above estimates, we obtain:
\begin{equation*}
		\max_{i=1,2}[R^z]_{i\gamma ',\alpha+\sigma-i\gamma'} 
		\lesssim \varrho_\gamma (\X) T^{\varepsilon} \|y,y'\|_{\mathcal D_{X,\alpha }^{2\gamma '}}\,.
\end{equation*}
In particular, we see that $z$ is controlled by $X$ according to $\BB_\alpha$ and that $z'=y$.

Next, to estimate the H\"older norm of the Gubinelli derivative, we first observe that
\[
[R^y]_{\gamma ',\alpha +\sigma -2\gamma '}\leq C |R^y|_{\E^{\gamma',2\gamma'}_\alpha} T^{\gamma '-\sigma }\,,
\]
as can be easily seen by the interpolation inequality \eqref{interpolation2}.
Then, using that $\gamma '>\sigma $ we have
\[\begin{aligned}
|\delta z'_{t,s}|_{\alpha+\sigma-2\gamma'} 
&= |\delta y_{t,s}|_{\alpha+\sigma-2\gamma'} \leq |y'_s|_{\alpha+\sigma-2\gamma'}|\delta X_{t,s}|+|R^y_{t,s}|_{\alpha+\sigma-2\gamma'}\,.
\\
&\lesssim [X]_{\gamma }|y'|_{0,\alpha -\gamma '} |t-s|^{\gamma '} T^{\gamma -\gamma '} +|t-s|^{\gamma '}|R^y|_{\E^{\gamma',2\gamma'}_\alpha}T^{\gamma '-\sigma }\,,
\end{aligned}
\]
which, by writing $z'_t=z'_0 + \delta z'_{t,0}$, yields the estimate
\begin{align*}
|z'|_{\E^{0,\gamma '}_{\alpha+\sigma -\gamma'}}\equiv 
|z'|_{0,\alpha +\sigma -\gamma '}+
[\delta z']_{\gamma ',\alpha+\sigma-2\gamma'} 
\lesssim 
|y_0|_{\alpha } + \varrho_\gamma (\X)T^{\varepsilon}\|y,y'\|_{\mathcal D_{X,\alpha }^{2\gamma '}}\,.
\end{align*}

\noindent To conclude that $(z,z')$ is controlled by $X$ according to $\BB_{\alpha+\sigma}$ it remains to estimate $|z|_{0,\alpha+\sigma}$, for which we use:
\begin{align*}
	z_t = \mathscr R_{t,0} + S_{t,0} y_0 \cdot \delta X_{t,0} + S_{t,0} y'_0 : \XX_{t,0}\,.
\end{align*}
Therefore, using~\eqref{e:integration2}, smoothing properties of the propagator and $\varrho_{\gamma'} (\X) \leq \varrho_{\gamma}(\X)$ we get
\begin{align*}
	|z_t|_{\alpha+\sigma} \lesssim \varrho_{\gamma}(\X) \|y,y'\|_{\mathcal D_{X,\alpha }^{2\gamma '}} t^{3\gamma' - 2\gamma' - \sigma} + |y|_{0,\alpha} [X]_\gamma t^{\gamma - \sigma} + |y'|_{0,\alpha-\gamma'} [\XX]_{2\gamma} t^{2\gamma -\gamma' - \sigma},
\end{align*}
which implies that $|z|_{0,\alpha+\sigma} \lesssim \varrho_{\gamma}(\X) \|y,y'\|_{\mathcal D_{X,\alpha }^{2\gamma '}} T^\varepsilon$, thus finishing the proof.
	\end{proof}

We are now going to see that a controlled path composed with some sufficiently regular function is again a 
controlled path. 
	\begin{lem} \label{lem:composition}
		Let $\gamma \in (1/3,1/2]$, $\alpha \in \R$ and fix $\sigma \geq 0$. Let $F\in \CC^2_{\alpha -2\gamma ,-\sigma }(\BB)$ be some non-linearity with bounded derivatives up to second order.
		For $(y,y') \in \DD^{2\gamma}_{X,\alpha }$, define
		\begin{align*}
			(z_t,z'_t) := (F(y_t),DF(y_t)\circ y'_t)\,,\quad 
		\text{for every}\enskip t\in [0,T]\,.
		\end{align*}
		Then the following assertions are true.
		\begin{enumerate}[label=(\roman*)]
		 \item \label{comp:i}
		One has $(z,z') \in \DD^{2\gamma}_{X,\alpha-\sigma}$ and moreover:
		\begin{equation} \label{e:compos1}
			\|z,z'\|_{\DD^{2\gamma}_{X,\alpha-\sigma}} \lesssim \|F\|_{\CC^2}\big(1+\varrho_\gamma(\X)\big)^2\|y,y'\|_{\DD^{2\gamma}_{X,\alpha}}(1+\|y,y'\|_{\DD^{2\gamma }_{X,\alpha}})\,.
		\end{equation}
		
		\item \label{comp:ii}
		If we assume further that $F\in \CC^3_{\alpha -2\gamma ,-\sigma }(\BB)$ with bounded third derivative, and if $(\tilde z,\tilde z'):=(F(\tilde y),DF(\tilde y)\circ \tilde y')$ for another such pair $(\tilde y,\tilde y')\in\mathcal D_{X,\alpha }^{2\gamma '},$ then the following estimate holds
		\begin{align*}
		\|z-\tilde z,z'-\tilde z'\|_{\DD^{2\gamma}_{X,\alpha-\sigma}} 
			 &\lesssim
			 \|F\|_{\CC^3}\big(1+\varrho_\gamma(\X)\big)^2\|y-\tilde y,y'-\tilde y'\|_{\DD^{2\gamma }_{X,\alpha}}
			 \\
			&\quad \quad \quad \quad \quad 
			\times(1+\|y,y'\|_{\DD^{2\gamma}_{X,\alpha}}+\|\tilde y,\tilde y'\|_{\DD^{2\gamma }_{X,\alpha}})^2\,.
		\end{align*}
		\end{enumerate}
	Where in both~\ref{comp:i} and~\ref{comp:ii} we set $\|F\|_{\CC^k} = \max \{\|F\|_{\CC^k(\BB_{\alpha-\gamma}, \BB_{\alpha-\gamma-\sigma})}, \|F\|_{\CC^k(\BB_{\alpha-2\gamma}, \BB_{\alpha-2\gamma-\sigma})}\}$.
	\end{lem}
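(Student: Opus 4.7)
The plan is to extract an explicit formula for the remainder $R^z$ through a second-order Taylor expansion of $F$ and then bound each resulting piece, using the controlled-path information on $(y,y')$ and the assumed regularity of $F$. Starting from
\[
F(y_t)=F(y_s)+DF(y_s)\delta y_{t,s}+\int_0^1(1-\tau)D^2F(y_s+\tau\delta y_{t,s})[\delta y_{t,s},\delta y_{t,s}]\,d\tau
\]
and subtracting $z'_s\cdot\delta X_{t,s}=DF(y_s)[y'_s\cdot\delta X_{t,s}]$ gives the key identity
\[
R^z_{t,s}=DF(y_s)\,R^y_{t,s}+\int_0^1(1-\tau)D^2F(y_s+\tau\delta y_{t,s})[\delta y_{t,s},\delta y_{t,s}]\,d\tau.
\]

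For the linear piece, $F\in\CC^2_{\alpha-2\gamma,-\sigma}$ ensures that $DF(y_s)$ acts boundedly from $\BB_{\alpha-k\gamma}$ into $\BB_{\alpha-k\gamma-\sigma}$ for both $k=1,2$, with operator norm controlled by $\|F\|_{\CC^2}$ uniformly on the bounded range of $y$. Combined with~\eqref{interpolation2} in the form $|R^y_{t,s}|_{\alpha-k\gamma}\leq|R^y|_{\E^{\gamma,2\gamma}_\alpha}|t-s|^{k\gamma}$, this directly yields the $\gamma$-H\"older bound in $\BB_{\alpha-\sigma-\gamma}$ and the $2\gamma$-H\"older bound in $\BB_{\alpha-\sigma-2\gamma}$ required by the definition of $\E^{\gamma,2\gamma}_{\alpha-\sigma}$. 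For the quadratic piece, I would view $D^2F(\cdot)$ as a bilinear map $\BB_{\alpha-\gamma}^2\to\BB_{\alpha-\gamma-\sigma}$ and insert the estimate $|\delta y_{t,s}|_{\alpha-\gamma}\lesssim(1+\varrho_\gamma(\X))\|y,y'\|_{\DD^{2\gamma}_{X,\alpha}}|t-s|^\gamma$ coming from~\eqref{y:regularity}; this produces a $|t-s|^{2\gamma}$ bound in $\BB_{\alpha-\sigma-\gamma}$ (and hence, by embedding, in $\BB_{\alpha-\sigma-2\gamma}$ as well).

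Next, for the Gubinelli derivative $z'_t=DF(y_t)y'_t$, the uniform-in-time bound in $\BB_{\alpha-\sigma-\gamma}$ is immediate from $F\in\CC^2_{\alpha-2\gamma,-\sigma}$ and $y'\in\CC(\BB_{\alpha-\gamma})$. For the $\gamma$-H\"older regularity in $\BB_{\alpha-\sigma-2\gamma}$, I would split
\[
\delta z'_{t,s}=DF(y_t)\,\delta y'_{t,s}+\bigl(DF(y_t)-DF(y_s)\bigr)y'_s,
\]
bound the first summand by $\|F\|_{\CC^2}[\delta y']_{\gamma,\alpha-2\gamma}|t-s|^\gamma$, and handle the second via the Taylor expansion $DF(y_t)-DF(y_s)=\int_0^1 D^2F(y_s+\tau\delta y_{t,s})[\delta y_{t,s}]\,d\tau$ combined once again with the bound on $|\delta y_{t,s}|_{\alpha-\gamma}$. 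Collecting all contributions yields~\eqref{e:compos1}, with the factors $(1+\varrho_\gamma(\X))^2$ and $(1+\|y,y'\|_{\DD^{2\gamma}_{X,\alpha}})$ arising from the products of norms involving $\delta y$ and $y'$.

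Part~(ii) proceeds along the same lines applied to the differences $(z-\tilde z,z'-\tilde z')$. The key new ingredient is that in order to bound $R^z-R^{\tilde z}$ and $\delta z'-\delta\tilde z'$ one must Taylor-expand $DF$ and $D^2F$ around $\tilde y$, which requires one additional derivative and thus the $\CC^3$ assumption. Each resulting piece is a product of a factor linear in $\|y-\tilde y,y'-\tilde y'\|_{\DD^{2\gamma}_{X,\alpha}}$ and a factor at most quadratic in $\|y,y'\|_{\DD^{2\gamma}_{X,\alpha}}+\|\tilde y,\tilde y'\|_{\DD^{2\gamma}_{X,\alpha}}$. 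The main technical obstacle throughout is not conceptual but one of bookkeeping: at every step one must consistently check the scale $\BB_\theta$ at which each differential of $F$ is evaluated and applied, so that the outputs land exactly in $\BB_{\alpha-\sigma-\gamma}$ or $\BB_{\alpha-\sigma-2\gamma}$ as needed; this is precisely why the hypothesis $F\in\CC^k_{\alpha-2\gamma,-\sigma}$ has to be imposed at the lowest appearing level $\alpha-2\gamma$.
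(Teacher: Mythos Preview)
Your proposal is correct and follows essentially the same approach as the paper: the paper also decomposes $R^z_{t,s}=DF(y_s)R^y_{t,s}+T_{t,s}$ with $T_{t,s}$ the second-order Taylor remainder, bounds both pieces at the levels $\BB_{\alpha-\sigma-i\gamma}$ for $i=1,2$, and handles $\delta z'$ by the same splitting (the paper writes $\delta z'_{t,s}=DF(y_s)\delta y'_{t,s}+(DF(y_t)-DF(y_s))y'_t$, which is the mirror of your decomposition and works identically). Part~(ii) is treated the same way, inserting one more Taylor expansion to compare $DF(y)$ with $DF(\tilde y)$ and $D^2F(y)$ with $D^2F(\tilde y)$.
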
 

	\begin{proof}
First, observe that because of the continuity of $F$ and the inclusion $\BB_\alpha\subset 
\BB_{\alpha-\gamma}$,  we have $(z,z') \in \CC(0,T;\BB_{\alpha-\sigma}) \times 
\CC(0,T;\BB^{d}_{\alpha-\sigma-\gamma})$. We can view $D^kF(y_t)$ as an element of 
$\LL(\BB_{\alpha-i\gamma}^{\otimes k}, \BB_{\alpha-i\gamma -\sigma})$ for $k=1,2,3$ and $i=1,2$. 
With this at hand, we write $\delta z'_{t,s}= DF(y_s)\circ \delta y'_{t,s} +(DF(y_t)-DF(y_s))\circ y'_t,$ and since 
$|\cdot|_{\alpha-2\gamma} \leq |\cdot|_{\alpha-\gamma}$ we obtain:
\begin{align*}
	[\delta z']_{\gamma ,\alpha-\sigma-2\gamma}
	&\lesssim |DF|_{\mathcal L(\BB_{\alpha -2\gamma },\BB_{\alpha -2\gamma -\sigma })}[\delta y']_{\gamma,\alpha-2\gamma}
	\\
	&\qquad
	+ |D^2F|_{\LL(\BB_{\alpha-2\gamma}^{\otimes 2},\BB_{\alpha-2\gamma -\sigma})} [\delta y]_{\gamma, \alpha-2\gamma} |y'|_{0,\alpha -\gamma }
	\\
	&\lesssim \|F\|_{\CC^2} \big(1+\varrho_\gamma(\X)\big) \|y,y'\|_{\DD^{2\gamma}_{X,\alpha}} (1 + \|y,y'\|_{\DD^{2\gamma}_{X,\alpha}})\;,
\end{align*}
where we used~\eqref{y:regularity} to estimate $[\delta y]_{\gamma, \alpha-2\gamma}$.
Next, we estimate the remainder term 
\[
R^z_{t,s} := F(y_t) - F(y_s) - DF(y_s)\circ y'_s\cdot \dd X_{t,s}\;,
\]
and show that it belongs to $\CC_2^{2\gamma }(0,T;\BB_{\alpha-\sigma-2\gamma }) \cap \CC_2^{\gamma }(0,T;\BB_{\alpha-\sigma-\gamma })$.
We rewrite $R^z$ as
\begin{align*}
	R^z_{t,s} & = F(y_t)-F(y_s)-DF(y_s)\circ\delta y_{t,s} +DF(y_s)\circ R^y_{t,s} \\
		& = T_{t,s} +DF(y_s)\circ R^y_{t,s}  \;,
\end{align*}
where, by Taylor's formula
		\[
T_{t,s}:= \left(\int_{0}^1\int_{0}^1D^2F(y_s+\theta\theta ' \delta y_{t,s})d\theta ' \theta d\theta  \right)\circ(\delta y_{t,s} \otimes \delta y_{t,s})\,.
\]
 		By definition of the spaces $\CC_{\alpha - 2\gamma, -\sigma }^2$, we have for $i=1,2$:
		\begin{align*}
		|R^z|_{i \gamma,\alpha-\sigma-i\gamma} 
		&\leq 
		|D^2F|_{\mathcal L(\BB_{\alpha -i\gamma }^{\otimes 2},\BB_{\alpha -i\gamma -\sigma })}[\delta y]^2_{\gamma,\alpha-i\gamma}
		\\
		&\quad \quad \quad \quad \quad 
		+|DF|_{\mathcal L(\BB_{\alpha -i\gamma },\BB_{\alpha -i\gamma -\sigma })}[R^y]_{i \gamma,\alpha-i\gamma} 
		\\
		&\lesssim \|F\|_{\CC^2} \big(1+\varrho_\gamma(\X)\big)^2\|y,y'\|_{\DD^{2\gamma}_{X,\alpha}}(1+\|y,y'\|_{\DD^{2\gamma}_{X,\alpha}})\;,
		\end{align*}
which implies \ref{comp:i}.\\

For \ref{comp:ii}, we write (with obvious notations)
\[
R^{z}_{t,s}-R^{\tilde z}_{t,s}
=T_{t,s} -\tilde T_{t,s} + (DF(y_s)-DF(\tilde y_s))\circ R^y_{t,s} + DF(\tilde y_s)\circ (R^y_{t,s}-R^{\tilde y}_{t,s})\,.
\]
For $i=1,2$ we have
\[\begin{aligned}
|T_{t,s} -\tilde T_{t,s}|_{\alpha -\sigma -i\gamma }
&\leq |D^3F|_{\LL(\BB^{\otimes 3}_{\alpha -i\gamma },\BB_{\alpha -i\gamma -\sigma })}| y- \tilde y|_{0,\alpha - i \gamma } |\delta y_{t,s}|^2_{\alpha -i\gamma }
\\
& \quad + |D^2F|_{\LL(\BB^{\otimes 2}_{\alpha -i\gamma },\BB_{\alpha -i\gamma -\sigma })}|\delta y_{t,s} - \delta \tilde y_{t,s}|_{\alpha - i \gamma} |\delta \tilde y_{t,s}|_{\alpha - i \gamma} \\
&\lesssim \|F\|_{\CC^3}| y-\tilde y|_{\E^{0,\gamma}_\alpha }(1  + [ y]_{\gamma ,\alpha - \gamma } +  [\tilde y]_{\gamma ,\alpha - \gamma })^2
|t-s|^{2\gamma }\,.
\end{aligned}
\]
Similarly,
\[
|(DF(y_s)-DF(\tilde y_s))\circ R^y_{t,s}|_{\alpha -i\gamma }\leq |D^2F|_{\mathcal L(\BB_{\alpha -i\gamma }^{\otimes 2},\BB_{\alpha -i\gamma -\sigma })} |y-\tilde y|_{0,\alpha } |R^y|_{\E^{\gamma,2\gamma}_\alpha} |t-s|^{i\gamma }\;,
\]
while
\[
|DF(\tilde y_s)\circ (R^y_{t,s}-R^{\tilde y}_{t,s})|_{\alpha -i\gamma -\sigma }
\leq |DF|_{\LL(\BB_{\alpha -i\gamma },\BB_{\alpha -i\gamma -\sigma })}|R^y-R^{\tilde y}|_{\E^{\gamma,2\gamma}_\alpha}\;.
\]
Summing the above three estimates yields the correct bound for $|R^{z}-R^{\tilde z}|_{\E^{\gamma,2\gamma}_{\alpha-\sigma}}$.

\noindent For the Gubinelli derivatives, we have
\[\begin{aligned}
&\delta ( D F(y)y'- D F(\tilde y)\tilde y^\prime)_{t,s}
\\
&= 
\int _0^1\left(D^2 F(y_s+\theta \delta y_{t,s}) - D ^2F(\tilde y_s+\theta \delta \tilde y_{t,s})\right)d\theta 
\circ (\delta y_{t,s} \otimes y'_t)
\\
&\quad \quad 
+\int_0^1 D ^2F(\tilde y_s+\theta \delta \tilde y_{t,s})d\theta \circ \big((\dd y_{t,s} - \dd \tilde{y}_{t,s}) \otimes y'_t\big)
\\
&\quad \quad 
+\int_0^1 D ^2F(\tilde y_s+\theta \delta \tilde y_{t,s})d\theta \circ \big(\delta \tilde y_{t,s} \otimes (y'_t-\tilde y'_t)\big)
\\
&\quad \quad 
+ ( D F(y_s)- D F(\tilde y_s))\circ \delta y'_{t,s} 
+ D F(\tilde y_s)\circ(\delta y_{t,s}-\delta \tilde y'_{t,s})\,.
\end{aligned}
\]
This gives
\[
|\delta z'_{t,s}-\delta \tilde z'_{t,s}|_{\alpha -2\gamma }
\lesssim \|F\|_{\CC^3} \|y-\tilde y, y'-\tilde y'\|_{\DD_{X,\alpha }^{2\gamma }}\Big(1+\|y,y' \|_{\DD_{X,\alpha }^{2\gamma }} + \|\tilde{y},\tilde{y}' \|_{\DD_{X,\alpha }^{2\gamma }}\Big)^2\,.
\]
This finishes the proof of Lemma \ref{lem:composition}.
\end{proof}

\section{Equations with subcritical multiplicative noise: proof of Theorem \ref{thm:main}}
\label{sec:evolution}
	In this section we fix $\gamma \in(1/3,1/2]$, $\alpha \in \R$, a rough path $\X = (X, \XX) \in \cC^\gamma(0,T;\R^d)$, and we let $\sigma \in[0,\gamma)$. 
	We will address the proof of local existence and uniqueness for the rough PDE
	\begin{equation} \label{e:rpde2}
	du_t = L_tu_tdt +N(u_t)dt + \sum\nolimits_{i=1}^dF_i(u_t)d\X^i_t\quad \text{and}\quad  u_0 = x \in \BB_\alpha,
	\end{equation}
	under suitable conditions on the non-linearities.
	The proof of Theorem \ref{thm:main} is a simple consequence of Theorem \ref{RPDE} below. Further 
	properties of the solution map will be also given in Theorems \ref{stab_sol} and \ref{smoothing}.

	In the sequel, an equation of the form \eqref{e:rpde2} will be referred to as `subcritical' provided that
	the function $F$ sends $\BB_\alpha$ to $\BB_{\alpha-\sigma}$ with some $\sigma \in[0, \gamma).$
	Concrete examples of subcritical equations will be given in Section \ref{sec:examples}.

	\subsection{Solutions to subcritical RPDEs}

	For $i=1,\dots d,$ let $F_i \in \CC^2_{\alpha -2\gamma,-\sigma},$ and for each $(y,y') \in 
	\DD^{2\gamma}_{X,\alpha }$ let
	$$(z_t, z'_t) := \Big(\int_0^{t}S_{t,s}F(y_s)\cdot d\X_s, F(y_t )\Big)\,,\quad \text{for every}\enskip 
	t\in[0,T]\,.$$
	Then, Lemma \ref{lem:composition} together with Corollary \ref{cor:continuous} gives us that 
	$(z,z')$ is again an element of the controlled paths space $\DD^{2\gamma}_{X,\alpha }$.
	This is due to the fact that, though $F$ reduces the spatial regularity by $\sigma$, the lost regularity is 
	recovered from the smoothing properties of the integration map associated with $S$.
	This observation suggests that we might be successful in applying a Banach fixed point argument in order 
	to solve \eqref{e:rpde2} locally.

	\begin{thm}[Local solution of subcritical RPDEs] \label{RPDE}
	Fix $\alpha \in\R,$ $\gamma \in (1/3,1/2]$ and $\sigma \in[0,\gamma ).$
	Assume that we are given a non-linearity $F = (F_1,\dots,F_d)$ such that
	$F_i \in \CC^3_{\alpha -2\gamma,-\sigma}(\BB)$ for $i=1,\dots ,d$ and $N \in \Lip_{\alpha,-\delta}(\BB)$ for some $n\geq 1$ and $1 > \delta \geq 0$.

For every $x \in \BB_\alpha ,$
 there exists $0 < \tau \leq T$ and a unique $(u,u') \in \DD^{2\gamma}_{X,\alpha }([0,\tau))$ such that $u' = F(u)$ and
		\begin{equation} \label{e:rpde3}
		u_t = S_{t,0}x + \int_0^t S_{t,r}N(u_r)dr+ \int_0^t S_{t,r}F(u_r)\cdot d\X_r\, , \quad t<\tau. 
		\end{equation}
	\end{thm}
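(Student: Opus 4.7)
The plan is to prove Theorem \ref{RPDE} by a Banach fixed point argument applied to the solution map
\[
\mathcal{M}(y,y') := \Big(S_{\cdot,0}x + \int_0^{\cdot}S_{\cdot,r}N(y_r)\,dr + \int_0^{\cdot}S_{\cdot,r}F(y_r)\cdot d\X_r,\ F(y)\Big),
\]
defined on a closed ball
\[
B_R^\tau := \{(y,y')\in \DD^{2\gamma}_{X,\alpha}([0,\tau]) : y_0=x,\ y'_0=F(x),\ \|y,y'\|_{\DD^{2\gamma}_{X,\alpha}}\leq R\},
\]
for suitably chosen $R$ and $\tau$. Since $B_R^\tau$ is closed in a Banach space, it suffices to check that $\mathcal M$ maps $B_R^\tau$ to itself and is a contraction for $\tau$ small.

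First I would verify that $\mathcal M(y,y')$ is a well-defined element of $\DD^{2\gamma}_{X,\alpha}$. The free term $t\mapsto S_{t,0}x$ belongs to $\DD^{2\gamma}_{X,\alpha}$ with Gubinelli derivative $0$, its norm being controlled by $|x|_\alpha$ via the smoothing estimates \eqref{P:smoothing}. The drift term $\int_0^{\cdot}S_{\cdot,r}N(y_r)dr$ is again a controlled path with zero Gubinelli derivative; since $N(y_r)\in\BB_{\alpha-\delta}$ with $\delta<1$ and $|S_{t,r}|_{\LL(\BB_{\alpha-\delta},\BB_\alpha)}\lesssim |t-r|^{-\delta}$, a direct bound combined with the interpolation \eqref{interpolation2} gives a control in $\E^{\gamma,2\gamma}_\alpha$ by $C\,\tau^{1-\delta}(1+|y|_{0,\alpha})$. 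For the rough-integral term, Lemma \ref{lem:composition}(i) shows that $(F(y),DF(y)\circ y')\in \DD^{2\gamma}_{X,\alpha-\sigma}$, and then Corollary \ref{cor:continuous}, applied with $\gamma'\in(\sigma,\gamma)$ and the shift $\sigma$, yields the rough convolution as an element of $\DD^{2\gamma'}_{X,\alpha}$ with a prefactor $C\,\tau^\varepsilon(1+\varrho_\gamma(\X))$, $\varepsilon:=\min(\gamma-\gamma',\gamma'-\sigma)>0$; the fact that it lies in the original space $\DD^{2\gamma}_{X,\alpha}$ follows from interpolating between the uniform $\BB_{\alpha-\sigma}$-bound on $F(y)$ and the $2\gamma$-H\"older estimate in $\BB_{\alpha-2\gamma}$ coming from the corollary.

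Combining the three estimates and choosing first $R$ depending only on $|x|_\alpha$, $|F(x)|_{\alpha-\sigma}$ and the data, then $\tau>0$ so small that the overall prefactor $C_R\,\tau^\varepsilon$ is $\leq \tfrac12$, one obtains $\mathcal M(B_R^\tau)\subset B_R^\tau$. The contraction property is obtained by applying exactly the same chain of estimates to $\mathcal M(y,y')-\mathcal M(\tilde y,\tilde y')$: Lemma \ref{lem:composition}(ii) (this is where the $\CC^3$ assumption on $F$ is used) delivers the local Lipschitz bound on $(y,y')\mapsto (F(y),DF(y)\circ y')$ in $\DD^{2\gamma}_{X,\alpha-\sigma}$, Lipschitz continuity of $N$ handles the drift, and Corollary \ref{cor:continuous} absorbs everything into a constant of size $C\,\tau^\varepsilon(1+R)^2\,(1+\varrho_\gamma(\X))^2$. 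Shrinking $\tau$ further makes this $<1/2$, and Banach's fixed point theorem produces a unique fixed point in $B_R^\tau$, which by construction satisfies \eqref{e:rpde3} on $[0,\tau]$.

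Finally, uniqueness in $\DD^{2\gamma}_{X,\alpha}([0,\tau))$ on a maximal interval is a standard matter: any two such solutions agree on $[0,\tau]$ by the local uniqueness, and the argument can be iterated from the latest agreement time by restarting from the common value in $\BB_\alpha$; taking the supremum of all existence times gives the maximal $\tau\in(0,T]$. The main difficulty lies in the well-definedness step: one must carefully choose $\gamma'\in(\sigma,\gamma)$ in Corollary \ref{cor:continuous} so as to obtain a genuinely positive time exponent $\varepsilon$, and then check that the output still lives in $\DD^{2\gamma}_{X,\alpha}$ and not just in the weaker space $\DD^{2\gamma'}_{X,\alpha}$; the rest is routine bookkeeping through Lemma \ref{lem:composition} and Corollary \ref{cor:continuous}.
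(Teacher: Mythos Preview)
Your overall strategy—Banach fixed point via Lemma~\ref{lem:composition} and Corollary~\ref{cor:continuous}—is exactly the paper's, but there is one genuine gap in how you set it up. You try to run the contraction directly in $\DD^{2\gamma}_{X,\alpha}$, and you correctly note that Corollary~\ref{cor:continuous} only produces the crucial small factor $T^{\varepsilon}$, $\varepsilon=\min(\gamma-\gamma',\gamma'-\sigma)$, when $\gamma'<\gamma$. The output of that corollary is then controlled only in the \emph{weaker} norm $\|\cdot\|_{\DD^{2\gamma'}_{X,\alpha}}$. You flag this as a ``well-definedness'' issue, but it is a contraction issue: even if the image lies in $\DD^{2\gamma}_{X,\alpha}$ (which it does, by the first sentence of Corollary~\ref{cor:continuous}), you have no small prefactor in the $\DD^{2\gamma}_{X,\alpha}$-norm of the difference, and the interpolation you invoke does not manufacture one. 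With $\gamma'=\gamma$ the corollary gives $\varepsilon=0$, so the map is not seen to be a contraction in $\DD^{2\gamma}_{X,\alpha}$ by this route.

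The paper's fix is simple and you should adopt it: perform the fixed point in the \emph{larger} space $\DD^{2\gamma'}_{X,\alpha}$ for some fixed $\gamma'\in(\sigma,\gamma)$, where the $T^{\varepsilon}$ factor is available, and only afterwards upgrade the unique fixed point to $\DD^{2\gamma}_{X,\alpha}$ by plugging it back into the equation and using \eqref{e:integration2}, \eqref{e:compos1} (this last step really is routine). A secondary difference: the paper centers the invariant ball not at a generic radius-$R$ ball but at the explicit pair $(\xi,\xi')$ with $\xi_t=S_{t,0}x+\int_0^tS_{t,r}F(x)\cdot d\X_r$, $\xi'=F(x)$, and radius $1$. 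The point is that $\mathcal M_T(y,y')-(\xi,\xi')$ involves $F(y_\cdot)-F(x)$, which vanishes at $t=0$, so Corollary~\ref{cor:continuous} yields a bound free of the large additive term $|F(x)|_{\alpha-\sigma}$. Your choice of a ball of radius $R=R(|x|_\alpha,\dots)$ would also work but is less clean.
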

	\begin{proof}
Assume first that $T\leq 1$ and let us define the map
		\begin{equation}\label{soln_map}
			\MM_T(y, y')_t :=\Big(S_{t,0}x+\int_0^tS_{t,s}N(y_s)ds+\int_0^tS_{t,s}F(y_s)\cdot d\X_s, F(y_t)\Big)\;.
		\end{equation}
		Instead of solving the equation directly in the space $\DD^{2\gamma}_{X,\alpha }$ we will show that the map $\MM_T$ is invariant and contractive inside a ball of a larger space. We now fix a parameter $\gamma '\in(\sigma ,\gamma )$
	and let $\varepsilon = \min\{\gamma-\gamma', \gamma'-\sigma\}$. We further define two continuous paths $\xi :[0,T]\to \BB_\alpha $ and $\xi ':[0,T]\to \BB_{\alpha -\gamma '}$ as
	\begin{equation*}
	\xi _t:= S_{t,0}x + \int_0^tS_{t,r}F(x)\cdot d\X_r\,,\qquad 
	\xi '_t:= F(x),\qquad t\in[0,T]\,,
	\end{equation*}
	and observe that $(\xi ,\xi ')\in\mathcal D_{X,\alpha }^{2\gamma '}$. This is indeed a consequence of 
	Corollary~\ref{cor:continuous} applied to the constant path $(F(x),0) \in \DD_{X,\alpha-\sigma}^{2\gamma 
	'}$, and of the fact that $(S_{t,0}x, 0)$ belongs to $\DD_{X,\alpha }^{2\gamma '}$ (using the smoothing 
	properties of the propagator). 

	The first step is to show the existence of a positive $T_*(\varepsilon, \gamma, |x|_\alpha 
	,\varrho_\gamma(\X)) \leq 1$ such that for every $T\in[0,T_*]$ the map $\MM_T$ leaves the ball $B_T(x)$ 
	invariant, where
	\begin{align*}
		B_T(x) = \Big\{(y,y') \in \DD^{2\gamma '}_{X,\alpha }([0,T])\,:\, (y_0,y_0') = \big(x,F(x)\big)\;
			\text{and\ }\|y-\xi, y' - \xi'\|_{\DD^{2\gamma'}_{X,\alpha }}  \leq 1\Big\}\,.
	\end{align*}
		
	Recall that from the definition of $\CC^3_{\alpha -2\gamma,-\sigma}$ it follows that $F_i$ together with its 
	derivatives sends bounded sets of $\BB_\beta$ to $\BB_{\beta-\sigma}$ for $\beta \geq \alpha-2\gamma$. 
	Therefore, without loss of generality one can view $F_i$ and its derivatives to be bounded since we restrict 
	ourselves to the ball $B_T(x)$. Similarly, one can assume that $N$ is globally Lipschitz.
		
		\bigskip
		
		\item[\em \indent Step 1: Stability of $B_T(x)$.]
		Let $(y,y') \in B_T(x)$ and let $(z ,z ') = \MM_T(y,y')$.
		For simplicity denote the drift term $\mathscr N_t:=\int_0^t S_{t,r} N(y_r)dr$ and define:
		\[
		(\zeta ,\zeta '):=(z -\xi -\mathscr N , z'-\xi ')\,.
		\]
		Note that $\zeta_0 = \zeta '_0 = 0$ and $
		\zeta _t= \int_0^{t}S_{t,r} (F(y_r)-F(x))\cdot d\X_r$.
		Furthermore, it is readily checked by definition of $\|\cdot \|_{\mathcal D_{X,\alpha }^{2\gamma '}}$ and the triangle inequality, that
		\begin{equation}
		\label{estim:D_z}
		\begin{aligned}
		\| z -\xi  ,z'-\xi '\|_{\mathcal D_{X,\alpha }^{2\gamma '}}
		&\leq \|\mathscr N,0\|_{\mathcal D_{X,\alpha }^{2\gamma '}} + \|\zeta ,\zeta '\|_{\mathcal D_{X,\alpha }^{2\gamma '}}
		\\
		&= |\mathscr N|_{0,\alpha} + |\mathscr N|_{\E^{\gamma',2\gamma'}_\alpha}+\|\zeta ,\zeta '\|_{\mathcal D_{X,\alpha }^{2\gamma '}}\;.
		\end{aligned}
		\end{equation} 
		
		\noindent In order to estimate the drift term, we note that
		\[
		\delta \mathscr N_{t,s}= (S_{t,s}-\id)\int_0^sS_{s,r}N(y_r)dr + \int_s^t S_{t,r}N(y_r)dr \,.
		\]
		The first term is easily estimated thanks to \eqref{smoothing_S}. We have indeed for $i=0,1,2$ using $N \in \Lip_{\alpha,-\delta}(\BB)$ and denoting by $\|N\|_1$ a Lipschitz constant for $N$ as a function $\BB_\alpha \to \BB_{\alpha-\delta}$
		\[\begin{aligned}
		\Big|(S_{t,s}-\id)\int_0^sS_{s,r}N(y_r)dr\Big|_{\alpha -i\gamma '}
		&\lesssim \|N\|_1 |t-s|^{i\gamma '}\int_0^s(s-r)^{-\delta }(N(0) + |y_r|_\alpha)dr
		\\
		&\lesssim_N |t-s|^{i\gamma '} T^{1-\delta }(1+ |y|_{0,\alpha })\,.
		\end{aligned}
		\]
		For the second term, we have similarly
		\begin{align*}
			\Big|\int_s^t S_{t,r} N(y_r)dr \Big|_{\alpha -i\gamma '} 
			&\lesssim \int_s^t |t-r|^{-\max\{0,\delta -i\gamma '\}}|N(y_r)|_{\alpha -\delta}dr
			\\ 
			&\lesssim_N |t-s|^{\min\{1, 1 +i\gamma '-\delta \}} (1+| y|_{0,\alpha})\,.
		\end{align*} 
		Note that thanks to our hypothesis that $1-\delta >0$ and $\gamma '<1/2$ it follows that $\kappa :=\min\{1-2\gamma ', 1 -\delta \}$ is positive and 
		\begin{equation}
		\label{drift_term}
		\max_{i=1,2,3} |\mathscr N|_{i\gamma', \alpha -i\gamma ' } 
		\lesssim_N T^\kappa (1+|y|_{0,\alpha}) \leq T^\kappa (2 + \|\xi, \xi'\|_{\mathcal D_{X,\alpha }^{2\gamma '}}) \,.
		\end{equation} 
		
		\noindent Next, note that Corollary \ref{cor:continuous} together with Lemma \ref{lem:composition}-\ref{comp:i} imply
		\begin{align}\label{estim:z}
		\|\zeta ,\zeta '\|_{\mathcal D_{X,\alpha }^{2\gamma '}} 
		&\lesssim \varrho_\gamma (\X)T^{\varepsilon} \|F(y_\cdot )-F(x), DF(y_\cdot )\circ y'_\cdot  \|_{\mathcal D_{X,\alpha -\sigma }^{2\gamma '}} \nonumber
		\\
		&\lesssim\|F\|_{\CC^2}\big(1+\varrho_\gamma (\X)\big)^{3} T^{\varepsilon} \|y_\cdot - x,y'\|_{\mathcal D_{X,\alpha }^{2\gamma '}}\nonumber \\ 
		&\leq \|F\|_{\CC^2}\big(1+\varrho_\gamma (\X)\big)^{3} T^{\varepsilon} \Big(1 + \|\xi - x, \xi'\|_{\mathcal D_{X,\alpha }^{2\gamma '}} \Big) \,.
		\end{align}

		Putting together \eqref{estim:z}, \eqref{estim:D_z} and \eqref{drift_term}, and noting that the bound on $\|\xi, \xi'\|_{\mathcal D_{X,\alpha }^{2\gamma '}}$ only depends on $F, x, \varrho_{\gamma}(\X)$ we see that there is some constant $C > 0$ which only depends on $N, F, x, \varrho_{\gamma}(\X)$ and the indices $\gamma, \gamma', \sigma, \delta$ such that:
		\begin{align*}
		\|z-\xi ,z'-\xi '\|_{\mathcal D_{X,\alpha }^{2\gamma '}} \leq C T^{\varepsilon \wedge \kappa }\,.
		\end{align*}
		Taking $T$ small enough, we see that $\MM_T(B_T(x))\subset B_T(x),$ which shows the claimed stability.\\

		\item[\em \indent Step 2: contraction property.]
Consider now $(y^j,y^{j\prime})\in B_T(x)$ and let $(z^j,z^{j\prime}):=\mathcal M_T(y^j,y^{j\prime}),$ $j=1,2.$
For every $(t,s)\in\Delta _2$, we have
\[
\begin{aligned}
z^1_t-z^2_t
&=\int_0^tS_{t,r}[N(y^1_r)-N(y^2_r)]dr + \int_0^tS_{t,r}[F(y^1_r)-F(y^2_r)]\cdot d\X_r
\\
&=\mathscr {\bar N}_t + \bar\zeta _t\,.
\end{aligned}
\]
Similarly, as above, using the Lipschitz property of $N$, we have for the drift term:
\[\begin{aligned}
|\delta \mathscr {\bar N}_{t,s}|_{\alpha -i\gamma '}
&\lesssim
\Big| (S_{t,s}-\id)\int_0^sS_{s,r}[N(y_r^1)-N(y_r^2)]dr + \int_s^t S_{t,r}[N(y^1_r)-N(y^2_r)]dr \Big|_{\alpha-i\gamma'}
\\
&\lesssim \|N\|_1 \Big(|t-s|^{i\gamma '}\int_0^s(s-r)^{-\delta }dr
+\int_s^t |t-r|^{-\max\{0,\delta -i\gamma '\}}dr\Big)\;|y^1-y^2|_{0,\alpha }
\\
&\lesssim_{N, \alpha ,|x|_\alpha, \X }T^{\kappa}\|y^1-y^2,y^{1\prime}-y^{2\prime}\|_{\mathcal D_{X,\alpha }^{2\gamma '}}\,.
\end{aligned}
\]
Next, applying again Corollary \ref{cor:continuous} and then using Lemma \ref{lem:composition}-\ref{comp:ii}, we get
		\begin{align*}
		\|\bar\zeta ,\bar\zeta '\|_{\mathcal D_{X,\alpha }^{2\gamma '}}
		&\lesssim \varrho_\gamma (\X)T^{\varepsilon} \|y^1-y^2, DF(y^1)\circ y^{\prime 1} -DF(y^2)\circ y^{\prime 2} \|_{\mathcal D_{X,\alpha -\sigma }^{2\gamma '}}
		\\
		&\lesssim\|F\|_{\CC^3}\big(1+\varrho_\gamma (\X)\big)^{3} T^{\varepsilon} \|y^1-y^1,y^{\prime 1}-y^{\prime 2}\|_{\mathcal D_{X,\alpha }^{2\gamma '}}\,.
		\end{align*}
This shows the claimed contraction property for $T\leq T_*$ small enough.\\

		Applying Banach Fixed point Theorem, we see that there exists a unique fixed point $(u,u')\in 
		\DD_{X,\alpha }^{2\gamma '}$ for $\mathcal M_T,$ and it is clearly a solution of \eqref{e:rpde3} with $u' = F(u)$. 
		Repeating the argument with $(T^1,x^1):=(T_*, u_{T_*})$ in place of $(0,x),$ we construct a sequence 
		$(T^n,x^n)$ and it is easily seen that $\tau :=\sup_{n\in\N}T^n< T$ implies 
		$\limsup_{n\to\infty}|x^n|_{\alpha }=\infty$. We then construct a solution on $[0,\tau)$ by `gluing 
		together' each solution on $[T^n,T^{n+1}].$ To show that this solution actually lives in the space 
		$\DD^{2\gamma}_{X,\alpha }$ it suffices to use the equation, together with the 
		bounds~\eqref{e:integration2} and~\eqref{e:compos1}.
		Details are left to the reader.
	\end{proof}
	\begin{rem}
		Note that the techniques of the proof do not restrict us to the range of the H\"older regularity of the 
		rough path to be $\gamma \in (1/3,1/2]$ and one could generalize the statement and the whole theory 
		above to the range $\gamma \in (1/n,1/2]$ for $n \geq 3$. However, this would require a change in the 
		definition of a controlled rough path in order to add a control of the  higher order (up to $n-2$) 
		iterated integrals of $X$ as explored in \cite{Ramification}. In turn, one would also need better smoothness properties on $F$ ($\CC^n$ 
		would do). We chose to avoid these considerations for computational convenience.
	\end{rem}

	\subsection{Continuity of the solution map and smoothing away from zero}
	Here we will briefly mention properties of the mild solution to the equation~\eqref{e:rpde2} that we 
	constructed above. We will not present any proofs here since they are simply technical modifications of 
	the analogous statements from \cite{gerasimovics2018} in the case of a constant differential operator $L$ 
	and a nonlinearity $F$ which sends $\BB_\alpha$ to itself (i.e there is no loss of space regularity from $F$).
	Our first result is going to be on the stability of the solution map~\eqref{soln_map} with respect to noise 
	and the initial condition.
	In the case of a Brownian rough path, the continuity of the solution map is one of the main advantages of 
	our pathwise formulation compared with It\^o calculus (where only measurability holds in general). We now 
	define a notion of distance between two controlled rough paths:
	
	\begin{defn}
		Let $\gamma \in (1/3,1/2]$, $0 < \sigma < \gamma' \leq \gamma$ and let $\X, \tilde{\X} \in \cC^\gamma(0,T;\R^d)$. For $(y,y') \in \DD^{2\gamma}_{X,\alpha}([0,T])$ and $(z,z') \in \DD^{2\gamma}_{\tilde X,\alpha}([0,T])$ define a distance $d_{2\gamma',2\gamma,\alpha}$ as follows
		\begin{multline*} 	
			d_{2\gamma',2\gamma, \alpha}(y,z) = |y-z|_{0,\alpha} + |y'-z'|_{0,\alpha-\gamma} 
			\\
			+| y'-z'|_{\gamma',\alpha-2\gamma} + [R^y-R^z]_{\gamma',\alpha-\gamma} + |R^y-R^z|_{2\gamma',\alpha-2\gamma},
		\end{multline*}
		where we make an abuse of notation by not writing the dependence of $d_{2\gamma',2\gamma, \alpha}(y,z)$ on $\X, \mathbf{\tilde X}$ and $y', z'$.
	\end{defn}

	\begin{thm}[Stability of the solution to subcritical RPDE] \label{stab_sol}
		Let $\gamma \in (1/3,1/2]$ and $\X,\tilde{\X} \in \cC^\gamma(0,T;\R^d)$. Let $\alpha \in \R$ and $x,\tilde{x} \in \BB_\alpha$. Assume that $F \in \CC^3_{\alpha-2\gamma,-\sigma}(\BB,\BB^d)$ for $0 \leq \sigma < \gamma$, and $N$ be as in Theorem \ref{RPDE}. Define $(u,F(u))\in\DD^{2\gamma}_{X, \alpha}([0,\tau_1))$ to be the solution to the RPDE:
		$$du_t = L_tu_tdt +N(u_t)dt + F(u_t)\cdot d\X_t\, ,\quad u_0 = x \in \BB\,;$$
		and $(z,F(z))\in \DD^{2\gamma}_{\tilde{X},\alpha}([0,\tau_2))$ to be a solution of the same RPDE but driven by the rough path $\tilde{\X}$ and initial condition $\tilde{x}$. Assume that  $\varrho_\gamma(\X), \varrho_\gamma(\tilde{\X}), |x|_{\alpha}, |\tilde{x}|_{\alpha} \leq M$. Then for every $\gamma'$ such that $0 < \sigma < \gamma' \leq \gamma$ there exists time $\tau< 1\wedge \tau_1 \wedge \tau_2$ such that for the following seminorm taken with respect to this time $\tau$ we have:
		\begin{equation} \label{e:cty}
			d_{2\gamma',2\gamma, \alpha}(u,z) \lesssim_M \varrho_\gamma(\X,\tilde{\X}) + | x-\tilde{x}|_\alpha. 
		\end{equation}
		Moreover, if both solutions do not blow up before time $T$ i.e. $(u,F(u))\in\DD^{2\gamma}_{X,\alpha}([0,T])$ and $(z,F(z))\in \DD^{2\gamma}_{\tilde{X},\alpha}([0,T])$, then \eqref{e:cty} holds on $[0,T]$.
	\end{thm}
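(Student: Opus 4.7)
The plan is to mimic the fixed-point contraction argument from Step 2 of the proof of Theorem~\ref{RPDE}, but now comparing two fixed points in a metric that accommodates the fact that $(u,F(u))$ and $(z,F(z))$ live in \emph{different} controlled path spaces $\DD^{2\gamma}_{X,\alpha}$ and $\DD^{2\gamma}_{\tilde X,\alpha}$. Writing $\MM^{\X}_T$ for the solution map \eqref{soln_map} with driver $\X$, the starting point is the telescopic decomposition
\begin{equation*}
	(u,F(u)) - (z,F(z))
	= \bigl[\MM^{\X}_T(u,F(u)) - \MM^{\X}_T(z,F(z))\bigr]
	+ \bigl[\MM^{\X}_T(z,F(z)) - \MM^{\tilde\X}_T(z,F(z))\bigr],
\end{equation*}
together with the initial-condition contribution $S_{t,0}(x-\tilde x)$. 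The first bracket should produce a factor $T^\varepsilon\, d_{2\gamma',2\gamma,\alpha}(u,z)$ by the same mechanism as Step~2 of Theorem~\ref{RPDE}, while the second, being driven by the same controlled path against two different rough paths, should be bounded by $\varrho_\gamma(\X,\tilde\X)$ up to a constant depending on $M$.

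The technical core consists in establishing two-path analogues of Corollary~\ref{cor:continuous} and Lemma~\ref{lem:composition} phrased in terms of $d_{2\gamma',2\gamma,\alpha}$. More precisely, I would first prove that for $(y,y')\in\DD^{2\gamma}_{X,\alpha}$ and $(\tilde y,\tilde y')\in\DD^{2\gamma}_{\tilde X,\alpha}$ bounded by some $K$, the composed paths $(F(y),DF(y)\circ y')$ and $(F(\tilde y),DF(\tilde y)\circ\tilde y')$ satisfy
\begin{equation*}
	d_{2\gamma',2\gamma,\alpha-\sigma}\bigl(F(y),F(\tilde y)\bigr)
	\lesssim_{K,F}\,\bigl(1+\varrho_\gamma(\X)+\varrho_\gamma(\tilde\X)\bigr)^2 \,\bigl[d_{2\gamma',2\gamma,\alpha}(y,\tilde y) + \varrho_\gamma(\X,\tilde\X)\bigr].
\end{equation*}
The proof follows Lemma~\ref{lem:composition}\,\ref{comp:ii} by Taylor expansion, the extra term $\varrho_\gamma(\X,\tilde\X)$ coming from estimating $R^y-R^{\tilde y}$, which involves $y'_s\cdot\dd X_{t,s}-\tilde y'_s\cdot\dd\tilde X_{t,s}$. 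Next, I would revisit the proof of Theorem~\ref{integration}/Corollary~\ref{cor:continuous} to obtain the analogous stability for rough convolutions, namely a bound on $d_{2\gamma',2\gamma,\alpha+\sigma}\bigl(\int_0^\cdot S_{\cdot,r}y_r\cdot d\X_r,\int_0^\cdot S_{\cdot,r}\tilde y_r\cdot d\tilde\X_r\bigr)$ of the same form with an extra factor $T^\varepsilon$. Since the affine sewing lemma is linear in $\xi$, this reduces to the additive sewing estimate applied to the difference $y_s\cdot\dd X_{t,s}+y'_s\!:\!\XX_{t,s}-\tilde y_s\cdot\dd\tilde X_{t,s}-\tilde y'_s\!:\!\tilde\XX_{t,s}$, expanded exactly as in the standard finite-dimensional theory (see \cite[Ch.~8]{friz}). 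The drift term $\int_0^tS_{t,r}(N(y_r)-N(\tilde y_r))\,dr$ is handled by the same computation as in Step~2 of Theorem~\ref{RPDE}, contributing a factor $T^\kappa$.

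Combining these two stability estimates on the bracketed decomposition, together with the contribution $|S_{\cdot,0}(x-\tilde x)|$ which is controlled by $|x-\tilde x|_\alpha$ via the smoothing property \eqref{P:smoothing}, yields on a small enough interval $[0,\tau]$ (with $\tau$ depending only on $M$, $F$, $N$, and the indices)
\begin{equation*}
	d_{2\gamma',2\gamma,\alpha}(u,z)
	\leq \tfrac12\, d_{2\gamma',2\gamma,\alpha}(u,z) + C_M\bigl(\varrho_\gamma(\X,\tilde\X) + |x-\tilde x|_\alpha\bigr),
\end{equation*}
from which~\eqref{e:cty} follows by absorbing the first term. The main obstacle is the bookkeeping required for the two-path versions of the composition and integration lemmas: one must carefully split each remainder into a part measuring the distance between $y$ and $\tilde y$ (controlled by $d_{2\gamma',2\gamma,\alpha}$) and a part measuring the distance between the noises (controlled by $\varrho_\gamma(\X,\tilde\X)$), while ensuring the mixed-regularity structure of $\E^{\gamma',2\gamma'}_\alpha$ is preserved throughout. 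Finally, for the global statement when neither solution blows up before $T$, I would iterate the local estimate on a finite partition $0=t_0<t_1<\cdots<t_K=T$ with $t_{k+1}-t_k\leq\tau(M')$ for some enlarged bound $M'$ on the solutions on $[0,T]$, propagating the constants multiplicatively across the intervals in standard fashion.
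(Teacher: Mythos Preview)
Your proposal is correct and follows essentially the same approach as the paper's sketch: establish two-path versions of the composition and integration continuity estimates (Lemma~\ref{lem:composition} and Corollary~\ref{cor:continuous}) in the metric $d_{2\gamma',2\gamma,\alpha}$, use these together with the fixed-point property of $u$ and $z$ to obtain an inequality of the form $d_{2\gamma',2\gamma,\alpha}(u,z)\lesssim \tau^\varepsilon\,d_{2\gamma',2\gamma,\alpha}(u,z)+\varrho_\gamma(\X,\tilde\X)+|x-\tilde x|_\alpha$, absorb the first term for $\tau$ small, and iterate for the global statement. The paper does not give a full proof either, referring to \cite{gerasimovics2018} for the details, and your telescopic decomposition and identification of the bookkeeping obstacles are precisely what that reference carries out.
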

	The proof relies on the continuity properties of the integration map and composition with the smooth 
	functions similar to Lemma~\ref{lem:composition} and Corollary~\ref{cor:continuous} modified for the 
	above metrics. The small sacrifice of time-regularity, which is represented by taking $\gamma' < \gamma$, 
	and the fact that $u$ is a fixed point of the solution map~\eqref{soln_map} then allows us to use 
	continuity properties of integration and composition to deduce for $\varepsilon = \min(\gamma-\gamma', 
	\gamma'-\sigma, 1 - \delta, 1 - 2\gamma')$:
	\[
	d_{2\gamma',2\gamma, \alpha}(u,v) \lesssim d_{2\gamma',2\gamma, \alpha}(u,v)\tau^{\varepsilon} +\varrho_\gamma(\X,\tilde{\X}) + | x-\tilde{x}|_\alpha\,.
	\]
	Therefore, taking $\tau$ small enough, we obtain the desired result. The global Lipschitz estimate on the whole $[0,T]$ in~\eqref{e:cty} can be obtained simply by iteration of the result.
	
	From classical PDE theory, it is expected that away from zero the solution is going to be infinitely smooth  
	in space because of the smoothing property of the propagator. The following proposition tells us that it is 
	indeed the case for our subcritical equations.
	
	\begin{prop} \label{smoothing}
		Let $\gamma \in (1/3,1/2]$, $0 \leq \sigma < \gamma$, and $\X \in \cC^\gamma(0,T;\R^d)$ . Let $\alpha \in \R$, $x \in \BB_\alpha$ and $F,\,N$ be as in Theorem~\ref{stab_sol} and $(u,F(u))\in\DD^{2\gamma}_{X, \alpha}([0,\tau))$ be the solution to the equation~(\ref{e:rpde2}). Denote $M_t := |u|_{0,\alpha,[0,t]}$ then for every $0<s<t<\tau$ and $\beta> 0$ we have that
		$(u,F(u)) \in \DD^{2\gamma}_{X,\alpha+\beta}([s,t])$ and there exist $\nu = \nu(\delta,\gamma,\sigma,\alpha)$, and a finite constant $C(M_t) = C(M_t,\tau,F,N,X)$ such that:
		\begin{align*} 
			|u|_{0,\alpha + \beta, [s,t]} \lesssim s^{-\beta}  |u|_{0,\alpha,[0,t]} + C(M_t)\, t^\nu\,. 
		\end{align*}
	\end{prop}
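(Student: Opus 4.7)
The plan is to establish Proposition~\ref{smoothing} by a bootstrap argument exploiting the smoothing property of the propagator~\eqref{P:smoothing} together with the regularity gain of the rough convolution in Corollary~\ref{cor:continuous}. First I would establish a \emph{restart identity}: for every $0<s'<t'<\tau$,
\begin{equation*}
u_{t'} = S_{t',s'}u_{s'} + \int_{s'}^{t'}S_{t',r}N(u_r)\,dr + \int_{s'}^{t'}S_{t',r}F(u_r)\cdot d\X_r\,,
\end{equation*}
obtained by applying $S_{t',s'}$ to~\eqref{e:rpde3} at time $s'$, using the semigroup property of $S$ together with the commutation of $S_{t',s'}$ with the rough convolution (itself a direct consequence of the Riemann-sum characterization~\eqref{e:integration1}).

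The core iterative step reads as follows. Set $\beta_0:=\tfrac12\min\{\gamma-\sigma,\,1-\delta\}>0$ and suppose $(u,F(u))\in\DD^{2\gamma}_{X,\alpha+\eta}([s',t])$ for some $\eta\geq 0$. For any $s''\in(s',t)$, I would show $(u,F(u))\in\DD^{2\gamma}_{X,\alpha+\eta+\beta_0}([s'',t])$ by estimating each of the three terms in the restart identity at regularity $\alpha+\eta+\beta_0$. For the propagator term, the smoothing bound $|S_{t',s'}u_{s'}|_{\alpha+\eta+\beta_0}\lesssim(s''-s')^{-\beta_0}|u_{s'}|_{\alpha+\eta}$ suffices (with a trivial zero Gubinelli derivative). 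The drift is controlled thanks to $\beta_0+\delta<1$, which makes the kernel $(t'-r)^{-\delta-\beta_0}$ integrable in $r$. For the rough convolution, Lemma~\ref{lem:composition} first gives $(F(u),DF(u)F(u))\in\DD^{2\gamma}_{X,\alpha+\eta-\sigma}([s',t])$, and Corollary~\ref{cor:continuous} (applied with $\gamma'=\gamma$ and spatial gain $\sigma+\beta_0<\gamma$) then lifts the resulting integral to $\DD^{2\gamma}_{X,\alpha+\eta+\beta_0}$. Collecting the three contributions yields a one-step inequality of the schematic form
\begin{equation*}
\|u,F(u)\|_{\DD^{2\gamma}_{X,\alpha+\eta+\beta_0}([s'',t])}\lesssim(s''-s')^{-\beta_0}\,\|u,F(u)\|_{\DD^{2\gamma}_{X,\alpha+\eta}([s',t])}+C(M_t)\,t^{\nu_0}\,,
\end{equation*}
for some $\nu_0>0$ depending only on the indices.

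To conclude, fix $\beta>0$, set $N:=\lceil \beta/\beta_0\rceil$, and pick times $0=s_0<s_1<\dots<s_N=s$ with uniform spacing $s/N$. Iterating the bootstrap step with $(s',s'',\eta)=(s_{k-1},s_k,(k-1)\beta_0)$ for $k=1,\dots,N$ produces $(u,F(u))\in\DD^{2\gamma}_{X,\alpha+N\beta_0}([s,t])\subset\DD^{2\gamma}_{X,\alpha+\beta}([s,t])$; tracking the compounded prefactors---the $s$-dependence coming entirely from the propagator-smoothing penalties and combining into an overall $s^{-\beta}$ factor---delivers the target estimate. The main obstacle will be precisely this bookkeeping: one must verify that at each iteration the drift and rough-integral contributions can be absorbed into the $C(M_t)\,t^{\nu_0}$ term without picking up further negative powers of $s$, and one must also ensure Corollary~\ref{cor:continuous} applies at each step (possibly after a further harmless subdivision when the working interval exceeds length~$1$).
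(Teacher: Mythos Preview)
The paper does not actually provide a proof of this proposition; immediately before stating Theorem~\ref{stab_sol} and Proposition~\ref{smoothing} it declares ``We will not present any proofs here since they are simply technical modifications of the analogous statements from \cite{gerasimovics2018}.'' Your bootstrap scheme---restart identity, propagator smoothing for the linear piece, drift estimate via integrability of $(t'-r)^{-\delta-\beta_0}$, and Corollary~\ref{cor:continuous} combined with Lemma~\ref{lem:composition} for the rough convolution---is precisely the standard argument used in that reference, adapted here to allow the loss $\sigma>0$ from $F$. So your plan is correct and aligns with the intended proof.

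One small correction to your ``main obstacle'' paragraph: the drift and rough-convolution contributions \emph{do} inherit negative powers of $s$ through the inductive hypothesis, since their bounds involve $\|u,F(u)\|_{\DD^{2\gamma}_{X,\alpha+\eta}([s',t])}$ rather than just $M_t$. What matters is that they introduce no \emph{additional} negative power beyond that inherited factor, so after $N$ iterations the compounded penalty is still $\prod_k(s_k-s_{k-1})^{-\beta_0}\sim s^{-N\beta_0}$; adjusting the final step to gain only $\beta-(N-1)\beta_0$ then yields the exact $s^{-\beta}$. The nonlinear dependence on $A_{k-1}$ coming from the $\|F\|_{\CC^2}$ constants in Lemma~\ref{lem:composition} is harmless because $N$ is fixed (depending only on $\beta$ and the indices), so everything can be absorbed into the unspecified constant $C(M_t)$.
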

	
	\begin{rem}
		In \cite{gerasimovics2018} in the case of $F_i \in \CC^3_{\alpha-2\gamma,0}$ and the operators $L$ 
		independent of time, the authors also show that solution to RPDE~\eqref{e:rpde1} when driven by the 
		Brownian rough path coincides almost surely with the mild solution to the corresponding SPDE where 
		the integration is given now in It\^o sense. The authors also use Lipschitz continuity of the solution 
		map Theorem~\ref{stab_sol} in order to show Malliavin differentiability of the solution to rough partial 
		differential equations driven by a (regular enough) Gaussian rough path. We believe that the analogous 
		results should hold true in the case of time dependent families $L_t$ and $F_i \in 
		\CC^3_{\alpha-2\gamma,-\sigma}$ with $\sigma < \gamma$.
	\end{rem}

\subsection{Weak solutions. Proofs of Theorems \ref{thm:weak} and \ref{thm:specify} }\label{sect:weak}
	
	In this subsection, we are going to show the equivalence between mild solutions and the weak solutions 
	that were already mentioned in Section \ref{main_results}. For notational convenience 
	we will now restrict ourselves to the case when the initial condition belongs to the space $\BB_0$. With 
	this at hand, we give a notion of the weak solution:
	\begin{defn}\label{def:weak}
		Let $(\BB_\beta)_{\beta \in \R}$ be a monotone family of interpolation spaces and let $x \in \BB_0$. Let $\X,L,N,F$ be as in Theorem \ref{RPDE} and let $\nu = \max\{1, \sigma+2\gamma\}$. We say that $(u,F(u)) \in \DD^{2\gamma}_{X}([0,T], \BB_0)$ is a weak solution of the rough PDE~\eqref{evolution_equation} if for all $\varphi \in \BB^*_{-\nu}$ the following integral formula holds for all $0 \leq t \leq T$:
		\begin{equation}\label{eq:weak}
	\langle u_t,\varphi\rangle = \langle u_0,\varphi\rangle + \int_0^t \langle L_su_s,\varphi\rangle ds + \int_0^t \langle N(u_s),\varphi\rangle ds + \int_0^t \langle F(u_s),\varphi\rangle\cdot d\X_s,
		\end{equation}
		where as in Theorem~\ref{integration}, $\langle F(u_s),\varphi\rangle$ denotes the vector $(\langle F_1(u_s),\varphi\rangle, \dots, \langle F_d(u_s),\varphi\rangle)$.
	\end{defn}
	Note that all the above integrals make sense. Since $-\nu \leq -1$ and because of the dense inclusions 
	$\BB_0 \subset \BB_{-\delta} \subset \BB_{-1}$ we also get the reverse inclusions $\BB^*_{-1} \subset 
	\BB^*_{-\delta} \subset \BB^*_0$ and thus all the terms $\langle u_0,\varphi\rangle, \langle L_su_s,\varphi\rangle, \langle N(u_s),\varphi\rangle$ are 
	well-defined. Moreover, denoting by $\mathscr{D}^{2\gamma}_X(0,T;V)$ the usual controlled rough path 
	spaces in the Banach space $V$ (in the sense of \cite[Definition 4.6]{friz}) we note that $\big(F(u), 
	DF(u)F(u)\big) \in \DD^{2\gamma}_{X}([0,T], \BB_{-\sigma})$ implies that $\big(F(u), DF(u)F(u)\big) \in 
	\mathscr{D}^{2\gamma}_X(0,T; \BB_{-\sigma-2\gamma})$. Using the fact that $\varphi \in \BB^*_{-\nu} 
	\subseteq \BB^*_{-\sigma-2\gamma}$ we can easily show that 
	$$\Big(\langle F(u),\varphi\rangle, \langle DF(u)F(u),\varphi\rangle\Big) \in \mathscr{D}^{2\gamma}_X(0,T;\R^d)\;,$$
	which implies that the rough integral in \eqref{eq:weak} is well-defined.
	
	Before showing the equivalence of mild and weak solutions, we need the following technical lemma:
	
	\begin{lem} \label{lem:weak}
		Let $\X \in \cC^{\gamma}(0,T;\R^d)$ for $\gamma \in (1/3,1/2]$ . Then for every $\varphi \in \CC(0,T; \BB^*_0)$ and $(y,y') \in \DD^{2\gamma}_{X}([0,T], \BB_0)$ we have for each $0\leq t\leq T$:
		\begin{align*} 
		\int_{0}^{t} \Big\langle\int_{0}^{s}S_{r,s}y_r\cdot d\X_r,\varphi_s \Big\rangle ds = \int_{0}^{t} \int_{r}^{t} \langle S_{r,s}y_r,\varphi_s\rangle ds \,\cdot d\X_r\;.
		\end{align*}
	\end{lem}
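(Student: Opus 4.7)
The plan is to realize both sides as limits of the same Riemann-type sums obtained from the definition of the rough convolution given by Theorem~\ref{integration}. As a preliminary, I would check that both sides make sense: the inner rough convolution $I_s:=\int_0^s S_{s,r}y_r\cdot d\X_r$ exists in $\BB_0$ and is continuous in $s$ by Theorem~\ref{integration} and Corollary~\ref{cor:continuous}, so the outer Lebesgue integral is well defined. For the right-hand side, I would introduce the $\R^d$-valued paths
\[
G_r := \int_r^t \langle S_{s,r}y_r,\varphi_s\rangle\,ds\,,\qquad G'_r := \int_r^t \langle S_{s,r}y'_r,\varphi_s\rangle\,ds\,,
\]
and verify that $(G,G')$ is a controlled rough path in $\R^d$ in the sense of \cite[Def.~4.6]{friz}, so that the integral $\int_0^t G_r\cdot d\X_r$ is defined in the usual finite-dimensional sense of \cite[Thm.~4.10]{friz}.

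Next, I would fix a partition $\pi=\{0=r_0<\dots<r_N=t\}$ of $[0,t]$ and, for each $s\in(r_j,r_{j+1}]$, approximate $I_s$ by the compensated Riemann sum corresponding to the partition $\{r_0,\dots,r_j,s\}$ of $[0,s]$. By \eqref{e:integration2} applied with $\beta$ chosen so that $3\gamma-\beta>1$, this approximation converges to $I_s$ uniformly in $s\in[0,t]$, hence I may interchange the limit $|\pi|\to 0$ with $\int_0^t\langle\cdot,\varphi_s\rangle ds$. Using linearity of $\langle\cdot,\varphi_s\rangle$ and classical Fubini for the finite sum and the Lebesgue integral, the left-hand side becomes
\[
\lim_{|\pi|\to 0}\sum_{i=0}^{N-1}\Big(\int_{r_{i+1}}^t\langle S_{s,r_i}y_{r_i},\varphi_s\rangle\,ds\cdot\delta X_{r_{i+1},r_i}+\int_{r_{i+1}}^t\langle S_{s,r_i}y'_{r_i},\varphi_s\rangle\,ds:\XX_{r_{i+1},r_i}\Big),
\]
up to a term arising from the last (partial) sub-interval in each approximation, which is $O(|\pi|^{\gamma})$ uniformly and hence negligible. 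Changing the lower limits of integration from $r_{i+1}$ to $r_i$ introduces an additional correction of size $\sum_i|r_{i+1}-r_i|^{1+\gamma}=O(|\pi|^\gamma)$, so the sum above is equal, modulo $o(1)$, to the Riemann sum $\sum_i (G_{r_i}\cdot\delta X_{r_{i+1},r_i}+G'_{r_i}:\XX_{r_{i+1},r_i})$.

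The main obstacle is verifying the controlled rough path structure of $(G,G')$, which is needed to identify the above limit with $\int_0^t G_r\cdot d\X_r$ via the classical sewing lemma. This boils down to controlling the remainder $R^G_{v,u}=\delta G_{v,u}-G'_u\cdot\delta X_{v,u}$ at order $2\gamma$. The natural decomposition is
\[
\delta G_{v,u}=\int_v^t\langle(S_{s,v}-S_{s,u})y_u,\varphi_s\rangle\,ds+\int_v^t\langle S_{s,v}(y_v-y_u),\varphi_s\rangle\,ds-\int_u^v\langle S_{s,u}y_u,\varphi_s\rangle\,ds,
\]
after which one uses the semigroup smoothing from \ref{P4*}, the identity $\frac{d}{ds}S_{s,u}x=L_sS_{s,u}x$ together with $\varphi_s\in\BB^*_0\subset\BB^*_{-\nu}$, and the controlled-path expansion $\delta y_{v,u}=y'_u\delta X_{v,u}+R^y_{v,u}$ with $R^y\in\E^{\gamma,2\gamma}_0$. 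The $G'_u\cdot\delta X_{v,u}$ term cancels against the leading contribution from the middle piece, and the remaining pieces are each of order $|v-u|^{2\gamma}$, which yields the required regularity. Once $(G,G')$ is shown to be a controlled rough path, the Riemann sum converges to $\int_0^t G_r\cdot d\X_r$, completing the identification of the two sides.
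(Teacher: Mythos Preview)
Your approach is essentially the standard one and is consistent with the paper's own treatment: the paper does not give a self-contained proof but simply refers to \cite[Lemma~6.2]{gerasimovics2018}, remarking that one replaces the Hilbert inner product and Cauchy--Schwarz by the duality pairing and boundedness of $\varphi$. Your sketch---approximating both sides by compensated Riemann sums, using finite-sum Fubini, and identifying the limit via the controlled structure of $G_r=\int_r^t\langle S_{s,r}y_r,\varphi_s\rangle\,ds$---is precisely how that argument goes, so there is nothing to correct at the level of strategy.

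Two minor points to tighten. First, when bounding the piece $\int_v^t\langle(S_{s,v}-S_{s,u})y_u,\varphi_s\rangle\,ds$ via $S_{s,v}(S_{v,u}-\id)y_u$ and \eqref{P:smoothing}, the integral $\int_v^t|s-v|^{-2\gamma}\,ds$ is borderline at $\gamma=\tfrac12$; you should instead trade a little and show $R^G\in\CC_2^{2\gamma-\epsilon}$ for some $\epsilon<3\gamma-1$, which is still enough for the finite-dimensional rough integral of \cite[Thm.~4.10]{friz} to converge. Second, your remark about choosing $\beta$ with $3\gamma-\beta>1$ in \eqref{e:integration2} is not quite what is needed: that inequality controls the single-step error in $\BB_{\alpha-2\gamma+\beta}$, whereas the pairing with $\varphi_s\in\BB_0^*$ requires control in $\BB_0$; the uniform-in-$s$ convergence of the Riemann sums comes rather from the maximal inequality in the sewing construction, which yields an error of order $|\pi|^{3\gamma-1}$.
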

	The proof can be carried out \textit{mutatis mutandis} as in \cite[Lemma 6.2]{gerasimovics2018}. The 
	only difference is that instead of the inner product of Hilbert space $\HH$ we use testing with the 
	functions from $\BB^*_0$, thus one should simply replace the Cauchy-Schwarz inequality with the 
	boundedness of the functional $\varphi$. With this at hand:
	
	\begin{thm} \label{weak}
		Let $(\BB_\beta)_{\beta \in \R}$ be a monotone family of interpolation spaces and let $u_0 \in \BB_0$. Let $\X,L,N,F$ be as in Theorem \ref{RPDE}. Assume that for all $\beta \in \R$ one has $L_\cdot \in \CC(0,T; \LL(\BB_{\beta},\BB_{\beta-1}))$, Then $(u,F(u)) \in \DD^{2\gamma}_{X}([0,T], \BB_0)$ is a mild solution of the rough PDE~\eqref{evolution_equation} namely it satisfies
		\begin{align*}
		u_t = S_{t,0}\,x + \int_0^tS_{t,r}N(u_r)dr + \int_0^tS_{t,r}F(u_r)\cdot d\X_r\;,
		\end{align*}
		if and only if it is a weak solution in the sense of Definition~\ref{def:weak}.
	\end{thm}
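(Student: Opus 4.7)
The plan is to prove each implication separately, using the classical (and stochastic) Fubini theorems — in particular Lemma~\ref{lem:weak} — together with the propagator identity $\int_r^t L_s S_{s,r}v\,ds = S_{t,r}v-v$, which follows by integrating property~\ref{P3*} of Theorem~\ref{Tanabe2}. Throughout, the standing assumption $L_\cdot \in \CC(0,T;\LL(\BB_\beta,\BB_{\beta-1}))$ ensures that every pairing $\langle L_s u_s,\varphi\rangle$ appearing below is well-defined and continuous in $s$ whenever $\varphi\in\BB^*_{-\nu}$, since $u_s \in \BB_0$ and so $L_s u_s \in \BB_{-1}\subset\BB_{-\nu}$.

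\textbf{Mild $\Rightarrow$ Weak.} Fix $\varphi\in\BB^*_{-\nu}$. Starting from \eqref{e:rpde1} and pairing with $\varphi$, I want to show that replacing $S_{t,0}u_0$, $S_{t,r}N(u_r)$, $S_{t,r}F(u_r)$ by their respective differences with $u_0$, $N(u_r)$, $F(u_r)$ produces exactly the terms $\int_0^t\langle L_s u_s,\varphi\rangle ds$. For the linear semigroup piece, the identity $S_{t,0}u_0-u_0=\int_0^t L_s S_{s,0}u_0\,ds$ (applied weakly against $\varphi$) handles the initial-datum contribution. For the drift term, I apply classical Fubini to obtain
\[
\int_0^t\!\int_r^t \langle L_s S_{s,r}N(u_r),\varphi\rangle\,ds\,dr
=\int_0^t \langle S_{t,r}N(u_r)-N(u_r),\varphi\rangle\,dr.
\]
For the rough integral term I use exactly Lemma~\ref{lem:weak} (with the time-dependent functional $\varphi_s := L_s^*\varphi$, whose required regularity in $s$ is guaranteed by the assumption on $L_\cdot$) to swap the double integral
\[
\int_0^t\!\Big\langle L_s \int_0^s S_{s,r}F(u_r)\cdot d\X_r,\varphi\Big\rangle ds
\;=\;\int_0^t\!\int_r^t\langle L_s S_{s,r}F(u_r),\varphi\rangle\,ds\,\cdot d\X_r,
\]
and then integrate the inner $s$-integral using the propagator identity again. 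Summing these three contributions and substituting the mild formula yields precisely \eqref{eq:weak}.

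\textbf{Weak $\Rightarrow$ Mild.} The idea is the duality trick $\varphi_s=S_{t,s}^*\psi$ for fixed $t$ and $\psi\in\BB^*_{-\nu}$. Since the weak formulation is only stated for $s$-independent test functions, I would argue by discretisation: take a partition $\pi=\{0=s_0<\cdots<s_K=t\}$ and test the weak formulation for $u$ on each subinterval $[s_k,s_{k+1}]$ against the \emph{constant} test function $\psi_k := S_{t,s_k}^*\psi$. Summing the telescoping sum
\[
\langle u_t,\psi\rangle-\langle S_{t,0}u_0,\psi\rangle
\;=\;\sum_{k=0}^{K-1}\bigl(\langle u_{s_{k+1}},\psi_{k+1}\rangle-\langle u_{s_k},\psi_k\rangle\bigr)
\]
and splitting $\langle u_{s_{k+1}},\psi_{k+1}\rangle-\langle u_{s_k},\psi_k\rangle=(\langle u_{s_{k+1}},\psi_{k+1}\rangle-\langle u_{s_{k+1}},\psi_k\rangle)+(\langle u_{s_{k+1}}-u_{s_k},\psi_k\rangle)$, each piece can be rewritten: the first using $\psi_{k+1}-\psi_k = -\int_{s_k}^{s_{k+1}}L_r^*S_{t,r}^*\psi\,dr$ (via \ref{P3*}), the second using the weak formulation restricted to $[s_k,s_{k+1}]$. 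The term involving $\langle L_r u_r,\psi_k\rangle$ then cancels with the first kind of contribution (up to a term that vanishes as $|\pi|\to 0$ by the $s$-continuity ensured by the assumption on $L_\cdot$), leaving in the limit exactly the mild-formula right-hand side. Passing to the limit in the rough integral $\int_0^t\langle F(u_s),\psi_k\rangle\cdot d\X_s$ uses the continuity of the rough integral with respect to the controlled-path structure on $\big(\langle F(u),\psi_s\rangle,\langle DF(u)F(u),\psi_s\rangle\big)$, i.e.\ Lemma~\ref{lem:weak} again.

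\textbf{Main obstacle.} The delicate point is the second direction: justifying the use of the time-dependent test function $s\mapsto S_{t,s}^*\psi$, which is not directly covered by Definition~\ref{def:weak}. The approximation scheme above (or equivalently a direct Fubini identity via Lemma~\ref{lem:weak}) must be implemented carefully so that the rough integral $\int_0^t\langle F(u_s),S_{t,s}^*\psi\rangle\cdot d\X_s$ is well-defined as a controlled integral — this is where having $\psi\in\BB^*_{-\nu}$ with $\nu\geq \sigma+2\gamma$ is crucial, so that both $\langle F(u),S_{t,\cdot}^*\psi\rangle$ and its Gubinelli derivative $\langle DF(u)F(u),S_{t,\cdot}^*\psi\rangle$ have the necessary time regularity after composition with the $(s,t)$-dependent weight $S_{t,s}^*$.
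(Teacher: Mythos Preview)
Your proposal is correct and matches the paper's approach closely. For \textbf{Mild $\Rightarrow$ Weak} you do exactly what the paper does: test with $L_s^*\varphi$, integrate in $s$, invoke Lemma~\ref{lem:weak} to exchange the $ds$ and $d\X_r$ integrals, and use the propagator identity $\int_r^t L_s S_{s,r}\,ds = S_{t,r}-\id$ coming from~\ref{P3*}. The paper also reduces first to $x=0$ and $N=0$, which you treat explicitly instead; this is only a cosmetic difference.

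For \textbf{Weak $\Rightarrow$ Mild} the paper does not give details either: it simply points to the standard SPDE argument in \cite{DPZ,SPDE} and to the computation in Lemma~\ref{lem:transportMildFormulation}. Your discretisation scheme with $\psi_k=S_{t,s_k}^*\psi$ is a perfectly valid way to implement that standard argument, and is essentially a Riemann-sum version of the continuous computation carried out in Lemma~\ref{lem:transportMildFormulation} (there the paper sets $f_s=S_{t,s}\phi$ directly and controls the remainder $u(f)^\sharp_{t,s}$, exploiting self-adjointness; in your general setting one works with adjoints $S_{t,s}^*$ instead, as you do). Your identification of the ``main obstacle''---making sense of the rough integral against the $s$-dependent test function $S_{t,s}^*\psi$ and using $\psi\in\BB_{-\nu}^*$ with $\nu\ge\sigma+2\gamma$---is exactly the point the paper is alluding to when it says the additional difficulties are handled as in Lemma~\ref{lem:transportMildFormulation}.
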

	\begin{proof}
		Without loss of generality we can assume in both cases that $x = 0$ by replacing $(u_t,u'_t)$ by $(u_t+S_{t,0}x,u'_t)$ (using $\ddh S_{\cdot,0} \,x = 0$).
		
		\noindent\textbf{Mild $\Rightarrow$ Weak}. Assume also for simplicity that $N = 0$ since dealing with the drift term is easier than with 
		the diffusion term $F$. Now let $(u,F(u)) \in \DD^{2\gamma}_{X}([0,T], \BB_0)$ satisfy for $0\leq t \leq T$ 
		$$u_t = \int_0^t S_{t,s}F(u_s)\cdot d\X_s\,.$$
		Let $\varphi \in \BB^*_{-\nu}$ be arbitrary for $\nu = \max\{1, \sigma+2\gamma\}$. Note that since $\BB_{-\nu}$ is continuously embedded in $\BB_{-1}$ then $L_\cdot \in \CC(0,T; \LL(\BB_0,\BB_{-\nu}))$ implying $L^*_\cdot\varphi \in \CC(0,T; \BB^*_0)$. Thus, testing with $L_s^*\varphi $ and integrating from $0$ to $t$ gives
		\begin{equation*}
		\begin{aligned}
		\int_0^t \langle L_su_s ,\varphi\rangle ds 
		&= \int_0^t \big\langle L_s \int_0^s S_{s,r}F(u_r)\cdot d\X_r,\varphi\big\rangle ds  \\
		& = \int_0^t  \int_r^t \langle L_s S_{s,r}F(u_r),\varphi\rangle ds \,\cdot  d\X_r \\
		& = \int_0^t \Big\langle \int_r^t L_s S_{s,r}F(u_r) ds ,\varphi\Big\rangle  \cdot d\X_r \\
		& = \int_0^t \Big\langle  \int_r^t  \frac{d}{ds}  S_{s,r}F(u_r) ds ,\varphi \Big\rangle\cdot  d\X_r \\
		&= \int_{0}^{t} \big\langle S_{t,r} F(u_r),\varphi\big\rangle d\X_r - \int_{0}^{t} \big\langle F(u_r),\varphi\big\rangle\cdot  d\X_r \\
		&= \langle u_t,\varphi\rangle  - \int_{0}^{t} \big\langle F(u_r),\varphi\big\rangle\cdot  d\X_r  \;,
		\end{aligned}
		\end{equation*}
		where we used Lemma~\ref{lem:weak} in the second equality together with the fact that $L_s^*\varphi$ is a continuous function in time with values in $\BB^*_0$, and the fourth equality is justified by \ref{P3*}.
		
		\noindent\textbf{Weak $\Rightarrow$ Mild}. The proof is similar to the standard proof for SPDEs found either in \cite{DPZ} or \cite{SPDE}. All the additional difficulties are similar to the ones in the proof of Lemma \ref{lem:transportMildFormulation} in the case of supercritical noise. 
	\end{proof}
	
	We now come back to the applications of this Theorem to the stochastic setting and address the proof of Theorem~\ref{thm:specify}.
	
	\begin{proof}[Proof of existence in Theorem~\ref{thm:specify}.]
		First, note that the multidimensional Brownian motion can be lifted almost surely to an $\omega $-dependent rough path $(B(\omega ),\B(\omega ))$ by setting for $(t,s)\in\Delta _2$:
		\[
		\B^{i,j}_{t,s}:=\int_s^t(B^i_r-B^i_{s})dB^j_r\,,
		\]
		in the sense of It\^o integration. 
		The corresponding random variable is supported in the space $\mathscr C^\gamma (0,T;\R^d)$ for any 
		$\gamma <\frac{1}{2}$.
		For details, we refer to \cite[Section 10]{friz}.
		
		Fix $\gamma \in (1/3,1/2)$ such that $1/3 < \gamma < \frac14(k-n/p)$, which is possible by assumption $k > n/p + 4/3$. With all the above requirements satisfied, we see that pathwise, the equation \eqref{react_diff} falls into 
		the framework of Theorem~\ref{RPDE}.
		Indeed, apply Theorem~\ref{RPDE} $\mathbb{P}$-almost surely, with the family $(L_t(\omega 
		))_{t\in[0,T]}$ defined as $L_t(\omega):=\nabla \cdot (a_t(\omega,\cdot)\nabla (\cdot ))$ to solve 
		equation~\eqref{react_diff} on $\DD^{2\gamma}_{B,0}$ for $\BB_0= H^{k,p}({\T^n})$. Note that we have chosen $\gamma$ so that $k > n/p + 4\gamma$. Therefore, the spaces $\BB_{\alpha} = H^{k+\alpha,p}({\T^n})$ are Banach algebras for every $\alpha > -2\gamma$. It is now easy to see that $f \in \CC^\infty_{0,0}(\BB)$ and $p_i \in \CC^\infty_{-2\gamma,0}(\BB)$ (see Section~\ref{subsec:specified}).
		We conclude that there exists a unique local solution $u(\omega)$ to \eqref{stochastic_reaction} with 
		a potential blow up time $\tau(\omega)$, such that $u \in \CC([0,\tau(\omega)),H^{k,p}({\T^n}))$.
		
		Now one can easily see that for $\varphi \in \CC^\infty({\T^n})$, the functional $\<\cdot, 
		\varphi\>_{L^2({\T^n})}$ belongs to $\BB^*_{-1}$ (here we need $\BB^*_{-1}$ since in the case of reaction 
		diffusion equations $\sigma = 0$ and therefore $\nu = \max\{1, \sigma+2\gamma\} = 1$). This implies 
		that $u$ satisfies almost surely on $\{t < \tau\}$ the integral formula:
		\begin{align*}
			\big\< u_t, \varphi \big\>_{L^2({\T^n})} = \big\< u_0, &\varphi \big\>_{L^2({\T^n})} + \int_0^t \big\< \nabla \cdot (a_t(\cdot)\nabla u_s), \varphi \big\>_{L^2({\T^n})} ds \\ 
			&+ \int_0^t \big\< f(u_s), \varphi \big\>_{L^2({\T^n})} ds + \sum_{i = 1}^d \int_0^t \big\< p_i(u_s), \varphi \big\>_{L^2({\T^n})} \cdot d\textbf{B}^i_s\,.
		\end{align*}
		From \cite[Section 9]{friz} we see that the above rough integrals coincide with the usual It\^o integral 
		almost surely since the processes $\<p_i(u_s), \varphi\>_{L^2({\T^n})}$ are adapted. This implies that the constructed solution is indeed a weak It\^o solution.
	\end{proof}

		\begin{proof}[Proof of uniqueness in Theorem~\ref{thm:specify}.] In this proof we assume that $f=0$ since working with the diffusion term only adds notational difficulty. 
			For notational simplicity, in the sequel we write $\CC(\BB_0)$ instead of $\CC([0,T],\BB_0)$, and similarly for other spaces. 
			The proof requires a localization argument: fix an arbitrary $\varrho>0$, define the stopping time
			$\tau_\varrho:=\inf\{t\in(0,\tau):|u_t|_{0}\geq \varrho\}$ and denote by $u^\varrho$ the stopped process $u^\varrho_t:=u_{t\wedge\tau_\varrho}$, $t\in[0,T]$.
			
			We will prove that for any\footnote{In fact it is enough to show this for some $\gamma \in (1/3,1/2)$, but we will prove a slightly stronger statement.} $\gamma \in (1/3,1/2)$ and every $\gamma > \varepsilon > 0$, the remainder\pagebreak
			\begin{equation*}
			\begin{aligned}
			R^{u^\varrho}_{t,s}(x) 
			&= u_t^\varrho(x) - u_s^\varrho(x) - \sum_{i = 1}^d p_i(u_s^\varrho(x))\delta B^i_{t\wedge \tau_\varrho,s\wedge \tau_\varrho}
			\\
			&=
			\int_{s\wedge \tau_\varrho}^{t\wedge\tau_\varrho}L_ru_r dr +\sum_{i=1}^d\int_{s\wedge \tau_\varrho}^{t\wedge\tau_\varrho}(p_i(u^\varrho_r)-p_i(u^\varrho_s))dB^i_r
			\,,
			\end{aligned}
			\end{equation*}
			belongs to $\CC^{2\gamma-\varepsilon}_2(\BB_{-2\gamma}) \cap \CC^{\gamma-\varepsilon}_2(\BB_{-\gamma})$, $\mathbb P$-almost surely. The uniqueness then follows from a slight modification of Theorem~\ref{thm:weak}. Indeed, since we have that the Brownian rough path $(B,\B)$ lives in $\cC^{\frac{1}{2} - \delta}$ for every $\delta>0$, $\mathbb{P}$-almost surely, then one can use some extra regularity of the Brownian rough path to show that the solution map~\eqref{soln_map} for the equation~\eqref{stochastic_reaction} has a unique fixed point in the space
			\begin{equation}\label{ex:twisted_space}
				\big\{(v,v',R^v) \in \CC(\BB_0) \times \CC^{\gamma-\varepsilon}(\BB_0) \times \big(\CC^{2\gamma-\varepsilon}_2(\BB_{-2\gamma}) \cap \CC^{\gamma-\varepsilon}_2(\BB_{-\gamma})\big)\big\}\;.
			\end{equation}
			
			The rest of the proof is split in two parts. First, we show that for every $\gamma \in (1/3,1/2)$ and $\varepsilon \in (0,\gamma)$, $u^\varrho$ is $(\gamma-\varepsilon)$-H\"older with values in $\BB_{-\gamma}$ with full probability. Then, we use this information to deduce that the two-parameter quantity $R^{u^\varrho}$ defined above lies in $\CC_2^{2\gamma-\varepsilon}(\BB_{-2\gamma})$. This, together with the assumption that the week solution $u$ belongs to $\CC(\BB_0)$, easily implies that $(u^\varrho, p(u^\varrho), R^{u^\varrho})$ belongs to the space defined in \eqref{ex:twisted_space} and therefore must be unique solution on the interval $[0,\tau_\varrho]$. Therefore, since such uniqueness is true for every $\varrho>0$ and since $u^\varrho = u$ on $[0,\tau_\varrho]$ one must have that $u$ is a unique weak It\^o solution to~\eqref{stochastic_reaction} on $[0,\tau)$.\smallskip
			\item[\indent\textit{Step 1: proof that $\mathbb P\left (u^\varrho\in \CC^{\gamma-\varepsilon}(\BB_{-\gamma})\right )=1$}.]
			Recall the notations of Theorem \ref{thm:weak}.
			Since here $\sigma=0$, we have that $\nu=\max(1,\sigma+2\gamma)=1$.
			Let $M_t:=\sum_{i=1}^d \int_0^{t\wedge\tau_\varrho}p_i(u_s)dB^i_s$, we will first show that for every $\gamma \in [0,1/2)$, $M\in \CC^\gamma(\BB_0)$, $\mathbb P$-a.s. 
			Observe by a standard density argument that \eqref{weak_Cc} holds for any $\varphi\in \BB_{-1}^*\simeq H^{-k+2,q}({\T^n})$, where here $q$ is the conjugate exponent i.e.\ $p^{-1} + q^{-1} = 1$. Fix $\varphi\in \BB_{-1}^*\subset \BB^*_0$, take $(t,s)\in\Delta_2$ and use Burkholder–Davis–Gundy inequality to obtain
			\begin{equation}
			\mathbb E|\langle\delta M_{t,s},\varphi\rangle|^m
			\leq C(p,\varrho)^m |t-s|^{m/2}|\varphi|_{\BB_{0}^*}^m\,,
			\end{equation}
			where $C$ is a constant depending on the polynomials $p_i$ and $\varrho$.
			Kolmogorov's continuity criterion implies that for any $\gamma\in [0,\frac12-\frac1m)$, there is an event $\Omega^\varphi$ of full probability such that $\langle M_\cdot(\omega),\varphi\rangle$ belongs to $\CC^{\gamma}(\R)$, for any $\omega\in \Omega^\varphi$.
			Let $(\varphi_n)_{n\in\N}$ be a sequence of elements in $\BB_{-1}^*$ such that $\{\varphi_n\}_{n\in\N}$ is dense in $\BB_{0}^*$, define $\Omega_\infty:=\cap _{n\in\N}\Omega^{\varphi_n}$ and observe that $\mathbb P(\Omega_{\infty})=1.$
			By the previous discussion, we further remark that for each $n\in\N,$ $(t,s)\in\Delta_2,$ and $\omega\in \Omega$:
			\begin{equation}
			\label{Kolmog_each_phi}
			\mathbf1_{\Omega_{\infty}}(\omega)|t-s|^{-\gamma}|\langle \delta M_{t,s}(\omega),\varphi_n\rangle| \leq \tilde C_{\varrho}(\omega)|\varphi_n|_{\BB_{0}^*}\;,
			\end{equation}
			where $\tilde C_{\varrho}$ is a random constant with $\mathbb{E} \tilde C_{\varrho}^m \lesssim_m C(p,\varrho)^m$.
			Since both sides of \eqref{Kolmog_each_phi} are continuous with respect to the $\BB_{0}^*$-norm, we conclude that the same relation holds true for each $\varphi\in \BB_{0}^*$. Taking the supremum with respect to $\varphi\in\BB_{0}^*$ and $(t,s)\in\Delta_2$ shows the desired conclusion. Taking $m$ big enough, we can guarantee in particular that $\gamma$ can be taken arbitrary on the interval $(1/3,1/2)$. 
			
			Now set $g_t:=L_tM_t = L_t \sum_{i=1}^d \int_0^{t\wedge\tau_\varrho}p_i(u_s)dB^i_s$. By the above computation and since $L_{\cdot} \in \CC(0,T; \LL(\BB_0,\BB_{-1}))$ we have $g \in \CC(\BB_{-1})$, $\mathbb P$-almost surely. To be more precise note the following estimate for every $h \in \BB_0 = H^{k,p}({\T^n})$
			\begin{equation}\label{eq:random_L}
				|L_{t} h|_{-1}  = |(1-\Delta)^{\frac{k}{2} - 1}\nabla\cdot (a_{t} \nabla h)|_{L^{p}({\T^n})} \lesssim |a_{t}|_{\CC^{k-1}({\T^n})} |h|_{H^{k,p}({\T^n})} \leq A |h|_{0}\;,
			\end{equation}
		 where we use notation $A = \sup_{t \in [0,T]} |a_t|_{\CC^{k-1}({\T^n})}$. Therefore, $|g|_{0,-1} \leq A |M|_{0,0}$. Next, it follows that $\mathbb P$-almost surely, $u^\varrho_t-M_t=v_{t\wedge \tau_\varrho}$ where $v$ is defined as a weak (hence a mild) solution of the following parabolic equation with random coefficients
			\begin{equation}\label{classical_parabolic}
			\begin{aligned}
			&\partial_t v_t - L_tv_t= g_t\quad \text{on}\enskip[0,T]
			\\
			&v_0=u_0 \in \BB_0\,.
			\end{aligned}
			\end{equation}
			Now, for each $t\in[0,T]$ we have
			\begin{align*}
			\delta v_{t,s} = (S_{t,s} - \id)S_{s,0}u_0 + \int_s^t S_{t,r} g_r dr + (S_{t,s} - \id) \int_0^s S_{s,r} g_r dr\;.
			\end{align*}
			It is easy to see that the first two terms can be bounded by $(|u_0|_0 + |g|_{0,-1}) |t-s|^{\gamma}$ in $\BB_{-\gamma}$. For the third term, for any $0 < \varepsilon < \gamma$ we have
			\begin{align*}
			|(S_{t,s} - \id)\int_0^s S_{s,r} g_r dr|_{-\gamma} &\lesssim |t-s|^{\gamma-\varepsilon} \int_0^s |S_{s,r} g_r|_{-\varepsilon} dr\\ 
			&\lesssim |t-s|^{\gamma-\varepsilon} \int_0^s |s-r|^{-1+\varepsilon} |g_r|_{-1} dr \lesssim |t-s|^{\gamma-\varepsilon} T^\varepsilon |g|_{0,-1}\;,
			\end{align*}
			where we used that $s \leq T$.
			Therefore, $v \in \CC^{\gamma-\varepsilon}(\BB_{-\gamma})$ and thus $u^{\varrho} = v_{\cdot \wedge \tau_\varrho} + M \in \CC^{\gamma-\varepsilon}(\BB_{-\gamma})$ with probability one. Moreover, using~\eqref{eq:random_L} we obtain
			\begin{equation}\label{eq:u_estimate}
				[u^\varrho]_{\gamma-\varepsilon,-\gamma} \lesssim_{T} |u_0|_0 + |g|_{0,-1} + [M]_{\gamma,-\gamma} \lesssim |u_0|_0 + (A+1)\tilde C_\varrho\;.
			\end{equation}

			\item[\indent\textit{Step 2: proof that $\mathbb P\big(R^{u^\varrho}\in \CC_2^{2\gamma-\varepsilon}(\BB_{-2\gamma})\big)=1$}.] 
			First, note two elementary bounds
			\begin{align*}
				\big|\int_{s\wedge\tau_\varrho}^{t\wedge\tau_\varrho}L_ru_r\big|_{-1} &\lesssim |t-s|A|u^\varrho|_{0,0}\;,\\  
				\big|\int_{s\wedge\tau_\varrho}^{t\wedge\tau_\varrho}L_ru_r\big|_{0}\;\; &=|\delta u^\varrho_{t,s}-\delta M_{t,s}|_{0} \lesssim |u^\varrho|_{0,0}+|M|_{0,0}\,.
			\end{align*}
			The interpolation inequality~\eqref{interpolation} implies then for any $\theta\in[0,1]$ and every $(t,s)\in \Delta_2$
			\begin{equation}\label{interp_L_u}
				\big|\int_{s\wedge\tau_\varrho}^{t\wedge\tau_\varrho}L_ru_r\big|_{-\theta}\leq C_\varrho |t-s|^{\theta} A^\theta(1 + |M|_{0,0})^{1-\theta}\,,
			\end{equation}
			with potentially different constant $C_\varrho$. 
			Using the equation for $u$ and applying~\eqref{interp_L_u} with $\theta=2\gamma$ yields for any $\varphi\in \BB_{-1}^*\subset\BB_{-2\gamma}^*$
			\begin{align*}
			\mathbb E|\langle R^{u^\varrho}_{t,s},\varphi\rangle|^m
			&\lesssim
			\mathbb E\big|\int_{s\wedge\tau_\varrho}^{t\wedge\tau_\varrho} \langle L_ru_r,\varphi\rangle dr\big|^m
			+\sum_{i=1}^d \mathbb E\big|\int_{s\wedge\tau_\varrho}^{t\wedge\tau_\varrho} \langle \delta p_i(u^\varrho)_{r,s},\varphi\rangle dB^i_r\big|^{m}
			\\
			&\lesssim_{\varrho,m,p}
			|\varphi|^m_{\BB_{-2\gamma}^*}\big[|t-s|^{2\gamma m}\mathbb E A^{2\gamma m} (1 + |M|_{0,0})^{(1-2\gamma)m}
			\\
			& \qquad\qquad\qquad+ |t-s|^{(\gamma-\varepsilon+\frac12)m}\mathbb{E}[u^\varrho]^m_{\gamma-\varepsilon,-\gamma}\big]
			\\
			&\leq |\varphi|^m_{\BB_{-2\gamma}^*} C\left(\varrho,K_m,p,T\right )|t-s|^{\big( (\gamma-\varepsilon+\frac12)\wedge 2\gamma \big)m}\,,
			\end{align*}
			where we used Burkholder–Davis–Gundy in the second inequality, the moments estimate~\eqref{eq:moments} on $A$ together with moments bounds on $|M|_{0,0}$ and the estimate~\eqref{eq:u_estimate} on $[u^\varrho]_{\gamma-\varepsilon,-\gamma}$ in the last inequality.
			
			Using a two-parameter version of Kolmogorov's criterion (see \cite{FHL20+}),
			we obtain by a similar argument as in the previous step, that
			$R^{u^\varrho}$ belongs to $\CC_2^{\beta}(\BB_{-2\gamma})$ with probability one, for any $\beta < (\gamma-\varepsilon+\frac12)\wedge 2\gamma - \frac1m$. Taking $m$ large enough guarantees $R^{u^\varrho} \in \CC_2^{2\gamma - \varepsilon}(\BB_{-2\gamma})$.
			This concludes our second step, thus finishing the proof of uniqueness.
		\end{proof}

	\section{Further examples}
	\label{sec:examples}
	
	In this section, we present two equations that can be covered by Theorem~\ref{thm:main}, and we quickly 
	explain why the required assumptions are satisfied. For simplicity only, the examples given here are 
	autonomous, namely $L_t$ is constant in time (non-autonomous versions of the following are easily seen 
	to be covered as well, we refer for instance to section \ref{subsec:specified}).
	
	For completeness, and because it was quickly discussed in the introduction, 
	we will also provide an example of an equation where the diffusion term is not subcritical, and thus our 
	framework does not apply (though existence and uniqueness were shown in previous papers, based on a 
	priori estimates).
	Nevertheless, we will show that the variational solution satisfies also a suitable mild formulation, which 
	could be useful to prove regularity results.	

We recall that, given a filtered probability space $(\Omega ,\mathcal F,\mathbb{P},(\mathcal F_t)_{t\in[0,T]})$, a fractional Brownian motion (fBM) with Hurst parameter $H \in (1/3, 1/2]$ is defined as a Gaussian process $B^H$ with covariance such that
\[
\mathbb{E}[B^H_tB^H_s] = \frac{1}{2}\big(s^{2H}+t^{2H} - |t-s|^{2H}\big)\;,
\]
which covers the standard Brownian motion if one lets $H = 1/2$.

	\subsection{Stochastic Navier-Stokes equation}
	The two-dimensional Navier-Stokes equation describes the time evolution of the velocity $u_t(x)$ of an 
	incompressible fluid on a surface (here represented by the flat torus $\T^2$ for simplicity). A 
	perturbation of the latter by a singular, finite-dimensional noise term can be written, for parameters 
	$1/3<H$ and $\sigma <H$, as
	\begin{align}\label{NS}
		du_t(x) = \Delta u_t(x) dt + \mathscr B\left(\mathcal{K}u_t(x), u_t(x)\right)dt + f(x) (-\Delta)^\sigma u_t(x) dB^H_t\;,\\
		u_0 \in L^p(\T^2,\R^2)\quad \text{with} \quad \int_{\T^2}u_0(x)dx=0\,.\nonumber
	\end{align}
	where $f$ is smooth and bounded,
	while $B^H_t$ denotes a fractional Brownian motion enhanced to a rough path $\mathbf B^H:=( 
	B^H(\omega ),\mathbb{B}^H(\omega ))\in \mathscr \CC^\gamma (0,T;\R^d)$ for every $\gamma <H$ (see 
	Theorem \ref{thm:specify}).
	The bilinear operator $\mathscr B$ is defined formally as $\mathscr B(u,w) = - (u \cdot \nabla)w$,
	while $\mathcal{K}$ is the continuous linear mapping defined in the Fourier space as
	$$\mathcal{F}(\mathcal{K}w)(k) = \frac{-\mathcal{F}(w)(k_2,-k_1)}{k^2_1+k^2_2}\,, \quad 
	k\equiv(k_1,k_2)\in\Z^2\,,$$
	$\mathcal{F}$ being the discrete Fourier transform.
	Fix $p\in(1,\infty)$ and introduce the scale of Banach spaces $\BB_\beta = H^{2\beta,p}_0(\T^2, \R^2)$ 
	which consists of the Bessel potential spaces on $\T^2$ intersected with the space of distributions $u$ such that 
	$\mathcal{F}(u)(0)=0$. It is well known (see~\cite{pazy1983semigroups}) that the Laplacian $\Delta$ generates a semigroup $(S_t)_{t \in [0,\infty)}$ on the full range $(\BB_\beta)_{\beta \in \R}$ in the sense of Definition~\ref{def:part}.
	Moreover, the non-linearity $N(u) = \mathscr B(\mathcal K u, u)$ lies in $\Lip_{0,-\delta}(\BB)$ and any $1 > \delta  > 1/2$ (we refer for instance to Section 8 in \cite{hypoelipticity} for details).
	 
	The operator $(-\Delta)^\sigma$ sends $\BB_\beta$ to $\BB_{\beta-\sigma}$ for all $\beta \in \R$, while multiplication with the smooth function $f$ is a smooth operation from $\BB_{\beta-\sigma}$ to itself. Hence, we obtain that for every initial condition $u_0 \in L^{p}(\T^2,\R^2)$ with $\int_{\T^2}u_0(x)dx=0$ (meaning $u_0 \in \BB_0$) and for almost every realization $\mathbf{B}^H(\omega )$ of the fractional Brownian Motion, there is a unique maximal $\tau(\omega ) $ and $u(\omega )$, solution of \eqref{NS} on $[0,\tau (\omega )).$
	
	For related investigations on rough Navier-Stokes equations, we also refer to \cite{hofmanova2019navier,hofmanova2020vorticity}, where a slightly different model is investigated based on a priori estimates.
	
	\subsection{Cahn-Hilliard equation}\label{subsec:CH}
	Let again $H>1/3,$ fix a dimension $n\geq 1,$ a let $\OO \subset \R^n$ be a bounded domain with smooth boundary.
	We consider the semilinear equation
	\begin{align}\label{CH}
		& du_t(x) = -\Delta\big(\Delta u_t(x)  - u^3_t(x)+u_t(x)\big) dt + \sum_{i = 1}^d f_i(x) \cdot \nabla u_t(x)dB^{H,i}_t \;,
		\\
		& u_t(x)=\Delta u_t(x)=0\quad \text{on}\quad [0,T]\times \partial\OO\,,
	\end{align}
	where $f_i \in \CC^\infty_c(\OO;\R^n)$ and the pair $(u_0,\Delta u_0)$ lies in $H^{k,p}_0(\OO)\times H^{k-2,p}_0(\OO)$ 
	and with $(k,p) \in [0,\infty)\times(1,\infty)$ chosen so that $k>\frac np \vee (2+\frac1p)$ (the second condition guarantees that the trace at $\partial\OO$ of $\Delta u_0$ is meaningful).
Because of the mixed boundary conditions, it is more convenient in this case to work within a scale constructed from the fractional powers of the bi-Laplacian.
For that purpose, we introduce the following operator on $L^p(\OO)$
\begin{equation}\label{bilaplacian_def}
(L,D(L)) = (-\Delta^2,\{u\in H^{4,p}(\OO)\colon u=\Delta u=0\text{ on }\partial\OO\})\,.
\end{equation}
Next, we introduce the scale of Banach spaces
\begin{equation}\label{scale_bilaplacian}
(\BB_\beta, |\cdot|_\beta):=\big(D(-L)^{\frac k4+\beta},|(-L)^{\frac k4+\beta}\cdot|\big)\,,\quad \text{if}\enskip \beta\geq 0\,,
\end{equation}
and if $\beta<0$ one proceeds by completion as in Example \ref{exa:hilbert}.
With the definition \eqref{bilaplacian_def}, the bi-Laplacian coincides with the square $(i\Delta_D)^2$ where $\Delta_D$ is the Dirichlet Laplacian, namely
\begin{equation*}
	(\Delta_D,D(\Delta_D))=
	\left (\Delta,H^{2,p}(\OO)\cap H^{1,p}_0(\OO)\right )\,.
\end{equation*}
This implies that, $(L,D(L))$ satisfies the hypotheses of \cite[Corollary 3.7.15]{arendt2011vector}, and we infer that for any $\beta\in\R$ it satisfies Assumption \ref{ass:L_t} with $(\mathcal X_0,\mathcal X_1)=(\BB_{\beta},\BB_{\beta-1})$.
Hence, the operator $L$ generates a semigroup on the full range $(\BB_{\beta})_{\beta\in \R}$, and moreover it is easily checked that the non-linearity $N(u) := - \Delta(-u^3+u)$ sends $\BB_0$ to $\BB_{-1/2}$ (this can be seen as a consequence of the fact that $\BB_{0}$ forms an algebra for $kp > n$, which is inherited from the same property for the Bessel potential space $H^{k,p}(\OO)$). 
	
Concerning the noise term, $B^H(\omega )$ is again lifted to a rough path as in the previous paragraph, and similarly as before $F_i(u):=f_i(x) \cdot \nabla u$ sends $\BB_\beta$ to $\BB_{\beta-1/4}$, for any $\beta \in \R.$ This means in particular that the assumptions of Theorem \ref{thm:main} are satisfied and hence we conclude existence and uniqueness of local solutions, for each $u_0\in \BB_0$.
	
	\subsection{Parabolic equations with transport noise}
	
	As in Example \ref{ex:family1}, assume that we are given a second order operator $L_t = \nabla\cdot (a(t) \nabla )$ satisfying the uniform ellipticity condition \eqref{uniform_ellipticity} and assume in addition that $a(t)$ is symmetric for all $t$. Recall that $L_t$ satisfies Assumption \ref{ass:L_t} with the scale of spaces defined as $\BB_\beta :=H^{2\beta}(\R^n).$ Consider the parabolic equation with transport noise
	\begin{equation} \label{transportEq}
		du_t(x) = L_t u_t(x) dt + \sum_{i=1}^d f_i(x) \cdot \nabla u_t(x) d\X_t^i\;,  \quad u_0 \in L^2(\R^n)\;,
	\end{equation}
	where $X = (X^1, \dots, X^d) \in \mathscr{C}^{\gamma}([0,T];\R^d)$ is a \emph{geometric} rough path with H\"{o}lder exponent $\gamma \in (\frac13, \frac12]$, and $f = (f_1, \dots, f_d)$ is a collection of smooth vector fields on $\R^d$. 
	
	Notice that this equation is `supercritical' and does not fall into the framework of Theorem \ref{thm:main}. 
	Indeed, note that  $F(u) = ( f_1 \cdot \nabla u, \dots, f_d \cdot  \nabla u)$ sends $\BB_{\beta} $ into 
	$\BB^d_{\beta  - \frac12}$, thus violating the subcriticality requirement since $\gamma < \frac12$. 
	Nevertheless, this type of rough partial differential equation with transport rough input has been investigated in the literature \cite{BG,DGHT,HH,HN} (see also \cite{H} for a similar quasilinear ansatz), relying on a priori 
	estimates and techniques from the theory of transport equations as introduced by DiPerna and Lions in 
	\cite{DiPernaLions}. The approach uses the variational formulation instead of the mild formulation of the 
	equation, i.e.\
	$$
	\langle\delta u_{t,s} , \phi\rangle = \int_s^t \langle u_r,L_r \phi\rangle dr - \sum_{i=1}^d \int_s^t \langle u_r, \Div( f_i \phi)\rangle  d\X^i_r\;,
	$$
	for $\phi \in H^3(\R^n)$, meaning that the rough integral $\int_0^{\cdot} F(u_r) \cdot d\X_r$ has to be understood as a $H^{-3}(\R^n)-$valued path. Above and below $\langle \cdot, \cdot\rangle$ denotes dual pairing of $H^{-k}(\R^n)$ and $H^{k}(\R^n)$
	
	We now show that, provided we have a variational solution to \eqref{transportEq}, this equation may be equivalently formulated using the propagators.

	\begin{lem} \label{lem:transportMildFormulation}
	Let $u$ be the solution to \eqref{transportEq} as described in \cite[Theorem 1]{HH}. Then $u$ allows for the following representation in $H^{-3}(\R^n)$
		\begin{equation} \label{transportMildFormulation}
			\ddh u_{t,s} = \sum_{i=1}^d \int_s^t S_{t,r} (f_i \cdot \nabla u_r) d\X_r^i\;,
		\end{equation}
		where $S$ denotes the propagator associated to $(L_t)_{t \in [0,T]}$.
	\end{lem}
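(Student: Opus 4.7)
The plan is to use duality and the adjoint propagator to translate the variational formulation into the desired mild formulation, following the standard strategy for linear non-autonomous equations but now in a rough-integral context. Fix a test function $\phi\in H^3(\R^n)$ and $(t,s)\in\Delta_2$. Since $\langle S_{t,s}u_s,\phi\rangle=\langle u_s,S^{*}_{t,s}\phi\rangle$, showing \eqref{transportMildFormulation} is equivalent to the scalar identity
\begin{equation*}
\langle u_t,\phi\rangle-\langle u_s,S^{*}_{t,s}\phi\rangle
= \sum_{i=1}^d\int_s^t \langle S_{t,r}(f_i\cdot\nabla u_r),\phi\rangle\, d\X^i_r
= \sum_{i=1}^d\int_s^t \langle f_i\cdot\nabla u_r,S^{*}_{t,r}\phi\rangle\, d\X^i_r,
\end{equation*}
so the natural quantity to look at is $g(r):=\langle u_r,S^{*}_{t,r}\phi\rangle$ for $r\in[s,t]$, and we must compute $g(t)-g(r)$.

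First I would record the ingredients. Symmetry of $a(t)$ gives $L_t^{*}=L_t$, and by duality from \ref{P3*} the map $r\mapsto S^{*}_{t,r}\phi$ lies in $\CC^1([s,t];H^1(\R^n))\cap \CC([s,t];H^3(\R^n))$, with $\partial_r S^{*}_{t,r}\phi=-L_r S^{*}_{t,r}\phi$. Integration by parts turns the variational formula of \cite{HH} into
\begin{equation*}
\langle \delta u_{t,s},\psi\rangle = \int_s^t \langle u_r, L_r\psi\rangle\, dr + \sum_{i=1}^d \int_s^t \langle f_i\cdot\nabla u_r, \psi\rangle\, d\X^i_r,
\end{equation*}
for every $\psi\in H^3(\R^n)$. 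At the same time, Theorem \ref{integration} applied in a suitable Bessel scale (using that $(u,f\cdot\nabla u)$ is controlled by $\X$ at the level of $L^2$ with Gubinelli derivative in $H^{-1}$, and that $S_{t,r}$ sends $H^{-1}$ into any space paired with $H^3$) makes the right-hand side of \eqref{transportMildFormulation} a well-defined element of $H^{-3}$.

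The core step is then a discretization argument. Pick a partition $\pi=\{s=r_0<\dots<r_n=t\}$ and write $g(t)-g(s)$ as a telescoping sum. On each subinterval I would insert $S^{*}_{t,r_{k+1}}\phi$ as the variational test function and decompose
\begin{equation*}
\langle u_{r_{k+1}},S^{*}_{t,r_{k+1}}\phi\rangle-\langle u_{r_k},S^{*}_{t,r_k}\phi\rangle
= \langle \delta u_{r_{k+1},r_k}, S^{*}_{t,r_{k+1}}\phi\rangle + \langle u_{r_k}, S^{*}_{t,r_{k+1}}\phi-S^{*}_{t,r_k}\phi\rangle.
\end{equation*}
Substituting the variational formula in the first summand yields a drift contribution $\int_{r_k}^{r_{k+1}}\langle u_r,L_r S^{*}_{t,r_{k+1}}\phi\rangle dr$ and a noise contribution $\sum_i\int_{r_k}^{r_{k+1}}\langle f_i\cdot\nabla u_r,S^{*}_{t,r_{k+1}}\phi\rangle d\X^i_r$; the second summand equals $-\int_{r_k}^{r_{k+1}}\langle u_{r_k}, L_r S^{*}_{t,r}\phi\rangle dr$ by the backward equation for $S^{*}_{t,r}$. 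Because $r\mapsto u_r$ and $r\mapsto L_r S^{*}_{t,r}\phi$ are both continuous in the appropriate topology, the two drift terms telescope and cancel as $|\pi|\to 0$, while the noise Riemann-sum converges, by uniform continuity of $r\mapsto S^{*}_{t,r}\phi$ and a compensated-sum/Sewing argument, to $\sum_i\int_s^t\langle f_i\cdot\nabla u_r,S^{*}_{t,r}\phi\rangle d\X^i_r$.

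The main obstacle is the passage to the limit in this rough Riemann sum, since both the integrand $f_i\cdot\nabla u_r$ and the effective test function $S^{*}_{t,r_{k+1}}\phi$ depend on the partition point. I plan to handle it by fitting the sum into the affine-sewing framework of Theorem~\ref{thm:sewing}: define $\xi_{v,u}:=\langle f\cdot\nabla u_u,S^{*}_{t,v}\phi\rangle\cdot\delta X_{v,u}+\langle (f\otimes Df\cdot\nabla)u_u,S^{*}_{t,v}\phi\rangle:\XX_{v,u}$ (that is, add the canonical compensator built from the Gubinelli derivative of $f\cdot\nabla u$), verify that $\xi$ lies in an appropriate $\ZZ^{\gamma}_\alpha$ thanks to the time regularity of $r\mapsto S^{*}_{t,r}\phi$ and Chen's relation, and identify its sewing integral with the right-hand side of \eqref{transportMildFormulation} tested against $\phi$. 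Once this identification is done, the limit in the telescoping sum yields exactly $g(t)-g(s)=\sum_i\int_s^t\langle S_{t,r}(f_i\cdot\nabla u_r),\phi\rangle d\X^i_r$, and since $\phi\in H^3(\R^n)$ is arbitrary the equality \eqref{transportMildFormulation} holds in $H^{-3}(\R^n)$.
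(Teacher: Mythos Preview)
Your strategy is sound and ultimately equivalent to the paper's, but the paper reaches the goal with considerably less machinery. Like you, the paper fixes $t$, sets $f_s:=S_{t,s}\phi$ (exploiting self-adjointness so $S^*_{t,s}=S_{t,s}$), and studies $\langle u_r,f_r\rangle$. The difference is that the paper does \emph{not} discretize: it simply expands the two-parameter increment
\[
\delta\langle u,f\rangle_{t,s}=\langle\delta u_{t,s},f_s\rangle+\langle u_t,\delta f_{t,s}\rangle,
\]
substitutes the unbounded-rough-driver formulation $\langle\delta u_{t,s},f_s\rangle=\int_s^t\langle u_r,L_r f_s\rangle dr+\langle u_s,(A^{1,*}_{t,s}+A^{2,*}_{t,s})f_s\rangle+\langle u^{\natural}_{t,s},f_s\rangle$, and gathers the leftover into a single remainder $u(f)^{\sharp}_{t,s}$. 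The drift pieces $\int_s^t\langle u_r,L_rf_s\rangle dr$ and $-\int_s^t\langle u_t,L_rf_r\rangle dr$ do not cancel exactly; instead the paper estimates their difference in $H^{-3}$ using the H\"older regularity of $u$ and the smoothing of $S$, and combines it with the $|t-s|^{\lambda}$ bound on $u^{\natural}$. Having shown $|u(f)^{\sharp}_{t,s}|\lesssim|t-s|^{\lambda}|\phi|_{3/2}$ with $\lambda>1$, the paper observes that $\ddh u_{t,s}$ and $S_{t,s}(A^1_{t,s}u_s+A^2_{t,s}u_s)$ differ by this small remainder, and concludes by the \emph{uniqueness} part of the affine Sewing Lemma that this must equal the rough convolution $\sum_i\int_s^t S_{t,r}(f_i\cdot\nabla u_r)\,d\X^i_r$.

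So where you introduce a partition, telescope, and then set up a second sewing problem for a germ $\xi_{v,u}$ that itself depends on $t$, the paper stays at the level of single increments and lets sewing uniqueness do the work. Your route is correct but duplicates effort: the partition limit you describe is precisely what the sewing lemma already encodes, and the drift cancellation you anticipate ``as $|\pi|\to0$'' is exactly the $O(|t-s|^{\lambda})$ estimate the paper proves directly. If you keep your argument, the one place to be careful is that the variational formulation from \cite{HH} comes with the germ $A^{1,*},A^{2,*}$ and a remainder $u^{\natural}$, not the rough integral $\int\langle f_i\cdot\nabla u_r,\psi\rangle\,d\X^i_r$ as a primitive object; you must carry $u^{\natural}$ through the telescoping and bound its contribution, which is what the paper does in one line via $|\langle u^{\natural}_{t,s},f_s\rangle|\lesssim|t-s|^{\lambda}|\phi|_{3/2}$.
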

	
	\begin{proof}
		As in \cite{BG}, we introduce the \emph{unbounded rough drivers}:
		$$
		A_{t,s}^1 \phi = \sum_{i=1}^d \, f_i \cdot \nabla \phi  X_{t,s}^i \qquad A_{t,s}^2 \phi = \sum_{i,j=1}^d  \, f_i \cdot \nabla ( f_j \cdot \nabla \phi) \XX_{t,s}^{j,i},
		$$
		for every $(s,t)\in\Delta _2.$
		Recall that the solution to \eqref{transportEq} is defined as the path $u \colon [0,T] \rightarrow L^2(\R^n)$ such that the linear functional $u^{\natural}_{t,s}$ defined for every $\phi \in H^3(\R^n)$ and $(s,t)\in\Delta _2$ by
		$$
		\langle u_{t,s}^{\natural},\phi\rangle := \langle \delta u_{t,s}, \phi\rangle - \int_s^t\langle u_r, L_r \phi\rangle dr -\langle u_s , [A_{t,s}^{1,*} + A_{t,s}^{2,*}] \phi\rangle\,,
		$$
		satisfies the estimate $|u_{t,s}^{\natural}|_{-3/2} \lesssim |t-s|^{\lambda }$ for some $\lambda > 1$ (we recall that $|\cdot|_{\alpha}:=|\cdot |_{H^{2 \alpha}(\R^n)}$).
	 
		Notice that since $a(t)$ is symmetric, then $L_t$ is self-adjoint and so is the propagator $S_{t,s}$. 
		Fix $t > 0$, $\phi \in H^3(\R^n)$ and define $f_s = S_{t,s}\phi$ so that $\delta f_{t,s} = - \int_s^t L_r f_r dr$.
		Straightforward computations give 
		\begin{align}
			\dd  \langle u,f\rangle_{t,s} & = \langle\dd u_{t,s}, f_s\rangle  + \langle u_t, \dd f_{t,s}\rangle \notag \\
			& = \int_s^t\langle u_r, L_r f_s\rangle dr  +\langle u_s, A_{t,s}^{1,*} f_s + A_{t,s}^{2,*} f_s\rangle  + \langle u_{t,s}^{\natural}, f_s\rangle - \int_s^t \langle u_t, L_r f_r\rangle dr \notag \\
			& = \langle u_s, A_{t,s}^{1,*} f_s+ A_{t,s}^{2,*} f_s\rangle  + u(f)^{\sharp}_{t,s}\,, \label{URD-Mild}
		\end{align}
		where we have defined 
		$$
		u(f)^{\sharp}_{t,s} = \int_s^t \big( \langle u_r, L_r f_s\rangle  - \langle u_t, L_r f_r\rangle \big)dr + \langle u_{t,s}^{\natural}, f_s\rangle.
		$$
		For the second term above we use the definition of $u^{\natural}$ and the continuity of $S_{t,s}$ on $H^3(\R^n)$ to get
		$$
		|\langle u_{t,s}^{\natural}, f_s\rangle| \lesssim |t-s|^{\lambda} |S_{t,s} \phi|_{3/2} \lesssim |t-s|^{\lambda} |\phi|_{3/2}. 
		$$
		For the first term, we write
		\begin{align*}
			\Big|\int_s^t \big(\langle u_r, L_r f_s\rangle  -& \langle u_t, L_r f_r\rangle \big)dr  \Big|  \leq  \int_s^t  \left| \langle\dd u_{t,r}, L_r f_r\rangle\right| dr   +  \int_s^t \left|\langle u_r, L_r \dd f_{r,s}\rangle \right| dr \\
			&  \leq |t-s|^{\lambda -1} |u|_{\lambda -1, -1/2}  \int_s^t  |  L_r f_r |_{1/2} dr + |u|_{0,0} \int_s^t \left|  L_r \dd f_{r,s} \right|_0 dr \\
			&  \lesssim |t-s|^{\lambda } |u|_{\lambda -1, -1/2} |\phi|_{3/2}  + |u|_{0,0} |t-s|^{3/2} |\phi|_{3/2}, 
		\end{align*}
		where we have used 
		$$
		|  L_r f_r |_{1/2} \lesssim |\phi|_{3/2} \quad \textrm{ and } \quad |\dd f_{r,s}|_1 = | S_{t,r} (\id - S_{r,s}) \phi |_{1} \lesssim |r-s|^{1/2} |\phi|_{3/2} .
		$$
		Now note that $f_t = \phi$, thus $\dd  \langle u,f\rangle_{t,s}  = \langle u_t, \phi \rangle - \langle u_s, S_{t,s} \phi\rangle = \langle\ddh u_{t,s}, \phi\rangle$.
		Using the fact that $S$ is self-adjoint and rewriting \eqref{URD-Mild} we get the following equality on $H^{-3}(\R^n)$ 
		\begin{align*}
			\ddh u_{t,s}  & = S_{t,s} A_{t,s}^1 u_s + S_{t,s} A_{t,s}^2 u_s  + u^{S,\natural}_{t,s}\,,
		\end{align*}
		where $u^{S,\natural}_{t,s}$ is defined on $H^3(\R^n)$ as
		$$
		u^{S,\natural}_{t,s}(\phi) = \int_s^t \big(\langle u_r, L_r S_{t,s} \phi\rangle  -\langle u_t, L_r S_{t,r} \phi \rangle\big)dr + \langle u_{t,s}^{\natural}, S_{t,s} \phi\rangle .
		$$
		By the uniqueness part of Theorem \ref{thm:sewing} this is exactly \eqref{transportMildFormulation} as constructed in Theorem~\ref{integration}.
	\end{proof}

	\bibliographystyle{plain}
	

\end{document}